\theoremstyle{plain}
\newtheorem{theorem}{Theorem}
\newtheorem{proposition}{Proposition}[section]
\newtheorem{lemma}{Lemma}[section]
\newtheorem{corollary}{Corollary}[section]
\theoremstyle{definition}
\newtheorem{definition}{Definition}
\newtheorem{hypothesis}{Hypothesis}
\newtheorem*{Not}{Notation}
\theoremstyle{remark}
\newtheorem{remark}{Remark}[section]
\newtheorem{example}[remark]{Example}
\newcommand{\N}{\mathbb{N}}
\newcommand{\R}{\mathbb{R}}
\newcommand{\xii}{{|\xi|}}
\newcommand{\lloc}{{\mathrm{loc}}}
\renewcommand{\div}{{\mathrm{\,div\,}}}
\newcommand{\curl}{{\mathrm{\,curl\,}}}
\def\<#1\>{\left\langle#1\right\rangle }
\title[Hyperbolic equations with dissipative lower order terms]
{Asymptotics of higher order hyperbolic equations \\ with one or two dissipative lower order terms}
\author{Marcello D'Abbicco}
\address{Marcello D'Abbicco, Dept. of Mathematics, University of Bari, Via E. Orabona 4 - 70125 BARI - ITALY}
\begin{document}

\begin{abstract}
In this paper, we consider the Cauchy problem for a hyperbolic equation~$Q(\partial_t,\partial_x)u=0$ of any order $m\geq3$, where~$t\geq0$ and~$x\in\R^n$, and $Q=P_m+P_{m-1}+P_{m-2}$ is a sum of homogeneous hyperbolic polynomials $P_{m-j}$ of order $m-j$. We assume the sufficient and necessary condition which guarantees the strict stability of the polynomial~$Q(\lambda,i\xi)$, for any~$\xi\neq0$. Under this assumption, we derive a polynomial decay rate for the energy of the problem, in different scenarios of interlacing of the polynomials $P_{m-j}(\lambda,\xi)$, and we describe the asymptotic profile of the solution as $t\to\infty$, assuming a moment condition on the initial data. In order to do this, we study the asymptotic behavior of the $m$ roots of the full symbol $Q(\lambda,i\xi)=0$, as~$\xi\to0$ and as $\xii\to\infty$. Examples of models to which the results may be applied include the theory of acoustic waves and the theory of electromagnetic elastic waves. Also, as an application, we prove the existence of global small data solutions to the problem with supercritical power nonlinearities of type $|D^\alpha u|^p$, with $|\alpha|\leq m-2$.
\end{abstract}

\keywords{hyperbolic polynomials, higher order dissipative equations, asymptotic profile, acoustic waves, elastic waves, critical exponent}

\subjclass[2020]{35L25, 35L30, 35B40, 35L76, 35B33}

\maketitle

\section{Introduction}\label{sec:intro}

\baselineskip13pt

In this paper, we consider the forward Cauchy problem
\begin{equation}\label{eq:CPlin}
\begin{cases}
Q(\partial_t,\partial_x) u =0, \quad t\geq 0, x\in\R^n,\\
\partial_t^j u(0,x)=u_j(x),\qquad j=0,\ldots,m-1,\\
\end{cases}
\end{equation}
where $Q(\partial_t,\partial_x)$ is a hyperbolic operator of order~$m$, obtained by the sum of two or three homogeneous hyperbolic operators of order~$m$ and~$m-1$ or, respectively, $m$, $m-1$, $m-2$. We choose lower order polynomials which induce a partial dissipation of the energy for~\eqref{eq:CPlin}, that is, the roots of the full symbol $Q(\lambda,i\xi)$ have negative real part for any~$\xi\neq0$, but some roots vanish as $\xi\to0$. Under this assumption, we describe the asymptotic behavior of those roots as~$\xi\to0$ and as~$\xii\to\infty$. In particular, we study the vanishing speed of the real parts of those roots as $\xi\to0$ (and, in some cases, as $\xii\to\infty$). With this information, we obtain the decay rate for energy estimates for the solution to~\eqref{eq:CPlin} (Theorems~\ref{thm:CPQ2} and~\ref{thm:verygen}) and we describe the asymptotic profile of the solution, assuming a moment condition on the initial data (Theorem~\ref{thm:asymp}).
\begin{definition}\label{def:hyperbolic}
A complex polynomial $p(z)$ is hyperbolic if its roots are real-valued. It is strictly hyperbolic if its roots are real and simple. A complex polynomial $q(z)$ is strictly stable if $\Re z<0$ for any root of $q(z)$.
\end{definition}
\begin{definition}\label{def:stable}
A complex polynomial $q(z)$ is strictly stable if $\Re z<0$ for any root of $q(z)$.
\end{definition}
\begin{definition}\label{def:interlace}[see Definition 6.3.1 in~\cite{R-S}]
Let $p_{m-1}(z)$ be a hyperbolic polynomial of degree~$m-1$ with roots $\tilde\lambda_1\leq\ldots\leq\tilde\lambda_{m-1}$, and let $p_m(z)$ be a hyperbolic polynomial of degree~$m$ with roots $\lambda_1\leq\ldots\leq\lambda_m$.

We say that $p_{m-1}(z)$ and $p_m(z)$ interlace if
\begin{equation}\label{eq:weakinterlace}
\lambda_1 \leq \tilde\lambda_1 \leq \lambda_2 \leq \tilde\lambda_2 \leq \ldots \leq \tilde\lambda_{m-1}\leq \lambda_m.
\end{equation}
We say that $p_{m-1}(z)$ and $p_m(z)$ strictly interlace if
\begin{equation}\label{eq:interlace}
\lambda_1 < \tilde\lambda_1 < \lambda_2 < \tilde\lambda_2 < \ldots < \tilde\lambda_{m-1}< \lambda_{m}.
\end{equation}
\end{definition}
We stress that if two polynomials strictly interlace, then they are both strictly hyperbolic.
\begin{definition}\label{def:hyp}
Let
\begin{equation}\label{eq:P}
P_m(\partial_t,\partial_x)=\sum_{k+|\alpha|=m} c_{k,\alpha} \partial_t^k\partial_x^\alpha,
\end{equation}
be a homogeneous operator of order~$m$, with~$c_{m,0}>0$. We say that~$P(\partial_t,\partial_x)$ is hyperbolic if its symbol
\[ P(\lambda,\xi)=\sum_{k+|\alpha|=m} c_{k,\alpha} \lambda^k\xi^\alpha, \]
is hyperbolic, that is, it admits only real-valued roots~$\lambda_j(\xi')$, for any given~$\xi'\in S^{n-1}=\{\xi\in\R^n: \ \xii=1\}$. We say that~$P(\partial_t,\partial_x)$ is strictly hyperbolic if $P(\lambda,\xi')$ is strictly hyperbolic, that is, the roots~$\lambda_j(\xi')$ are distinct for any given~$\xi'\in S^{n-1}$.
\end{definition}
We choose $Q(\partial_t,\partial_x)=Q_1(\partial_t,\partial_x)$ or $Q(\partial_t,\partial_x)=Q_2(\partial_t,\partial_x)$, in~\eqref{eq:CPlin}, where
\begin{align}
\label{eq:Q1}
Q_1(\partial_t,\partial_x)
     & = P_m(\partial_t,\partial_x)+P_{m-1}(\partial_t,\partial_x),\\
\label{eq:Q2}
Q_2(\partial_t,\partial_x) & = Q_1(\partial_t,\partial_x)+P_{m-2}(\partial_t,\partial_x),
\end{align}
with $P_{m-j}(\partial_t,\partial_x)$ hyperbolic operators of order~$m-j$. Without loss of generality, we assume in the following that~$c_{m,0}=1$ and $c_{m-1,0}>0$; moreover, $c_{m-2,0}>0$ if we consider~$Q_2$.


By the Hermite--Biehler theorem  (see, for instance, \cite[Theorem 6.3.4]{R-S}), for any fixed~$\xi\neq0$, the strict stability of $Q_1(\lambda,i\xi)$ or $Q_2(\lambda,i\xi)$ is equivalent to the strict interlacing of $P_{m-1}(\lambda,\xi)$ and $P_m(\lambda,\xi)$, or, respectively, $P_m(\lambda,\xi)-P_{m-2}(\lambda,\xi)$. It is natural to write this condition for $Q_1(\lambda,i\xi)$ with $\xi\neq0$, by means of a condition on the roots of $P_{m-1}(\lambda,\xi')$ and~$P_m(\lambda,\xi')$, with $\xi'\in S^{n-1}$.
\begin{lemma}\label{lem:stable1}(see~\cite{VD}, see also~\cite[Proposition 2.2]{VV})
The polynomial $Q_1(\lambda,i\xi)$ is strictly stable for any $\xi\neq0$ if, and only if, $P_m(\partial_t,\partial_x)$ and $P_{m-1}(\partial_t,\partial_x)$ are strictly hyperbolic operators, and $P_{m-1}(\lambda,\xi')$ and~$P_m(\lambda,\xi')$ strictly interlace, as in~\eqref{eq:interlace}, for any~$\xi'\in S^{n-1}$.
\end{lemma}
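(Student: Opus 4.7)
The plan is to recognise the statement as a direct translation of the Hermite--Biehler theorem, which characterises strict stability of a polynomial expressed as a sum of a real and an imaginary part via the interlacing of the zeros of its real and imaginary parts.

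First, I fix $\xi\neq 0$ and substitute $\lambda=i\mu$ in $Q_1(\lambda,i\xi)$. Because $P_m$ and $P_{m-1}$ have real coefficients and are homogeneous of respective degrees $m$ and $m-1$ in $(\lambda,\xi)$, a direct expansion yields $P_{m-j}(i\mu,i\xi)=i^{m-j}P_{m-j}(\mu,\xi)$, hence
\[
Q_1(i\mu,i\xi) = i^{m-1}\bigl(P_{m-1}(\mu,\xi)+iP_m(\mu,\xi)\bigr).
\]
The condition $\Re\lambda<0$ for every root of $Q_1(\cdot,i\xi)$ is thus equivalent to $\Im\mu>0$ for every root of $h(\mu):=P_{m-1}(\mu,\xi)+iP_m(\mu,\xi)$, transforming the question into a Hurwitz-type stability for $h$ with respect to the upper half-plane.

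Next, I invoke the Hermite--Biehler theorem (in the form \cite[Theorem 6.3.4]{R-S}): all roots of $h(\mu)$ lie in the open upper half-plane if and only if the real polynomials $P_m(\cdot,\xi)$ and $P_{m-1}(\cdot,\xi)$ are both strictly hyperbolic (real, simple zeros), their zeros strictly interlace, and the Wronskian $P_{m-1}P_m'-P_mP_{m-1}'$ carries the correct sign on $\R$. This last condition is forced by the positivity of the leading coefficients $c_{m,0}=1$ and $c_{m-1,0}$, as a check at $\mu\to+\infty$ reveals, and it also fixes the direction of the interlacing, giving~\eqref{eq:interlace} rather than its reverse. Finally, by homogeneity $P_{m-j}(\mu,\xi)=|\xi|^{m-j}P_{m-j}(\mu/|\xi|,\xi')$ for $\xi=|\xi|\xi'$, so the roots of $P_{m-j}(\cdot,\xi)$ are the $|\xi|$-scalings of the roots of $P_{m-j}(\cdot,\xi')$. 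Since $|\xi|>0$, strict interlacing at every $\xi\neq 0$ is equivalent to strict interlacing at every $\xi'\in S^{n-1}$, which finishes the argument.

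The main obstacle I anticipate is the bookkeeping of the Wronskian sign condition in Hermite--Biehler and verifying that the normalisations $c_{m,0}=1$ and $c_{m-1,0}>0$ are precisely those required to translate strict stability of $Q_1(\lambda,i\xi)$ into the prescribed interlacing pattern~\eqref{eq:interlace} and not its reverse. Once this sign bookkeeping is settled, each step becomes a routine consequence of the cited theorem combined with the scaling afforded by the homogeneity of $P_m$ and $P_{m-1}$.
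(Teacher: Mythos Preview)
The paper does not supply its own proof of this lemma: it is quoted from the literature (references~\cite{VD} and~\cite[Proposition 2.2]{VV}), and the only indication the paper gives of the argument is the sentence just before the lemma stating that, by the Hermite--Biehler theorem \cite[Theorem 6.3.4]{R-S}, strict stability of $Q_1(\lambda,i\xi)$ for fixed $\xi\neq0$ is equivalent to the strict interlacing of $P_{m-1}(\lambda,\xi)$ and $P_m(\lambda,\xi)$. Your proposal is precisely a fleshed-out version of that indication---the substitution $\lambda=i\mu$ together with homogeneity to isolate the real and imaginary parts, application of Hermite--Biehler, and the scaling reduction from $\xi\neq0$ to $\xi'\in S^{n-1}$---and it is correct.
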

%
For $Q=Q_2$, the situation is more complicated.
\begin{hypothesis}\label{hyp:Q2}
We make the following assumption:
\begin{itemize}
\item $P_m(\partial_t,\partial_x)$ and~$P_{m-2}(\partial_t,\partial_x)$ are hyperbolic;
\item $P_{m-1}(\partial_t,\partial_x)$ is strictly hyperbolic;
\item $P_{m-1}(\lambda,\xi')$ and~$P_m(\lambda,\xi')$ interlace, as in~\eqref{eq:weakinterlace}, for any~$\xi'\in S^{n-1}$;
\item $P_{m-2}(\lambda,\xi')$ and~$P_{m-1}(\lambda,\xi')$ interlace, as in~\eqref{eq:weakinterlace}, for any~$\xi'\in S^{n-1}$;
\item there is no $(\lambda,\xi')\in\R\times S^{n-1}$ such that $P_m(\lambda,\xi')=P_{m-1}(\lambda,\xi')=P_{m-2}(\lambda,\xi')=0$.
\end{itemize}
\end{hypothesis}
\begin{lemma}\label{lem:stable2}(\cite[Theorem 2.1]{VV})
The polynomial $Q_2(\lambda,i\xi)$ is strictly stable if, and only if, Hypothesis~\ref{hyp:Q2} holds.
\end{lemma}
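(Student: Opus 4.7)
I would follow the strategy of Lemma~\ref{lem:stable1}: reduce to Hermite--Biehler. By the homogeneity of the $P_{m-j}$, the substitution $\lambda=i\mu$ rewrites
\[
Q_2(i\mu,i\xi) = i^{m-2}\bigl[P_{m-2}(\mu,\xi) - P_m(\mu,\xi) + i\,P_{m-1}(\mu,\xi)\bigr],
\]
for every $\xi\neq 0$. Since $\lambda\mapsto i\mu$ maps $\{\Re\lambda<0\}$ onto $\{\Im\mu>0\}$, strict stability of $Q_2(\lambda,i\xi)$ becomes equivalent to all zeros of the complex polynomial
\[
F_\xi(\mu) := (P_m-P_{m-2})(\mu,\xi) - i\,P_{m-1}(\mu,\xi)
\]
lying in the open upper half-plane. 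By \cite[Theorem~6.3.4]{R-S}, this in turn amounts to requiring that $P_{m-1}(\cdot,\xi')$ and $(P_m-P_{m-2})(\cdot,\xi')$ both be strictly hyperbolic with simple, strictly interlacing roots, for every $\xi'\in S^{n-1}$; the Wronskian sign condition in Hermite--Biehler is automatic from the positive leading coefficients of $P_m$ and $P_{m-1}$.

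The main task is then to show the equivalence of this strict interlacing with Hypothesis~\ref{hyp:Q2}. Fix $\xi'\in S^{n-1}$ and denote the roots of $P_{m-1}(\cdot,\xi')$ by $\tilde\lambda_1<\cdots<\tilde\lambda_{m-1}$. For the direction $(\Leftarrow)$, I would use the weak interlacings $\lambda_j\le\tilde\lambda_j\le\lambda_{j+1}$ and $\tilde\lambda_j\le\hat\lambda_j\le\tilde\lambda_{j+1}$ to count negative factors in the product forms, obtaining
\[
\sign P_m(\tilde\lambda_j,\xi')=(-1)^{m-j}, \qquad \sign P_{m-2}(\tilde\lambda_j,\xi')=-(-1)^{m-j},
\]
with zero allowed only when $\tilde\lambda_j$ is also a root of the corresponding polynomial. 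The no-triple-common-zero assumption rules out simultaneous vanishing, so $\sign(P_m-P_{m-2})(\tilde\lambda_j,\xi')=(-1)^{m-j}$ is nonzero and alternates in $j$. Combined with the sign of $P_m-P_{m-2}$ at $\pm\infty$ (leading coefficient $1$), this produces exactly $m$ real simple roots strictly interlacing the $\tilde\lambda_j$.

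The converse $(\Rightarrow)$ is the subtle direction, since from the strict interlacing of $(P_m-P_{m-2},P_{m-1})$ alone one must recover each of the five bullets of Hypothesis~\ref{hyp:Q2}. Hyperbolicity of $P_m$ and $P_{m-2}$ individually would come from a rescaling argument: writing $\xi=r\xi'$ and $\lambda=r\nu$, homogeneity shows that strict stability of $Q_2(\lambda,ir\xi')$ is equivalent, for every $r>0$, to strict stability in $\nu$ of
\[
P_m(\nu,i\xi') + r^{-1}P_{m-1}(\nu,i\xi') + r^{-2}P_{m-2}(\nu,i\xi').
\]
Letting $r\to+\infty$ and invoking continuity of roots forces the roots of $P_m(\cdot,i\xi')$ to have non-positive real part; combined with the symmetry $\lambda\mapsto-\bar\lambda$ of the root set (a consequence of the real homogeneity of $P_m$), those roots must in fact be purely imaginary, so $P_m$ is hyperbolic. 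A dual zoom as $r\to 0^+$, after multiplying through by $r^2$ and tracking the $m-2$ bounded roots, yields hyperbolicity of $P_{m-2}$. The two weak interlacings and the no-triple-zero statement then follow by running the sign count of $(\Leftarrow)$ in reverse.

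\textbf{Main obstacle.} I expect the technical core to lie in this converse direction: strict interlacing of a difference $P_m-P_{m-2}$ with $P_{m-1}$ tells one nothing, in isolation, about $P_m$ and $P_{m-2}$ separately. It is precisely the homogeneity of $Q_2$ and the resulting scale-invariant family of stable polynomials that breaks this ambiguity, and this is the substantive content of \cite[Theorem~2.1]{VV}.
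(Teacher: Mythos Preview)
The paper does not give its own proof of this lemma; it is quoted from \cite[Theorem~2.1]{VV}. What the paper does do, in the paragraph preceding Lemmas~\ref{lem:stable1} and~\ref{lem:stable2}, is exactly your first step: the Hermite--Biehler reduction showing that strict stability of $Q_2(\lambda,i\xi)$ is equivalent, for each fixed $\xi\neq0$, to strict interlacing of $P_{m-1}(\lambda,\xi)$ and $P_m(\lambda,\xi)-P_{m-2}(\lambda,\xi)$. So your plan is aligned with the paper's own discussion, and what remains is indeed the content of the cited result in \cite{VV}.

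Your $(\Leftarrow)$ direction is correct and complete: the sign count at the roots $\tilde\lambda_j$ of $P_{m-1}$, together with the no-triple-zero hypothesis, forces $(P_m-P_{m-2})(\tilde\lambda_j,\xi')$ to be nonzero with alternating sign $(-1)^{m-j}$, and the leading behaviour at $\pm\infty$ then pins down one simple real root in each of the $m$ complementary intervals.

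For $(\Rightarrow)$ your scaling idea is right, but the phrase ``run the sign count in reverse'' undersells what actually does the work. Once Hermite--Biehler gives you, for every $r>0$, that $P_{m-1}(\mu,\xi')$ and $P_m(\mu,\xi')-r^{-2}P_{m-2}(\mu,\xi')$ are both strictly hyperbolic and strictly interlace, everything in Hypothesis~\ref{hyp:Q2} drops out of the \emph{limits} $r\to\infty$ and $r\to0$ of this one-parameter family, not from a reversed count. Concretely: strict hyperbolicity of $P_m-r^{-2}P_{m-2}$ for all $r$, together with continuity of roots, gives hyperbolicity of $P_m$ (as $r\to\infty$) and of $P_{m-2}$ (as $r\to0$, tracking the $m-2$ bounded roots) directly, so your separate $\nu\mapsto-\bar\nu$ symmetry argument is not needed. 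The strict interlacing with $P_{m-1}$, passed to the same limits, degenerates to the two weak interlacings. Finally, a common zero of $P_m,P_{m-1},P_{m-2}$ at some $(\lambda_0,\xi')$ would make $\lambda_0$ a common root of $P_{m-1}$ and $P_m-r^{-2}P_{m-2}$ for \emph{every} $r$, contradicting strict interlacing; this gives the last bullet. With these clarifications your outline is a correct proof.
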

The strict stability of the polynomial $Q(\lambda,i\xi)$ for any $\xi\neq0$, with $Q=Q_1$ or $Q=Q_2$, guaranteed by Lemmas~\ref{lem:stable1} and~\ref{lem:stable2}, respectively, means that~$\Re \lambda_j(\xi)<0$ for any~$\xi\neq0$ and~$j=1,\ldots,m$, where $\lambda_j(\xi)$ are the roots of $Q(\lambda,i\xi)$. Since the roots of a polynomial continuously depend on its coefficients (see, for instance, \cite[Theorem 1.31]{R-S}), we get a uniform bound~$\Re\lambda_j(\xi)\leq -c$ in any compact subset~$K$ of~$\R^n\setminus\{0\}$, for some~$c=c(K)>0$. This property is sufficient to obtain exponential decay in time of $\hat u(t,\xi)$ in such compact subset of~$\R^n\setminus\{0\}$. To obtain decay estimates and to describe the asymptotic profile of the solution to~\eqref{eq:CPlin}, we need to study the asymptotic behavior of $\lambda_j(\xi)$ as $\xi\to0$ and as $\xii\to\infty$.

The strict interlacing of $P_{m-1}(\lambda,\xi')$ and~$P_m(\lambda,\xi')$ for any $\xi'\in S^{n-1}$ is sufficient to obtain an easy asymptotic behavior of the roots of $Q_1(\lambda,i\xi)$ as $\xi\to0$ and as $\xii\to\infty$, see later, respectively, \eqref{eq:strictlow} and~\eqref{eq:stricthigh}. The situation is more complicated for $Q=Q_2$ under the assumption of Hypothesis~\ref{hyp:Q2}, but it is much simpler if we strengthen this assumption requiring the strict interlacing of $P_{m-1}(\lambda,\xi')$ and~$P_m(\lambda,\xi')$, and of $P_{m-2}(\lambda,\xi')$ and~$P_{m-1}(\lambda,\xi')$, for any $\xi'\in S^{n-1}$.
\begin{theorem}\label{thm:CPQ2}
Assume that~$P_m(\partial_t,\partial_x)$ and~$P_{m-1}(\partial_t,\partial_x)$ are strictly hyperbolic, that the polynomials $P_{m-1}(\lambda,\xi')$ and $P_m(\lambda,\xi')$ strictly interlace, for any~$\xi'\in S^{n-1}$, and that
\[ u_j\in H^{s_0-j} \cap L^q \qquad \text{for some $s_0\geq m-1$, $q\in[1,2]$, for $j=0,\ldots,m-1$.} \]
Then the solution $u$ to Cauchy problem~\eqref{eq:CPlin} with $Q=Q_1$ satisfies the following decay estimate
\begin{equation}
\label{eq:estQ1}\begin{split}
\|\partial_t^ku(t,\cdot)\|_{\dot H^s}
    & \leq C\,(1+t)^{-\frac{n}2\left(\frac1q-\frac12\right)-\frac{k+s-(m-2)}2}\,\sum_{j=m-2,m-1}\big(\|u_j\|_{L^q}+\|u_j\|_{H^{k+s-j}}\big) \\
    & \qquad + C\,\sum_{j=0}^{m-3}(1+t)^{-\frac{n}2\left(\frac1q-\frac12\right)-\frac{k+s-j}2}\,\big(\|u_j\|_{L^q}+\|u_j\|_{H^{k+s-j}}\big),
\end{split}\end{equation}
for any $k+s\leq s_0$, such that $k+s\geq m-2$ if $q=2$, or $n(1/q-1/2)+k+s>m-2$, otherwise, with~$C>0$ independent of the initial data.

Assume, moreover, that~$P_{m-2}(\partial_t,\partial_x)$ is strictly hyperbolic and that the polynomials~$P_{m-2}(\lambda,\xi')$ and~$P_{m-1}(\lambda,\xi')$ strictly interlace, for any~$\xi'\in S^{n-1}$. Then the solution to Cauchy problem~\eqref{eq:CPlin} with $Q=Q_2$ satisfies the following decay estimate
\begin{equation}
\label{eq:estQ2}\begin{split}
\|\partial_t^k u(t,\cdot)\|_{\dot H^s}
    & \leq C\,(1+t)^{-\frac{n}2\left(\frac1q-\frac12\right)-\frac{k+s-(m-3)}2}\,\sum_{j=m-3}^{m-1}\big(\|u_j\|_{L^q}+\|u_j\|_{H^{k+s-j}}\big)\\
     & \qquad + C\,\sum_{j=0}^{m-4}(1+t)^{-\frac{n}2\left(\frac1q-\frac12\right)-\frac{k+s-j}2}\,\big(\|u_j\|_{L^q}+\|u_j\|_{H^{k+s-j}}\big),
\end{split}
\end{equation}
for $k+s\leq s_0$, such that $k+s\geq m-3$ if $q=2$, or $n(1/q-1/2)+k+s>m-3$, otherwise, with~$C>0$ independent of the initial data.
\end{theorem}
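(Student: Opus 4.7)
The plan is to Fourier-transform in $x$, reducing~\eqref{eq:CPlin} to the scalar ODE $Q(\partial_t,i\xi)\hat u=0$ with data $\partial_t^\ell\hat u(0,\xi)=\hat u_\ell(\xi)$. Under Lemmas~\ref{lem:stable1} and~\ref{lem:stable2}, for each $\xi\ne 0$ the symbol has $m$ roots $\lambda_j(\xi)$ with $\Re\lambda_j<0$ (simple outside an exceptional set), so the solution is represented as $\hat u(t,\xi)=\sum_{j=1}^m\alpha_j(\xi)e^{\lambda_j(\xi)t}$ with $\alpha_j(\xi)$ recovered from the data through the Vandermonde system in the powers $\lambda_j^\ell$. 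The estimates follow from a three-zone frequency analysis of this representation.

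First I would establish precise asymptotic expansions of the $\lambda_j(\xi)$. At $\xi=0$, $Q_1(\lambda,0)=\lambda^{m-1}(\lambda+c_{m-1,0})$, so $m-1$ roots bifurcate from zero and one stays near $-c_{m-1,0}$. Substituting $\lambda=i|\xi|\nu$ and using homogeneity reduces the small-root equation to $P_{m-1}(\nu,\xi')+i|\xi|P_m(\nu,\xi')=0$; the implicit function theorem near each simple root $\tilde\lambda_k(\xi')$ of $P_{m-1}(\cdot,\xi')$ then gives
\begin{equation*}
\lambda_k(\xi)=i\,\tilde\lambda_k(\xi')\,|\xi|\;-\;\frac{P_m(\tilde\lambda_k(\xi'),\xi')}{\partial_\nu P_{m-1}(\tilde\lambda_k(\xi'),\xi')}\,|\xi|^2\;+\;O(|\xi|^3),\qquad k=1,\ldots,m-1,
\end{equation*}
where the real coefficient of $|\xi|^2$ is strictly positive by the sign-alternation induced by strict interlacing, uniformly in $\xi'\in S^{n-1}$. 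A parallel expansion as $|\xi|\to\infty$, swapping the roles of $P_m$ and $P_{m-1}$, yields $\lambda_k(\xi)=i\lambda_k(\xi')|\xi|+\gamma_k(\xi')+O(|\xi|^{-1})$ with $\Re\gamma_k\le -c$. Compactness combined with Lemma~\ref{lem:stable1} gives uniform $\Re\lambda_j(\xi)\le -c$ on the middle zone $\delta\le|\xi|\le N$. For $Q_2$, an analogous analysis---based on the factorization $\lambda^{m-2}(\lambda^2+c_{m-1,0}\lambda+c_{m-2,0})$ at $\xi=0$ and on the strengthened strict interlacing---produces $m-2$ heat-like roots $i\bar\lambda_k(\xi')|\xi|-b_k(\xi')|\xi|^2+O(|\xi|^3)$ attached to the roots of $P_{m-2}(\cdot,\xi')$, plus two roots converging to the stable roots of the quadratic.

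Next I would track the Vandermonde coefficients $\alpha_j(\xi)=\sum_\ell V^{-1}_{j\ell}(\xi)\hat u_\ell(\xi)$. At low frequency, the heat-like block of $m-1$ (resp.~$m-2$) roots has pairwise gaps of order $|\xi|$, and a cofactor/Lagrange computation, combined with the root expansions above, gives $|V^{-1}_{j\ell}(\xi)|\lesssim|\xi|^{-\min(\ell,m-2)}$ for $j$ in the $Q_1$ heat-like block (resp.~$|\xi|^{-\min(\ell,m-3)}$ for $Q_2$). Multiplying by $|\lambda_j|^k\sim|\xi|^k$ and by $e^{\Re\lambda_j t}\le e^{-c|\xi|^2 t}$ yields the low-frequency pointwise bound
\begin{equation*}
|\partial_t^k\hat u(t,\xi)|\;\lesssim\;e^{-c|\xi|^2 t}\sum_{\ell=0}^{m-1}|\xi|^{k-\min(\ell,m-2)}|\hat u_\ell(\xi)|,
\end{equation*}
and similarly for $Q_2$ with $m-2$ replaced by $m-3$. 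Crucially, the top two indices $\ell=m-1,m-2$ share the worst weight $|\xi|^{k-(m-2)}$ and thus source the slowest-decaying term, while lower indices acquire extra $|\xi|^{m-2-\ell}$ smallness.

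Finally I would assemble the $L^2_\xi$ norm of $|\xi|^s\partial_t^k\hat u$. The middle and high-frequency contributions are dominated by exponential decay in $t$ times the $H^{k+s-\ell}$ data norms. On the low-frequency zone, Hausdorff--Young replaces $\|\hat u_\ell\|_{L^{q'}}$ by $\|u_\ell\|_{L^q}$ for $q\in[1,2]$, after which H\"older's inequality applied to the product $|\xi|^{2\sigma}e^{-c|\xi|^2 t}\cdot|\hat u_\ell|^2$---with $\sigma=k+s-\min(\ell,m-2)$, resp.~$m-3$---produces, via $\eta=\sqrt{t}\,\xi$, the factor $(1+t)^{-n(1/q-1/2)-(k+s-\min(\ell,m-2))}$, integrable exactly under the stated condition $n(1/q-1/2)+k+s>m-2$, or $k+s\ge m-2$ when $q=2$. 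Taking square roots and regrouping $\ell\in\{m-2,m-1\}$ against $\ell\le m-3$ gives \eqref{eq:estQ1}, and the shift by one index gives \eqref{eq:estQ2}. The main technical obstacle is the Vandermonde step: verifying uniformly in $\xi'\in S^{n-1}$ that the apparent singularity of order $|\xi|^{-(m-2)}$ (resp.~$|\xi|^{-(m-3)}$) is sharply distributed across the $\hat u_\ell$ with the precise scaling $|\xi|^{-\min(\ell,m-2)}$; once this is in hand, the rest is the standard diffusion-phenomenon machinery.
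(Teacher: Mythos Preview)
Your proposal is correct and follows essentially the same route as the paper: Fourier transform, asymptotic expansions of the roots of $Q(\lambda,i\xi)$ at low and high frequencies (these are the paper's Lemmas~\ref{lem:low} and~\ref{lem:high}), the explicit Vandermonde/Lagrange representation~\eqref{eq:urepgen} with the coefficient bounds you describe, and the three-zone analysis combined with Hausdorff--Young, H\"older and the scaling $\eta=\sqrt t\,\xi$. Two small points to tidy up: the sign in your low-frequency expansion should read $+P_m/\partial_\nu P_{m-1}$ (and this quantity is \emph{negative} by interlacing, giving $\Re\lambda_k<0$); and you should say a word about frequencies where roots coincide---in particular the intermediate zone and, for $Q_2$, the case $c_{m-1,0}^2=4c_{m-2,0}$---which the paper handles by passing to the limit in~\eqref{eq:urepgen} and absorbing the resulting polynomial-in-$t$ factors into the exponential decay.
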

We stress that in estimate~\eqref{eq:estQ2} an additional power $(1+t)^{-\frac12}$ appears, with respect to estimate~\eqref{eq:estQ1}. This extra decay rate is due to the presence of the additional dissipative term $P_{m-2}(\partial_t,\partial_x)$.

It is well-known that problem~\eqref{eq:CPlin} is $H^s$ well-posed, since $P_m(\partial_t,\partial_x)$ is strictly hyperbolic and has constant coefficients. The decay rate which appears taking higher order derivatives and assuming initial data in $L^q$, for some $q\in[1,2)$, is a consequence of the presence of the dissipative terms $P_{m-1}(\partial_t,\partial_x)$ and $P_{m-2}(\partial_t,\partial_x)$, and it is a typical phenomenon of parabolic problems.
\begin{remark}
Theorem~\ref{thm:CPQ2} for $Q=Q_1$ naturally extends results which are very well known for damped wave equations~\cite{M76} and analogous second-order equations; indeed, we may formally set $m=2$ and Theorem~\ref{thm:CPQ2}, as well as the other results of this paper (Lemmas~\ref{lem:low} and~\ref{lem:high}, Theorem~\ref{thm:asymp}), remains valid for $Q=Q_1$. However, in that case, the roots of $Q_1(\lambda,i\xi)$ may be explicitly computed, so there is no benefit from our analysis.
\end{remark}
When $q=1$, the optimality of the decay rate 
in~\eqref{eq:estQ1} and of the decay rate 
in~\eqref{eq:estQ2} follows as a consequence of the asymptotic profile of the solution to~\eqref{eq:CPlin} (see later, Theorem~\ref{thm:asymp}), under the moment condition $M\neq0$, where
\begin{equation}\label{eq:moment}
M = \int_{\R^n} \big( u_{m-1}(x) + c_{m-1,0}\, u_{m-2}(x) + c_{m-2,0}\,u_{m-3}(x)\big)\,dx\,,
\end{equation}
where we set $c_{m-2,0}=0$ if we consider $Q=Q_1$. If $M=0$, then
\[ \|\partial_t^k u(t,\cdot)\|_{\dot H^s}=\textit{o}\big((1+t)^{-\frac{n}2\left(\frac1q-\frac12\right)-\frac{k+s-(m-3)}2}\big),\]
see Theorem~\ref{thm:asymp}.
\begin{remark}
In the case $Q=Q_1$, Theorem~\ref{thm:CPQ2} has been proved in~\cite{DAJ16} in the special case $P_{m-1}(\lambda,\xi) = c\,\partial_\lambda P_m(\lambda,\xi)$ for $c>0$. We emphasize that a strictly hyperbolic polynomial and its derivative always strictly interlace. In the case $Q=Q_2$, Theorem~\ref{thm:CPQ2} has been proved in~\cite{DAJ17} in the special case $P_{m-j}(\lambda,\xi) = c_j\,\partial_\lambda^j P_m(\lambda,\xi)$ for $c_j>0$, $j=1,2$. In~\cite[Theorem 1]{DAJ17}, also additional lower order terms are considered, under the assumption that the coefficients $\{c_j\}$ themselves are the coefficients of a strictly stable polynomial $\sum_{j=0}^{r} c_j z^{r-j}$. Moreover, weakly hyperbolic operators~$P_m$ are considered; when $Q=Q_2$, $P_m$ may have at most double roots. This scenario, studied in~\cite{DAJ17} for the special case $P_{m-j}(\lambda,\xi) = c_j\,\partial_\lambda^j P_m(\lambda,\xi)$, is generalized in \textsection~\ref{sec:weak} to more general interlacing assumptions for $P_m$, $P_{m-1}$, $P_{m-2}$, see Lemma~\ref{lem:highweak}, in particular.
\end{remark}
We stress that Hypothesis~\ref{hyp:Q2} is a weaker assumption than the assumption that $P_{m-1}(\lambda,\xi')$ and~$P_m(\lambda,\xi')$ strictly interlace, and $P_{m-2}(\lambda,\xi')$ and~$P_{m-1}(\lambda,\xi')$ strictly interlace, for any~$\xi'\in S^{n-1}$. However, under this stronger assumption, we have a simpler description of the asymptotic behavior of the roots $\lambda_j(\xi)$ of $Q_2(\lambda,i\xi)$, as $\xi\to0$ and as $\xii\to\infty$ (see later, \eqref{eq:strictlow}, \eqref{eq:lambdalow2}  and~\eqref{eq:lambdahighsimple},\eqref{eq:stricthigh}). This simpler behavior is also better, in the sense that it produces better decay estimates for the solution to~\eqref{eq:CPlin}. For this reason, in Theorem~\ref{thm:CPQ2}, we derived estimates under the stronger assumption of strict interlacing polynomials.

In \textsection~\ref{sec:weak} we discuss the different, interesting, scenarios which may occur when $Q(\lambda,i\xi)$ is strictly stable for any~$\xi\neq0$, 
i.e., Hypothesis~\ref{hyp:Q2} holds, but the interlacing of the polynomials is not strict. Four phenomena may appear, according to the ways in which the strict interlacing condition is weakened:
\begin{itemize}
\item the polynomial decay rate is related to a different scaling of the time and space variables if the interlacing of $P_{m-2}(\lambda,\xi')$ and $P_{m-1}(\lambda,\xi')$ is not strict for some $\xi'\in S^{n-1}$ (see~\eqref{en:strictlowloss} in Theorem~\ref{thm:lowloss});
\item a loss of decay rate appears in the decay estimate, if $P_{m-2}(\partial_t,\partial_x)$ is not strictly hyperbolic (see~\eqref{en:weaklowloss} in Theorem~\ref{thm:lowloss});
\item a \emph{regularity-loss type decay} appears if the interlacing of $P_{m-1}(\lambda,\xi')$ and $P_{m}(\lambda,\xi')$ is not strict for some $\xi'\in S^{n-1}$ (see Proposition~\ref{prop:lossdecay});
\item a loss of regularity appears if $P_m(\partial_t,\partial_x)$ is not strictly hyperbolic (see Proposition~\ref{prop:regloss}).
\end{itemize}
Assuming only Hypothesis~\ref{hyp:Q2}, i.e., the sufficient and necessary condition for the strict stability of $Q_2(\lambda,i\xi)$, all or some of the four phenomena may appear. On the one hand, the obtained decay estimate is weaker than~\eqref{eq:estQ2}, which holds if we assume the strict interlacing of $P_{m-2}(\lambda,\xi')$ and $P_{m-1}(\lambda,\xi')$ for any $\xi'\in S^{n-1}$. On the other hand, if $P_{m-1}(\lambda,\xi')$ and $P_{m}(\lambda,\xi')$  do not strictly interlace for any $\xi'\in S^{n-1}$, we shall require more initial data regularity. In particular, if $P_m(\partial_t,\partial_x)$ is hyperbolic, but not strictly, then~\eqref{eq:CPlin} is not well-posed in $H^s$, in general, but only in $H^\infty$, 
since a loss of $1$ derivative may occur.
\begin{theorem}\label{thm:verygen}
Assume that Hypothesis~\ref{hyp:Q2} holds, and that
\[ u_j\in L^q \cap H^\infty \qquad \text{for some $q\in[1,2]$, for $j=0,\ldots,m-1$.} \]
Then the solution $u$ to Cauchy problem~\eqref{eq:CPlin} with $Q=Q_2$ satisfies the following decay estimate
\begin{equation}
\label{eq:estQ2verygen}\begin{split}
\|\partial_t^ku(t,\cdot)\|_{\dot H^s}
    & \leq C\,\sum_{j=m-3}^{m-1}(1+t)^{-\eta}\,\|u_j\|_{L^q} + C\,\sum_{j=0}^{m-1} (1+t)^{-\frac\nu2}\,\|u_j\|_{H^{k+s+\nu-j}}\\
    & \qquad + C\,\sum_{j=0}^{m-4}(1+t)^{-\frac{n}4\left(\frac1q-\frac12\right)-\frac{k+s-j}4}\,\|u_j\|_{L^q},
\end{split}\end{equation}
for any~$\nu\geq1$, where
\begin{equation}
\label{eq:estQ2worst}
\eta=\min\left\{\frac{n}4\left(\frac1q-\frac12\right)+\frac{k+s-(m-3)}4, \ \frac{n}2\left(\frac1q-\frac12\right)+\frac{k+s-(m-2)}2\right\},
\end{equation}
provided that $k+s\geq m-3$ if $q=2$, or $n(1/q-1/2)+k+s>m-3$ otherwise, with~$C>0$ independent of the initial data.
\end{theorem}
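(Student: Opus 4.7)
The plan is to work entirely on the Fourier side: write $\hat u(t,\xi)=\sum_{j=1}^{m}A_j(\xi)\,e^{\lambda_j(\xi)t}$, where $\lambda_1(\xi),\ldots,\lambda_m(\xi)$ are the roots of $Q_2(\lambda,i\xi)$ and the coefficients $A_j(\xi)$ are determined by the initial data through a Vandermonde-type linear system in the $\lambda_j(\xi)$, then estimate $\widehat{\partial_t^k u}(t,\xi)=\sum_j(\lambda_j(\xi))^{k}A_j(\xi)\,e^{\lambda_j(\xi)t}$ multiplied by $\xii^{s}$ in $L^2_\xi$. I would partition $\R^n_\xi$ into a low-frequency region $\xii\leq\epsilon$, a medium region $\epsilon\leq \xii\leq N$, and a high-frequency region $\xii\geq N$, with thresholds dictated by the domains of validity of the asymptotic expansions of the $\lambda_j(\xi)$. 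In the medium zone, strict stability of $Q_2(\lambda,i\xi)$ on compact sets away from the origin gives a uniform bound $\Re\lambda_j(\xi)\leq -c$, hence exponential decay, which is dominated by every term on the right of~\eqref{eq:estQ2verygen}.

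In the low-frequency zone, I invoke the general form of Lemma~\ref{lem:low} under Hypothesis~\ref{hyp:Q2}. Three families of roots appear as $\xi\to0$: $m-3$ roots of the form $\lambda_j(\xi)\sim -c_j+O(\xii)$ with $c_j>0$, two roots of the form $\lambda\sim i\mu\xii-c\xii^{2}$ coming from the $(m,m-1)$ interlacing, and, crucially, one root whose real part may vanish like $\xii^{4}$ rather than $\xii^{2}$ when the interlacing of $P_{m-2}$ and $P_{m-1}$ is not strict. Combining Plancherel with Hausdorff--Young on the $L^q$ part produces the two competing exponents: $\frac{n}{2}(1/q-1/2)+(k+s-(m-2))/2$ from the two fast roots and $\frac{n}{4}(1/q-1/2)+(k+s-(m-3))/4$ from the slow root, yielding the minimum in~\eqref{eq:estQ2worst}. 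The third term of~\eqref{eq:estQ2verygen} collects the contributions of $u_j$ for $j\leq m-4$, which enter only through the decaying exponentials $e^{-c_jt}$ coming from the $m-3$ fast roots; these are integrated at the slower $\xii^{4}$ scale and hence carry the $1/4$ exponent.

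In the high-frequency zone, the possible failure of strict interlacing between $P_{m-1}$ and $P_m$, together with the possible presence of double roots of $P_m$, forces me to apply Lemma~\ref{lem:high} in its weak form (cf.\ Lemma~\ref{lem:highweak}). The typical outcome is $\Re\lambda_j(\xi)\leq -c\,\xii^{-\alpha}$ for some $\alpha>0$, from which the elementary inequality $e^{-ct\xii^{-\alpha}}\xii^{\alpha\nu/2}\leq C(1+t)^{-\nu/2}\xii^{\alpha\nu}$ for $\nu\geq1$ gives exactly the \emph{regularity-loss} term $(1+t)^{-\nu/2}\|u_j\|_{H^{k+s+\nu-j}}$. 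A double root of $P_m$ generates a Jordan-type factor that would produce a polynomial growth $t\,\xii^{-1}$ in $\xi$; this is absorbed at the cost of shifting the Sobolev index by one, which is compatible with the requirement $\nu\geq 1$. Assembling the three zones and taking suprema over the representation yields~\eqref{eq:estQ2verygen} under the stated admissibility conditions on $k+s$.

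The main obstacle is the low-frequency analysis: I must track all $m$ roots simultaneously, invert the (almost degenerate) Vandermonde matrix giving the $A_j(\xi)$ in terms of the $\hat u_j(\xi)$, and justify that the data $u_j$ with $j\leq m-4$ really contribute only through the slow-decay term with the $1/4$ scaling, rather than through all three families. Once the Vandermonde analysis and the zone-dependent root asymptotics of Lemmas~\ref{lem:low} and~\ref{lem:high} are in place, the Fourier estimation reduces to Plancherel on the $L^2$-weighted part and Hausdorff--Young on the $L^q$-weighted part, which are routine.
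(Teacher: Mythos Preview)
Your overall architecture---Fourier representation, three-zone splitting, root asymptotics from Lemmas~\ref{lem:low2weak} and~\ref{lem:highweak}, then Plancherel/Hausdorff--Young---matches the paper, which proves Theorem~\ref{thm:verygen} by combining Theorem~\ref{thm:lowloss} (low frequencies) with Proposition~\ref{prop:regloss} (high frequencies). The high-frequency discussion is essentially right. However, your low-frequency picture is substantially off, and as written the argument would not produce~\eqref{eq:estQ2verygen}.

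First, the root count at $\xi=0$ for $Q_2$ is wrong. Equation~\eqref{eq:lambda0} with $\ell=2$ is a \emph{quadratic}, so exactly \emph{two} roots $\lambda_{m-1}(0),\lambda_m(0)$ stay away from the origin (see~\eqref{eq:lambdam-1m}), and the remaining $m-2$ roots vanish as $\xi\to0$. You have this inverted ($m-3$ nonvanishing, three vanishing). Consequently, your explanation of the third sum in~\eqref{eq:estQ2verygen} is incorrect: the data $u_j$ with $j\leq m-4$ do \emph{not} enter ``only through the decaying exponentials $e^{-c_jt}$''; they enter through the $m-2$ vanishing roots via the Vandermonde inversion, with coefficient $\xii^{-j}$ and the \emph{same} exponential $e^{-c\xii^4 t}$, which is exactly what gives the $1/4$-scaled decay $(1+t)^{-\frac{n}{4}(\frac1q-\frac12)-\frac{k+s-j}{4}}$.

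Second, you misattribute the two competing exponents in~\eqref{eq:estQ2worst}. At low frequencies for $Q_2$, only the $(m-2,m-1)$ interlacing is relevant; the $(m,m-1)$ interlacing governs $\xii\to\infty$. The first exponent in the minimum comes from case~\eqref{en:lows} of Lemma~\ref{lem:low2weak} (a common simple root of $P_{m-2}$ and $P_{m-1}$), where $\Re\lambda_j\sim -c\xii^{4}$. The second exponent comes from case~\eqref{en:lowd} (a \emph{double} root of $P_{m-2}$): there $\Re\lambda_j\sim -c\xii^{2}$, but the Vandermonde denominator degenerates ($\lambda_j-\lambda_{j+1}$ vanishes to higher order), producing an extra factor $(1+t\xii)$ in the pointwise bound and hence a loss of $(1+t)^{1/2}$; this is what shifts $m-3$ to $m-2$ in the second term of the minimum (cf.\ the proof of Theorem~\ref{thm:lowloss}). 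Your proposal never mentions the double-root scenario for $P_{m-2}$, yet it is precisely the source of that exponent. Until you correct the root structure and identify these two mechanisms, the low-frequency step does not go through.
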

The possible regularity loss of $1$ derivative is described by the requirement $\nu\geq1$ in Theorem~\ref{thm:verygen}; the regularity-loss type decay means that the decay rate $-\nu/2$ improves if additional regularity is taken on initial data. The decay rate in~\eqref{eq:estQ2worst} compares the influence of the possible source of loss of decay, with respect to~\eqref{eq:estQ2}. The possibility that $P_{m-2}(\partial_t,\partial_x)$ is not strictly hyperbolic leads to a loss of $(1+t)^{\frac12}$ power of decay rate, which become analogous to the decay rate in~\eqref{eq:estQ1}. On the other hand, the possibility that the interlacing of $P_{m-2}(\lambda,\xi')$ and $P_{m-1}(\lambda,\xi')$ is not strict, in general, leads to a decay rate structure, in which the power $4$ appears in the denominator of $n(1/q-1/2)+k+s-(m-3)$, instead of the power $2$. This phenomenon is related to a faster vanishing speed of the real part of some root of $Q_2(\lambda,i\xi)$, as $\xi\to0$.

\bigskip

In the setting of long-time decay estimates for higher order inhomogeneous equations with constant coefficients, we address the interested reader to~\cite{RS10}, where dispersive and Strichartz estimates are obtained. In particular, under different hypotheses of geometric type on the roots of the full symbol of the operator, the authors are able to derive $L^q-L^{q'}$ estimates, $q\in[2,\infty]$, where~$q'=q/(q-1)$, for inhomogeneous hyperbolic equations. The decay rate in these estimates has the classical form~$(1+t)^{-\kappa\left(\frac1q-\frac1{q'}\right)}$, where~$\kappa>0$ depends on the assumptions on the roots of the full symbol. 
Very weak dissipative effects have been considered for higher order hyperbolic equations with time-dependent coefficients in~\cite{DAE12}.

A huge literature exists for partially dissipative first-order hyperbolic systems with constant coefficients, under suitable assumptions on the term of order zero, and its relations with the first-order term. We address the interested reader to~\cite{SK85}, and to~\cite{BZ11} and the references therein, being aware that this cannot be an exhaustive list. Dissipative estimates for first-order hyperbolic systems with time-dependent coefficients have been obtained in~\cite{W14}.

An advantage of working with equations instead of systems is that one may possibly formulate more explicit assumptions to describe dissipative effects and investigate more in details the asymptotic profile of the solution to a problem.


\begin{Not}
We use the following notation:
\begin{itemize}
\item $\hat u(t,\xi)=\mathfrak{F}(u(t,\cdot))(\xi)$ denotes the Fourier transform of $u(t,x)$, with respect to the variable $x$;
\item for a given $\xi\neq0$, we put $\xi'=\xi/\xii$; $\xi'$ may also denote a point on the unit sphere $S^{n-1}=\{\xi\in\R^n: \ \xii=1\}$;
\item $L^q=L^q(\R^n)$, $q\in[1,\infty)$ denotes the usual space of function whose $q$-th power is integrable and $L^\infty=L^\infty(\R^n)$ denotes the usual space of essentially bounded functions;
\item $H^s$, $s\geq0$, denotes the Sobolev space of $L^2$ functions~$f$ such that $\xii^s\hat f$ is also in~$L^2$, with norm $\|f\|_{H^s}=\|(1+\xii^2)^{\frac{s}2}\hat f\|_{L^2}$; we define the quantity $\|f\|_{\dot H^s}=\|\xii^s\hat f\|_{L^2}$; for integer values of $s$, $H^s$ is the space of $L^2$ functions whose distributional derivatives of order not larger than $s$ are in $L^2$, and $\|f\|_{\dot H^s}\equiv\sum_{|\alpha|=s}\|\partial_x^\alpha f\|_{L^2}$; $H^\infty=\cap_{s>0} H^s$ is the subspace of $\mathcal C^\infty$ functions with all derivatives in $L^2$;
\item we write $f\lesssim g$ or $g\gtrsim f$ if there exists $C>0$ such that $f\leq Cg$; 
\item $I_a$ denotes the Riesz potential of order $a$, i.e., $\mathfrak{F}(I_af)=\xii^{-a}\hat f$ in appropriate sense.
\item in this paper, it is crucial to distinguish whether a polynomial is evaluated at $\xi$ (for instance, for the homogeneous polynomial $P_{m-j}(\lambda,\xi)$) or at $i\xi$ (for instance, for the inhomogeneous polynomial $Q(\lambda,i\xi)$); the variable of the roots of a polynomial will always be $\xi$, or $\xi'$ for homogeneous polynomials, independently if the polynomial is real or complex.
\end{itemize}
%
%
For any $\xi'\in S^{n-1}$, $a_1(\xi')$, \ldots, $a_m(\xi')$ denote the $m$ roots of~$P_m(\lambda,\xi')$, $b_1(\xi')$, \ldots, $b_{m-1}(\xi')$ denote the $m-1$ roots of~$P_{m-1}(\lambda,\xi')$ and $d_1(\xi')$, \ldots, $d_{m-2}(\xi')$ denote the $m-2$ roots of~$P_{m-2}(\lambda,\xi')$. Namely,
\begin{align*}
P_m(\lambda,\xi')
    & =\prod_{k=1}^m (\lambda-a_k(\xi')),\\
P_{m-1}(\lambda,\xi')
    & =c_{m-1,0}\,\prod_{k=1}^{m-1} (\lambda-b_k(\xi')),\\
P_{m-2}(\lambda,\xi')
    & =c_{m-2,0}\,\prod_{k=1}^{m-2} (\lambda-d_k(\xi')).
\end{align*}
On the other hand, $\lambda_1(\xi)$, \ldots, $\lambda_m(\xi)$ denote the $m$ complex-valued roots of $Q(\lambda,i\xi)$. These latter are not homogeneous, but to simplify some computation in the study of their asymptotic behavior, we introduce the auxiliary functions $\mu_j(\rho,\xi')=\rho^{-1}\lambda_j(\rho\xi')$, where $\rho=\xii$ and $\xi'=\xi/\xii$, for any $\xi\neq0$.

For a given homogeneous hyperbolic polynomial~$P_m(\lambda,\xi)$ of order $m$ with roots $\lambda_j(\xi')$ for any~$\xi'\in S^{n-1}$, we also introduce the following notation:
\begin{align*}
\check{P}_{m,j}(\lambda,\xi')
    & = \frac{P_m(\lambda,\xi')}{\lambda-\lambda_j(\xi')},\\
\check{P}_{m,j,\ell}(\lambda,\xi')
    & = \frac{P_m(\lambda,\xi')}{(\lambda-\lambda_j(\xi'))(\lambda-\lambda_\ell(\xi'))}.
\end{align*}
%
\end{Not}


\section{Asymptotic profiles of the roots of the symbol and of the solution to~\eqref{eq:CPlin}}\label{sec:asymp}

We may study the asymptotic behavior of the roots of $Q(\lambda,i\xi)$ as~$\xi\to0$ and as~$\xii\to\infty$ in a more general setting than the one in Theorem~\ref{thm:CPQ2}. In general, in the following, we put
\begin{equation}\label{eq:Q}
Q(\lambda,i\xi)= \sum_{j=0}^\ell P_{m-j}(\lambda,i\xi),
\end{equation}
where $P_{m-j}(\partial_t,\partial_x)$ are homogeneous operators of degree~$m-j$, with $c_{m,0}=1$ and $c_{m-\ell,0}>0$ (same notation as in~\eqref{eq:P}). Following as in~\cite{VV}, it is easy to see that a necessary condition for the strict stability of $Q(\lambda,i\xi)$ for any~$\xi\neq0$, is the interlacing of $P_{m-\ell}(\lambda,\xi')$ and $P_{m-\ell+1}(\lambda,\xi')$, and of $P_{m-1}(\lambda,\xi')$ and $P_m(\lambda,\xi')$, for any $\xi'\in S^{n-1}$.

Strengthening this assumption to the strict interlacing, the asymptotic behavior of the roots of $Q(\lambda,i\xi)$ is only determined by $P_{m-\ell}$ and $P_{m-\ell+1}$ as~$\xi\to0$, and by $P_m$ and $P_{m-1}$ as~$\xii\to\infty$, whereas the other polynomials come into play otherwise (see \textsection\ref{sec:weak}). The following two lemmas describe such behavior.
\begin{lemma}\label{lem:low}
Assume that~$P_{m-\ell}(\partial_t,\partial_x)$ and~$P_{m-\ell+1}(\partial_t,\partial_x)$ are strictly hyperbolic, and that~$P_{m-\ell}(\lambda,\xi')$ and~$P_{m-\ell+1}(\lambda,\xi')$ strictly interlace for any $\xi'\in S^{n-1}$. Then we may label the $m$ roots~$\lambda_j(\xi)$ of~$Q(\lambda,i\xi)=0$, $\xi\in\R^n$, in such a way that $\lambda_m(0)$, \ldots, $\lambda_{m-\ell+1}(0)$ are the solutions of
\begin{equation}\label{eq:lambda0}
\sum_{j=0}^{\ell} c_{m-j,0}\lambda^{\ell-j}=0,
\end{equation}
and
\begin{equation}\label{eq:lambdalow}
\lambda_j(\xi) = i\xii \beta_j(\xi') + \xii^2\,\frac{P_{m-\ell+1}(\beta_j(\xi'),\xi')}{\check{P}_{m-\ell,j}(\beta_j(\xi'),\xi')} + \textit{o}(\xii^2), \quad j=1,\ldots,m-\ell,
\end{equation}
as~$\xi\to0$, where~$\beta_j(\xi')$ are the $m-\ell$ real roots of~$P_{m-\ell}(\lambda,\xi')$, and~$\xi'=\xi/\xii\in S^{n-1}$.
\end{lemma}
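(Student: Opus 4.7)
The plan is to isolate the $m-\ell$ roots of $Q(\lambda,i\xi)$ that vanish as $\xi\to 0$ from the $\ell$ roots that do not, and then to analyze the vanishing ones by a rescaling followed by the analytic implicit function theorem. Since $P_{m-j}(\lambda,0)=c_{m-j,0}\,\lambda^{m-j}$, I first compute
\[
Q(\lambda,0)=\sum_{j=0}^{\ell}c_{m-j,0}\,\lambda^{m-j}=\lambda^{m-\ell}\sum_{j=0}^{\ell}c_{m-j,0}\,\lambda^{\ell-j},
\]
so $\lambda=0$ is a root of multiplicity $m-\ell$ and the remaining $\ell$ roots coincide with the solutions of \eqref{eq:lambda0}. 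Continuity of the roots in the coefficients \cite[Theorem 1.31]{R-S} lets me label them so that $\lambda_{m-\ell+1}(\xi),\dots,\lambda_m(\xi)$ stay close to the roots of \eqref{eq:lambda0}, while $\lambda_1(\xi),\dots,\lambda_{m-\ell}(\xi)\to 0$.

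To study the vanishing roots, I set $\mu=\lambda/\rho$ with $\rho=\xii$, $\xi'=\xi/\xii$. The joint homogeneity $P_{m-k}(\rho\mu,i\rho\xi')=\rho^{m-k}P_{m-k}(\mu,i\xi')$ turns $Q(\rho\mu,i\rho\xi')=0$, after dividing by $\rho^{m-\ell}$, into
\[
F(\mu,\rho,\xi'):=\sum_{k=0}^{\ell}\rho^{\ell-k}P_{m-k}(\mu,i\xi')=0.
\]
At $\rho=0$ this reduces to $P_{m-\ell}(\mu,i\xi')=0$. The homogeneity identity $P_{m-\ell}(\mu,i\xi')=i^{m-\ell}P_{m-\ell}(-i\mu,\xi')$ shows that the roots at $\rho=0$ are exactly $\mu=i\beta_j(\xi')$, $j=1,\dots,m-\ell$.

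Strict hyperbolicity of $P_{m-\ell}$ makes each $\beta_j(\xi')$ simple, so $\partial_\mu F(i\beta_j,0,\xi')\neq 0$. The analytic implicit function theorem (in $\rho$, at fixed $\xi'$) produces $m-\ell$ branches $\mu_j(\rho,\xi')=i\beta_j(\xi')+\rho\,\nu_j(\xi')+o(\rho)$; since the limits $i\beta_j(\xi')$ are distinct, these branches are distinct for small $\rho>0$ and therefore exhaust the $m-\ell$ vanishing roots of $Q(\lambda,i\xi)$ predicted by the multiplicity count above. Matching the $O(\rho)$ terms in $F(\mu_j(\rho,\xi'),\rho,\xi')=0$ gives
\[
\nu_j(\xi')=-\frac{P_{m-\ell+1}(i\beta_j(\xi'),i\xi')}{\partial_\mu P_{m-\ell}(i\beta_j(\xi'),i\xi')}.
\]

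To put $\nu_j$ in the form stated in \eqref{eq:lambdalow}, I use $P_{m-\ell+1}(i\beta_j,i\xi')=i^{m-\ell+1}P_{m-\ell+1}(\beta_j,\xi')$ and, by the chain rule applied to $P_{m-\ell}(\mu,i\xi')=i^{m-\ell}P_{m-\ell}(-i\mu,\xi')$, the identity $\partial_\mu P_{m-\ell}(i\beta_j,i\xi')=i^{m-\ell-1}\partial_\lambda P_{m-\ell}(\beta_j,\xi')$; the factorization $P_{m-\ell}(\lambda,\xi')=(\lambda-\beta_j(\xi'))\check{P}_{m-\ell,j}(\lambda,\xi')$ together with the simplicity of $\beta_j$ gives $\partial_\lambda P_{m-\ell}(\beta_j,\xi')=\check{P}_{m-\ell,j}(\beta_j,\xi')$. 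Since $-i^{m-\ell+1}/i^{m-\ell-1}=-i^2=1$, this yields $\nu_j(\xi')=P_{m-\ell+1}(\beta_j,\xi')/\check{P}_{m-\ell,j}(\beta_j,\xi')$, and consequently $\lambda_j(\xi)=\rho\,\mu_j(\rho,\xi')$ takes the form announced in \eqref{eq:lambdalow}. The main technical point is the accounting step: one has to be sure that the $m-\ell$ smooth branches supplied by the implicit function theorem really capture every vanishing root, with the correct multiplicity; this is precisely what strict hyperbolicity of $P_{m-\ell}$ guarantees.
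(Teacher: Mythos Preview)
Your proof is correct and follows essentially the same approach as the paper: both factor $Q(\lambda,0)$ to separate the $\ell$ nonvanishing roots from the $m-\ell$ vanishing ones, rescale via $\mu=\lambda/\rho$, use homogeneity to reduce to $P_{m-\ell}(\mu,i\xi')+\rho\,P_{m-\ell+1}(\mu,i\xi')+o(\rho)=0$, and read off the first-order correction from the simplicity of $i\beta_j(\xi')$. The only cosmetic difference is that you package the extraction of the branches and the first-order coefficient through the analytic implicit function theorem, whereas the paper divides the equation directly by $\check P_{m-\ell,j}(\mu_j,i\xi')$ to isolate $\mu_j-i\beta_j(\xi')$; your explicit homogeneity bookkeeping for the powers of $i$ is a welcome addition.
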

The strict interlacing of~$P_{m-\ell}(\lambda,\xi')$ and~$P_{m-\ell+1}(\lambda,\xi')$ and the assumption that $c_{m-\ell},c_{m-\ell+1}>0$, guarantee that
\begin{equation}\label{eq:strictlow}
\frac{P_{m-\ell+1}(\beta_j(\xi'),\xi')}{\check{P}_{m-\ell,j}(\beta_j(\xi'),\xi')} = \frac{c_{m-\ell+1,0}\prod_{k=1}^{m-\ell+1} (\beta_j(\xi')-\alpha_k(\xi'))}{c_{m-\ell,0}\prod_{k\neq j}(\beta_j(\xi')-\beta_k(\xi'))} <0,
\end{equation}
in~\eqref{eq:lambdalow}, where $\alpha_j$ are the $m-\ell+1$ real roots of~$P_{m-\ell+1}(\lambda,\xi')$.

In particular, for~$\ell=1,2$ in Lemma~\ref{lem:low}, we obtain the following information:
\begin{itemize}
\item if~$Q=Q_1$ and~$P_{m-1}(\lambda,\xi')$ and~$P_m(\lambda,\xi')$ strictly interlace for any $\xi'\in S^{n-1}$, then $\lambda_m(0) = -c_{m-1,0}$ and
\begin{equation}\label{eq:lambdalow1}
\lambda_j(\xi) = i\xii b_j(\xi') + \xii^2\,\frac{P_{m-\ell+1}(b_j(\xi'),\xi')}{\check{P}_{m-\ell,j}(b_j(\xi'),\xi')} + \textit{o}(\xii^2), \quad j=1,\ldots,m-1,
\end{equation}
as~$\xi\to0$, where~$b_j(\xi')$ are the $m-1$ real roots of~$P_{m-1}(\lambda,\xi')$, and~$\xi'=\xi/\xii\in S^{n-1}$;
\item if~$Q=Q_2$ and~$P_{m-2}(\lambda,\xi')$ and~$P_{m-1}(\lambda,\xi')$ strictly interlace for any $\xi'\in S^{n-1}$, then
\begin{equation}\label{eq:lambdam-1m}
\lambda_{m-1,m}(0) = \begin{cases}
\dfrac{-c_{m-1,0}\pm \sqrt{c_{m-1,0}^2-4c_{m-2,0}}}2 & \text{if~$c_{m-1,0}^2\geq4c_{m-2,0}$,}\\
\dfrac{-c_{m-1,0}\pm i \sqrt{4c_{m-2,0}-c_{m-1,0}^2}}2 & \text{if~$c_{m-1,0}^2<4c_{m-2,0}$,}
\end{cases}\end{equation}
and
\begin{equation}\label{eq:lambdalow2}
\lambda_j (\xi) = i\xii d_j(\xi') + \xii^2\,\frac{P_{m-1}(d_j(\xi'),\xi')}{\check{P}_{m-2,j}(d_j(\xi'),\xi')} + \textit{o}(\xii^2),\quad j=1,\ldots,m-2,
\end{equation}
as~$\xi\to0$, where~$d_j(\xi')$ are the $m-2$ roots of~$P_{m-2}(\lambda,\xi')$, and~$\xi'=\xi/\xii\in S^{n-1}$.
\end{itemize}
%
%
%
\begin{lemma}\label{lem:high}
Assume that~$P_{m-1}(\partial_t,\partial_x)$ and~$P_{m}(\partial_t,\partial_x)$ are strictly hyperbolic, and that~$P_{m-1}(\lambda,\xi')$ and~$P_m(\lambda,\xi')$ strictly interlace for any $\xi'\in S^{n-1}$. Then we may label the $m$ roots~$\lambda_j(\xi)$ of~$Q(\lambda,i\xi)$, $\xi\in\R^n$, in such a way that:
\begin{equation}\label{eq:lambdahighsimple}
\lambda_j (\xi) = i\xii a_j(\xi') - \frac{P_{m-1}(a_j(\xi'),\xi')}{\check{P}_{m,j}(a_j(\xi'),\xi')} + \textit{o}(1),\quad j=1,\ldots,m,
\end{equation}
as~$\xii\to\infty$, where~$a_j(\xi')$ are the $m$ real roots of~$P_m(\lambda,\xi')$ and~$\xi'=\xi/\xii\in S^{n-1}$.
\end{lemma}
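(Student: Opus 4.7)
The plan is to rescale so that $\xii\to\infty$ becomes a small-parameter perturbation of a homogeneous equation at $\xi'=\xi/\xii$, and then to apply the implicit function theorem at each simple root of the unperturbed polynomial.

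Concretely, I would set $\rho=\xii$, $\xi'=\xi/\rho$, and $\mu=\lambda/\rho$. Since each $P_{m-j}$ is homogeneous of degree $m-j$, the equation $Q(\lambda,i\xi)=0$ rescales to
\[
F(\mu,\epsilon,\xi'):=\sum_{j=0}^{\ell} \epsilon^{j}\,P_{m-j}(\mu,i\xi')=0,\qquad \epsilon:=\rho^{-1}.
\]
At $\epsilon=0$ this reduces to $P_m(\mu,i\xi')=0$. Using the factorization $P_m(\lambda,\xi')=\prod_k(\lambda-a_k(\xi'))$ together with homogeneity of degree $m$ and leading coefficient $1$ in $\lambda$, one checks that $P_m(\mu,i\xi')=\prod_k(\mu-i a_k(\xi'))$, so the roots at $\epsilon=0$ are exactly $\mu=i a_k(\xi')$. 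Strict hyperbolicity of $P_m$ ensures these $m$ roots are simple, and compactness of $S^{n-1}$ makes them uniformly separated.

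Next I would apply the implicit function theorem to $F$ at each $(\mu,\epsilon)=(i a_k(\xi'),0)$. The partial derivative
\[
\partial_\mu F(i a_k(\xi'),0,\xi')=\partial_\lambda P_m(i a_k(\xi'),i\xi')=i^{m-1}\,\check{P}_{m,k}(a_k(\xi'),\xi')
\]
is nonzero by simplicity of $a_k(\xi')$, so there is a unique analytic branch $\mu_k(\epsilon,\xi')=i a_k(\xi')+\epsilon\,\nu_k(\xi')+\textit{o}(\epsilon)$ as $\epsilon\to 0^+$. Matching the coefficient of $\epsilon$ in the identity $F(\mu_k,\epsilon,\xi')\equiv 0$ gives
\[
\nu_k(\xi')=-\frac{P_{m-1}(i a_k(\xi'),i\xi')}{\partial_\lambda P_m(i a_k(\xi'),i\xi')}=-\frac{P_{m-1}(a_k(\xi'),\xi')}{\check{P}_{m,k}(a_k(\xi'),\xi')},
\]
where the second equality uses homogeneity of $P_{m-1}$ of degree $m-1$ to factor out a common $i^{m-1}$ from numerator and denominator. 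Unscaling, $\lambda_k(\xi)=\rho\,\mu_k(\rho^{-1},\xi')=i\xii\,a_k(\xi')+\nu_k(\xi')+\textit{o}(1)$, which is precisely \eqref{eq:lambdahighsimple}.

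The main obstacle I anticipate is making the labeling of $\lambda_k(\xi)$ and the $\textit{o}(1)$ remainder uniform in $\xi'\in S^{n-1}$, since the implicit function theorem is a priori only a local statement. This is handled by a quantitative version of the theorem combined with compactness of $S^{n-1}$: strict hyperbolicity of $P_m$ yields a uniform lower bound on $|a_k(\xi')-a_j(\xi')|$ for $j\neq k$, hence on $|\check{P}_{m,k}(a_k(\xi'),\xi')|$, which in turn gives a common radius of convergence $\epsilon_0>0$ and a uniform remainder estimate valid for all $\xi'\in S^{n-1}$. The global labeling $\lambda_1,\dots,\lambda_m$ is then inherited from any continuous labeling of the real roots $a_1(\xi'),\dots,a_m(\xi')$ on each connected chart of $S^{n-1}$.
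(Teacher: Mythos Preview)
Your proof is correct and follows essentially the same route as the paper: rescale via $\mu=\lambda/\xii$ to turn $Q(\lambda,i\xi)=0$ into a small-$\epsilon=\xii^{-1}$ perturbation of $P_m(\mu,i\xi')=0$, then read off the first-order correction at each simple root $ia_j(\xi')$. The only cosmetic difference is that you package the first-order step via the implicit function theorem, whereas the paper divides the rescaled equation by $\check{P}_{m,j}(\mu_j,i\xi')$ and substitutes $\mu_j=ia_j(\xi')+\textit{o}(1)$ directly; the homogeneity bookkeeping that cancels the common factor $i^{m-1}$ is identical.
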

The strict interlacing of~$P_{m-1}(\lambda,\xi')$ and~$P_m(\lambda,\xi')$ guarantees that
\begin{equation}\label{eq:stricthigh}
- \frac{P_{m-1}(a_j(\xi'),\xi')}{\check{P}_{m,j}(a_j(\xi'),\xi')} = - \frac{c_{m-1,0}\prod_{k=1}^{m-1}(a_j(\xi')-b_k(\xi'))}{\prod_{k\neq j}(a_j(\xi')-a_k(\xi'))} <0,
\end{equation}
in~\eqref{eq:lambdahighsimple}, where~$b_j(\xi')$ are the $m-1$ roots of~$P_{m-1}(\lambda,\xi')$. 
%
%
\begin{remark}
In the special case $P_{m-j}(\lambda,\xi)=\partial_\lambda^jP_m(\lambda,\xi)$, a slightly different version of Lemmas~\ref{lem:low} and~\ref{lem:high} has been previously obtained in~\cite[Lemmas~2 and~3]{DAJ17}.
\end{remark}

\subsection*{Asymptotic profile of the solution to~\eqref{eq:CPlin}}

Thanks to Lemma~\ref{lem:low}, assuming initial data in $L^1$, we may refine Theorem~\ref{thm:CPQ2} to describe the asymptotic profile of the solution to~\eqref{eq:CPlin} as $t\to\infty$. We define $v(t,\cdot)$ and $w(t,\cdot)$ in $L^2$ by means of
\begin{align}
\label{eq:v}
\hat v(t,\xi)
    & = M\,\sum_{j=1}^{m-1}\frac{e^{i\xii\,b_j(\xi')t+\xii^2\,\frac{P_m(b_j(\xi'),\xi')}{\check{P}_{m-1,j}(b_j(\xi'),\xi')}\,t}}{i^{m-2}\,\check{P}_{m-1,j}(b_j(\xi'),\xi')}\,,\\
\label{eq:w}
\hat w(t,\xi)
    & = M\,\sum_{j=1}^{m-2}\frac{e^{i\xii\,d_j(\xi')t+\xii^2\,\frac{P_{m-1}(d_j(\xi'),\xi')}{\check{P}_{m-2,j}(d_j(\xi'),\xi')}t}}{i^{m-3}\,\check{P}_{m-2,j}(d_j(\xi'),\xi')}\,,
\end{align}
where $M$ is as in~\eqref{eq:moment} and $\xi'=\xi/\xii$. Then the asymptotic profile of the solution to~\eqref{eq:CPlin} is described by $I_{m-2}v$ if~$Q=Q_1$ and by $I_{m-3}w$ if $Q=Q_2$, where $I_a$ denotes the Riesz potential of order $a$, i.e., $\mathfrak{F}(I_af)=\xii^{-a}\hat f$ in appropriate sense. The expressions of $\hat v$ and $\hat w$ appear complicated but they become much easier in several cases of interest, see Examples~\ref{ex:MGT}, \ref{ex:BC}, \ref{ex:electric}.
\begin{theorem}\label{thm:asymp}
Assume that~$P_m(\partial_t,\partial_x)$ and~$P_{m-1}(\partial_t,\partial_x)$ are strictly hyperbolic, that the polynomials $P_{m-1}(\lambda,\xi')$ and $P_{m}(\lambda,\xi')$ strictly interlace for any~$\xi'\in S^{n-1}$ and that
\[ u_j\in H^{s_0-j} \cap L^1 \qquad \text{for some $s_0\geq m-1$, for $j=0,\ldots,m-1$.} \]
Then the solution $u$ to Cauchy problem~\eqref{eq:CPlin} with $Q=Q_1$ satisfies the following estimate:
\begin{equation}
\label{eq:Q1asymp}
\big\|\partial_t^k \big( u(t,\cdot) - I_{m-2} v(t,\cdot)\big)\big\|_{\dot H^s} = \textit{o}\big((1+t)^{-\frac{n}4-\frac{k+s-(m-2)}2}\big),
\end{equation}
for $k+s\leq s_0$, such that $n/2+k+s>m-2$, where $M$ is as in~\eqref{eq:moment} and $v$ is as in~\eqref{eq:v}. Assume, moreover, that~$P_{m-2}(\partial_t,\partial_x)$ is strictly hyperbolic and that the polynomials~$P_{m-2}(\lambda,\xi')$ and~$P_{m-1}(\lambda,\xi')$ strictly interlace for any~$\xi'\in S^{n-1}$. Then the solution to Cauchy problem~\eqref{eq:CPlin} with $Q=Q_2$ satisfies the following estimate:
\begin{equation}
\label{eq:Q2asymp}
\big\|\partial_t^k \big( u(t,\cdot) - I_{m-3}w(t,\cdot)\big)\big\|_{\dot H^s} = \textit{o}\big((1+t)^{-\frac{n}4-\frac{k+s-(m-3)}2}\big),
\end{equation}
for $k+s\leq s_0$, such that $n/2+k+s>m-3$, where $M$ is as in~\eqref{eq:moment} and $w$ is as in~\eqref{eq:w}.
\end{theorem}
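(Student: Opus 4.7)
The plan is to refine the Fourier representation of the solution used in the proof of Theorem~\ref{thm:CPQ2}, together with the asymptotic expansions of Lemmas~\ref{lem:low}--\ref{lem:high}, to identify the leading-order diffusive behaviour. Writing $\hat u(t,\xi) = \sum_{k=1}^{m} c_k(\xi)\,e^{\lambda_k(\xi)\,t}$, with $\lambda_k(\xi)$ the roots of $Q(\lambda,i\xi)$ and $c_k(\xi)$ determined from the initial data through the Vandermonde system $\sum_k \lambda_k(\xi)^j c_k(\xi)=\hat u_j(\xi)$, $j=0,\ldots,m-1$, one obtains the Cramer--type formula
\[
c_k(\xi) = \sum_{l=0}^{m-1}\hat u_l(\xi)\,\frac{(-1)^{m-1-l}\,e_{m-1-l}(\{\lambda_i(\xi)\}_{i\neq k})}{\prod_{i\neq k}(\lambda_k(\xi)-\lambda_i(\xi))},
\]
where $e_s$ denotes the $s$-th elementary symmetric polynomial. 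I will then split the frequency domain into low ($\xii\leq\varepsilon$) and high ($\xii\geq\varepsilon$) parts, for $\varepsilon>0$ small.

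On $\{\xii\geq\varepsilon\}$, the strict stability of $Q(\lambda,i\xi)$ together with Lemma~\ref{lem:high} gives $\Re\lambda_k(\xi)\leq -c$ uniformly, so that $\hat u(t,\xi)$ decays exponentially; the auxiliary profile $\hat v$ in~\eqref{eq:v} (and $\hat w$ in~\eqref{eq:w}) also decays exponentially in this region, since~\eqref{eq:strictlow} yields $\Re(\xii^2\,P_m(b_j,\xi')/\check{P}_{m-1,j}(b_j,\xi'))\leq -c\varepsilon^2$. Multiplication by $\xii^{-(m-2)}$ (respectively $\xii^{-(m-3)}$) and integration in $\xi$ show that the contribution of $\{\xii\geq\varepsilon\}$ to~\eqref{eq:Q1asymp},~\eqref{eq:Q2asymp} is $O(e^{-c't})$, hence $o((1+t)^{-\alpha})$ for every $\alpha>0$.

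At low frequencies, I concentrate on $Q=Q_1$ (the case $Q=Q_2$ is parallel). By Lemma~\ref{lem:low} with $\ell=1$, for $j=1,\ldots,m-1$ the roots expand as $\lambda_j(\xi) = i\xii\,b_j(\xi') + \xii^2\gamma_j(\xi') + \textit{o}(\xii^2)$ with $\gamma_j(\xi')=P_m(b_j,\xi')/\check{P}_{m-1,j}(b_j,\xi')<0$, while $\lambda_m(\xi)\to -c_{m-1,0}$, so that $c_m(\xi)\,e^{\lambda_m(\xi)t}$ is uniformly exponentially small in $t$. Plugging the expansion into the Cramer formula for a parabolic index $k\leq m-1$, the denominator becomes $i^{m-2}\xii^{m-2}\check{P}_{m-1,k}(b_k(\xi'),\xi')(1+\textit{o}(1))$, and the symmetric-function numerator is evaluated at the limit configuration $(0,\ldots,0,-c_{m-1,0})$. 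Using $e_s(0,\ldots,0,c) = c\,\delta_{s,1}$ and the alternating signs, the contributions of $\hat u_l(\xi)$ for $l\leq m-3$ vanish at leading order, while those for $l=m-2, m-1$ combine into $\hat u_{m-1}(\xi)+c_{m-1,0}\hat u_{m-2}(\xi) = M + \textit{o}(1)$ as $\xi\to0$, by $L^1$ continuity of $\mathfrak{F}$ at the origin. This identifies the leading parabolic part of $\hat u(t,\xi)$ at low frequencies with $\xii^{-(m-2)}\hat v(t,\xi)$.

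It remains to estimate the error. After subtracting $\xii^{-(m-2)}\hat v(t,\xi)$, the low-frequency part of the Fourier integrand of $\|\partial_t^k(u-I_{m-2}v)\|_{\dot H^s}^2$ carries: (i) an $\textit{o}(1)$ correction from $\hat u_{m-1}(\xi)+c_{m-1,0}\hat u_{m-2}(\xi)-M$ and from the $\textit{o}(1)$ remainders in the asymptotic expansion of the denominator and numerator; (ii) the replacement of $\lambda_j(\xi)t$ by $(i\xii b_j+\xii^2\gamma_j)t$, which costs a factor $e^{\textit{o}(\xii^2)t}-1 = \textit{o}(1)$ on the diffusive scale $\xii^2 t\lesssim 1$ and is absorbed into the Gaussian decay $e^{\xii^2\Re\gamma_j\,t}$ for $\xii^2 t\gg 1$; and (iii) the exponentially decaying term $c_m(\xi)e^{\lambda_m(\xi)t}$, negligible versus any polynomial rate. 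Performing the parabolic rescaling $\xi=\eta/\sqrt t$, the integrand is dominated by a Gaussian in $\eta$ and multiplied by an $\textit{o}(1)$ factor, and dominated convergence yields the sharper $\textit{o}((1+t)^{-n/4-(k+s-(m-2))/2})$ rate. For $Q=Q_2$, Lemma~\ref{lem:low} with $\ell=2$ produces $m-2$ parabolic roots with expansion~\eqref{eq:lambdalow2} and two exponentially-decaying roots~\eqref{eq:lambdam-1m}; the symmetric-function calculation at the limit configuration $(0,\ldots,0,\lambda_{m-1}(0),\lambda_m(0))$ now picks out the three-term moment in~\eqref{eq:moment} via Vieta $e_1=-c_{m-1,0}$, $e_2=c_{m-2,0}$. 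The main technical obstacle is the careful bookkeeping of the Vandermonde numerator at small $\xi$: although the limiting symmetric functions vanish for $l\leq m-3$ (respectively $l\leq m-4$), the $\textit{o}(1)$ corrections at finite $\xi$ must be controlled uniformly on the diffusive scale to ensure that the dominated-convergence argument delivers a genuine little-$\textit{o}$ improvement, not merely a big-$O$ one.
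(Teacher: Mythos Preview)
Your approach is sound and parallels the paper's overall architecture (Fourier splitting, root asymptotics from Lemmas~\ref{lem:low}--\ref{lem:high}, identification of the leading low-frequency profile). The one substantive methodological difference is how the little-$o$ is extracted. The paper first proves a \emph{quantitative} improvement---an extra factor $t^{-1/2}$---for data in the weighted space $L^{1,1}=L^1((1+|x|)\,dx)$, using $|\hat f(\xi)-\hat f(0)|\lesssim\xii\,\|f\|_{L^{1,1}}$ by Lagrange's theorem, and then passes to general $L^1$ data by a density argument (as in~\cite{K00}). You instead rescale $\xi=\eta/\sqrt{t}$ and invoke dominated convergence directly, using only continuity of $\hat u_j$ at the origin. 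Both routes are standard and valid; the paper's yields the quantitative refinement recorded in Remark~\ref{rem:M0} as a by-product, while yours is shorter if one only wants the bare little-$o$ statement.

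On your flagged ``main technical obstacle'': the contributions of $\hat u_l$ for $l\leq m-3$ (respectively $l\leq m-4$ when $Q=Q_2$) do not need the dominated-convergence machinery at all, and trying to push them through it is what makes the bookkeeping look delicate. By Theorem~\ref{thm:CPQ2} applied with only these data nonzero, their contribution to $\|\partial_t^k u\|_{\dot H^s}$ is already $O((1+t)^{-n/4-(k+s-l)/2})$, which beats the target rate by a fixed positive power of $t$. The paper accordingly assumes $u_0=\ldots=u_{m-3}=0$ from the outset and reserves the careful error analysis (exponent replacement, denominator expansion, moment identification) for the two (respectively three) highest-order data, where $M$ actually appears. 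Adopting this reduction dissolves your concern: the remaining $o(1)$ factors are genuine pointwise-vanishing quantities dominated by the Gaussian envelope, and your DCT argument closes without further subtlety.
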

When $M\neq0$, Theorem~\ref{thm:asymp} describes the asymptotic profile of the solution to~\eqref{eq:CPlin}. In particular, it guarantees the optimality of the decay rate~\eqref{eq:estQ1} in Theorem~\ref{thm:CPQ2} when $q=1$ and $M\neq0$, due to
\begin{align*}
\| I_{m-2}v(t,\cdot)\|_{\dot H^s}
    & = \Big\| \xii^{s-(m-2)}\,\sum_{j=1}^{m-1}\frac{e^{\xii^2\,\frac{P_m(b_j(\xi'),\xi')}{\check{P}_{m-1,j}(b_j(\xi'),\xi')}t}}{\check{P}_{m-1,j}(b_j(\xi'),\xi')} \Big\|_{L^2}\\
    & = t^{-\frac{n}4-\frac{s-(m-2)}2}\,\Big\|\xii^{s-(m-2)}\,\sum_{j=1}^{m-1} \frac{e^{\xii^2\frac{P_m(b_j(\xi'),\xi')}{\check{P}_{m-1,j}(b_j(\xi'),\xi')}}}{\check{P}_{m-1,j}(b_j(\xi'),\xi')} \Big\|_{L^2},
\end{align*}
when~$n/2+s>m-2$. We proceed similarly for the decay rate in~\eqref{eq:estQ2}, using $w$. For $\partial_t^k v$ and $\partial_t^kw$, we may proceed in a similar way, but in this case we shall carefully treat the cases in which $b_j(\xi')=0$ for some $\xi'\in S^{n-1}$, due to the different behavior of $|\lambda_j^k|$.

On the other hand, when $M=0$, Theorem~\ref{thm:asymp} implies that the estimates in Theorem~\ref{thm:CPQ2} are not optimal. Indeed, in this case, the estimates may be improved, see Remark~\ref{rem:M0}, and the asymptotic profile of the solution to~\eqref{eq:CPlin} may be described by assuming further moment conditions.
\begin{remark}
In the setting of linear and semilinear damped wave equations, the asymptotic profile of the solution, as $t\to\infty$, is described by the fundamental solution to a heat equation or, more in general, to a diffusive problem, a property called \emph{diffusion phenomenon}, see~\cite{MN03, Ni, N11as, HM00, W14sp}. The diffusion phenomenon also appears in wave and evolution models with structural damping~\cite{DAE14JDE, DAE17NA, K00}. In the case of higher order equations, the diffusion phenomenon does not generally appear, due the presence of the oscillating terms $e^{i\xii b_j(\xi')t}$ and $e^{i\xii d_j(\xi')t}$, in~\eqref{eq:v} and~\eqref{eq:w}. However, in the special case $m=3$ and $P_1=\lambda$, Theorem~\ref{thm:asymp} for $Q=Q_2$ may be improved to show that
\[ \|\partial_t^k (u(t,\cdot)-w(t,\cdot))\|_{\dot H^s} = \textit{o}\big((1+t)^{-\frac{n}4-\frac{s}2-k}\big), \]
where $w$ is $M$ times the fundamental solution to
\[ \partial_t w - \sum_{|\alpha|=2}c_{0,\alpha}\partial_x^\alpha w=0. \]
In particular, each time derivative produces an additional decay~$t^{-1}$ in place of $t^{-\frac12}$.

We stress that $\sum_{|\alpha|=2}c_{0,\alpha}\xi^\alpha>0$ for any $\xi\neq0$, as a consequence of the strict hyperbolicity of $P_2(\partial_t,\partial_x)$.
\end{remark}


\section{Some examples of physical interest}\label{sec:examples}

In this section, we provide a few examples of models with physical interest to which Theorems~\ref{thm:CPQ2} and~\ref{thm:asymp} and, more in general, the results in \textsection~\ref{sec:asymp}, directly apply. However, it is clear that our results apply to any hyperbolic homogeneous equation of order $m\geq3$, to which a hyperbolic homogeneous operator of order $m-1$ and, possibly, a hyperbolic homogeneous operator of order $m-2$, are added to produce a dissipation. In \textsection\ref{sec:weak}, two other examples are provided to discuss the case of weak interlacing when the lowest order term has a double root.
\begin{example}\label{ex:MGT}
The Jordan-Moore-Gibson-Thompson equation (see~\cite{J,KLM,K}) is a hyperbolic model for acoustic waves introduced to preserve the finite speed of propagation. Its linear version, called Moore-Gibson-Thompson (MGT) equation, is $L(\partial_t,\partial_x)u=f$, with
\[ L(\partial_t,\partial_x) = \tau \partial_t^3 + \partial_t^2 - c^2\Delta -(\tau c^2+b)\Delta \partial_t, \]
where $\tau>0$ is the intrinsic relaxation time of the heat flux~\cite{CFO}, $c>0$ is the sound speed, and $b>0$ is the diffusivity of sound. As $\tau\to0$, the equation reduces to the linear Kuznetsov~\cite{Kunetsov} or Westervelt~\cite{Westervelt} equation, i.e., a wave equation with viscoelastic damping~\cite{Ponce, S00}:
\begin{equation}\label{eq:visco}
\partial_t^2u - c^2\Delta u -b\Delta \partial_tu=0.
\end{equation}
Setting $L(\partial_t,\partial_x)u=\tau Q_1(\partial_t,\partial_x)u$, this corresponds to fix
\begin{align*}
P_3(\lambda,\xi')
    & = \lambda^3-(c^2+b\tau^{-1})\lambda=(\lambda+\sqrt{c^2+b\tau^{-1}}) \lambda (\lambda-\sqrt{c^2+b\tau^{-1}}),\\
P_2(\lambda,\xi')
    & = \tau^{-1}(\lambda^2-c^2) = \tau^{-1}\,(\lambda+c)(\lambda-c).
\end{align*}
In particular, $P_3$ and $P_2$ strictly interlace for any $b>0$, since the roots of $P_2(\lambda,\xi')$ are $(b_1,b_2)=(-c,c)$, whereas the roots of $P_3(\lambda,\xi')$ are
\[ a_1=- \sqrt{c^2+\frac{b}{\tau}}, \quad a_2=0,\quad a_3= \sqrt{c^2+\frac{b}{\tau}}. \]
%
We may now compute
\[ P_3(\pm c,\xi')=\mp c b\tau^{-1}, \qquad \check{P}_{2,1}(-c,\xi')=-2c\tau^{-1},\quad \check{P}_{2,2}(c,\xi')=2c\tau^{-1}, \]
so that, by Lemma~\ref{lem:low}, the $3$ roots $\lambda_j(\xi)$ of $Q(\lambda,i\xi)$ have the following asymptotic behavior as $\xi\to0$:
\[ \lambda_3(0)=-\tau^{-1}, \qquad \lambda_{1,2}(\xi) = \pm ic\xii - \frac{b}2\,\xii^2 + \textit{o}(\xii^2). \]
%
%
We stress that $\lambda_{1,2}$ have the same asymptotic behaviors, as~$\xi\to0$, of the roots of the viscoelastic damped wave equation obtained setting $\tau=0$ in the MGT equation:
\[ \lambda_\pm (\xi) = 
-\frac{b \xii^2 \pm i\xii \sqrt{4c^2-b^2\xii^2}}{2} = \pm ic\xii-\frac{b}2\,\xii^2+\textit{o}(\xii^2), \quad \xii\ll 1. 
\]
%
%
%
On the other hand, $\lambda_3(0)\to-\infty$ as~$\tau\to0$. 
%
%
We may write the asymptotic profile of the solution to the MGT equation as $I_1v$, where
\begin{align*}
\mathfrak{F}(I_1v)(t,\xi)
    & = \xii^{-1}\hat v(t,\xi) = M\,\frac{e^{ic\xii t-\frac{b}2\xii^2 t}-e^{-ic\xii t-\frac{b}2\xii^2 t}}{2ic\tau^{-1}\xii}= M\tau\,\frac{\sin(c\xii t)}{c\xii}\,e^{-\frac{b}2\xii^2 t},\\
M\tau
    & = \int_{\R^n} \big( \tau\,u_2(x) + u_1(x) \big) \,dx.
\end{align*}
By Theorem~\ref{thm:asymp}, we find
\[ \big\|\partial_t^k \big( u(t,\cdot) - I_1v(t,\cdot)\big)\big\|_{\dot H^s} = \textit{o}\big((1+t)^{-\frac{n}4-\frac{k+s-1}2}\big),\]
for $n/2+k+s>1$. As $\tau\to0$, we find the same kind of result obtained for the wave equation with viscoelastic damping~\eqref{eq:visco} in~\cite{I14}.
\end{example}
\begin{example}\label{ex:BC}
The Kuznetsov and Westervelt equations can formally be regarded as a simplification of the Blackstock-Crighton (BC) equation~\cite{Blackstock}. One of the model of the linear BC equation is
\begin{equation}\label{eq:BC}
\partial_t(\partial_t^2-c^2\Delta-b\Delta \partial_t)u-a\Delta(\partial_t^2-c^2\Delta)u  = 0,
\end{equation}
where $a>0$ is the thermal conductivity, whereas $c>0$ and $b>0$ are the sound speed and the diffusivity of sound, as in the Kuznetsov, Westervelt and MGT equations in Example~\ref{ex:BC}. Replacing the first term in the BC equation, that is, the time-derivative of the wave equation with viscoelastic damping, by the time-derivative of the MGT equation, we obtain the fourth order hyperbolic equation $\tau Q_1(\partial_t,\partial_x)u=0$, where
\begin{align*}
P_4(\lambda,\xi')
    & = \lambda^4-(c^2+(a+b)\tau^{-1})\lambda^2+ac^2\tau^{-1}=(\lambda+a_4)(\lambda+a_3)(\lambda-a_3)(\lambda-a_4), \\
P_3(\lambda,\xi')
    & = \tau^{-1}\lambda(\lambda^2-c^2) = \tau^{-1}\,(\lambda+c)\lambda(\lambda-c),
\end{align*}
where the roots of $P_4(\lambda,\xi')$ are
\[ a_{3,4}=\sqrt{\frac{c^2+(a+b)\tau^{-1}\pm \sqrt{(c^2+(a+b)\tau^{-1})^2-4a\tau^{-1}c^2}}2}, \quad a_2=-a_3, \quad a_1=-a_4,\]
and the roots of $P_3(\lambda,\xi')$ are $(b_1,b_2,b_3)=(-c,0,c)$, for any $\xi'\in S^{n-1}$. The strict interlacing condition is verified if, and only if, $b>0$. 
%
%
%
%
Due to
\begin{align*}
& P_4(\pm c,\xi')=bc^2\tau^{-1}, \quad P_4(0,\xi')=ac^2\tau^{-1},\\
& \check{P}_{3,1}(-c,\xi')=\check{P}_{3,3}(c,\xi')=2c^2\tau^{-1}, \quad \check{P}_{3,2}(0,\xi')=-c^2\tau^{-1},
\end{align*}
by Lemma~\ref{lem:low}, the four roots $\lambda_j(\xi)$ of $Q(\lambda,i\xi)$ have the following asymptotic behavior, as $\xi\to0$:
\begin{align*}
\lambda_4(0)
    & = -\tau^{-1},\\
\lambda_{1,3}(\xi)
    & = \pm ic\xii - \frac{b}2\,\xii^2 + \textit{o}(\xii^2), \\
\lambda_2(\xi)
    & = -a\,\xii^2 + \textit{o}(\xii^2).
\end{align*}
In particular, exception given for $\lambda_2$, the asymptotic behavior is the same of the roots of the MGT equation, see Example~\ref{ex:MGT}.

Setting~$\tau=0$, we may compare those asymptotic behaviors with the asymptotic behaviors of the roots of the BC equation~\eqref{eq:BC}. %
Recalling (see for instance, Routh-Hurwitz theorem) that a polynomial of third order with positive coefficients
\[ q(z)=z^3+\alpha_2z^2+\alpha_1z+\alpha_0 \]
is strictly stable if, and only if, $\alpha_0 < \alpha_1\alpha_2$, %
%
we find that~$Q(\lambda,i\xi)$, with
\[ Q(\lambda,i\xi) = \lambda(\lambda^2+c^2\xii^2+b\xii^2\lambda)+a\xii^2(\lambda^2+c^2\xii^2), \]
as in~\eqref{eq:BC}, is strictly stable for any~$\xi\neq0$ if, and only if, $b>0$. Following as in the proof of Lemma~\ref{lem:low}, it is easy to show that the three roots~$\lambda_0(\xi)$ and~$\lambda_\pm(\xi)$ of $Q(\lambda,i\xi)$ verify
\[ \lambda_\pm(\xi)= \pm ic\xii - \frac{b}2\,\xii^2 + \textit{o}(\xii^2) = \lambda_{1,3}(\xi),\qquad \lambda_0(\xi)=-a\,\xii^2 + \textit{o}(\xii^2)=\lambda_2(\xi), \]
as~$\xi\to0$. On the other hand, $\lambda_4(0)\to-\infty$ as~$\tau\to0$.

%

We may write the asymptotic profile of the solution to the BC equation as $I_2v$, where
\begin{align*}
\mathfrak{F}(I_2v)(t,\xi)
    & = \xii^{-2}\hat v(t,\xi) = M\,\frac{e^{ic\xii t-\frac{b}2\xii^2 t}+e^{-ic\xii t-\frac{b}2\xii^2 t}}{-2c^2\tau^{-1}\xii^2} + M\,\frac{e^{-a\xii^2 t}}{c^2\tau^{-1}\xii^2} \\
    & = \frac{M\tau}{c^2\xii^2}\left(e^{-a\xii^2 t}-\cos(c\xii t)\,e^{-\frac{b}2\xii^2 t}\right)\,,\\
M\tau
    & = \int_{\R^n} \big( \tau\,u_3(x) + u_2(x) \big) \,dx.
\end{align*}
By Theorem~\ref{thm:asymp}, we get
\[ \big\|\partial_t^k \big( u(t,\cdot) - I_2v(t,\cdot)\big)\big\|_{\dot H^s} = \textit{o}\big((1+t)^{-\frac{n}4-\frac{k+s-2}2}\big),\]
for $n/2+k+s>2$.
%
%
\end{example}

\begin{example}\label{ex:electric}
Let us consider the coupled system of elastic waves with Maxwell equations in~$\R^3$ (see, for instance, \cite{DLM09}), i.e.
\begin{equation}\label{eq:MHD3dplus}
\begin{cases}
\partial_t^2 u - \mu \Delta u - (\mu+\nu) \nabla \div u + \gamma \curl E =0,\\
\partial_t E +\sigma E - \curl H -\gamma\curl \partial_t u=0,\\
\partial_t H + c^2\curl E =0,
\end{cases}
\end{equation}
where~$\curl = \nabla\times$ denotes the curl operator, $\mu,\nu$ are the Lam\`e constants and verify $\mu>0$, $\mu+\nu>0$ and, after normalization, $\gamma>0$ is the coupling constant, $\sigma>0$ is the electric conductivity (we replaced $J=\sigma E$, by Ohm's law, in Maxwell's equations) and $c^2>0$ is the inverse of the product of the electric permittivity and magnetic permeability. Recalling that $\div \curl =0$, we notice that~$\div E$ verifies the scalar equation
\begin{equation}\label{eq:divE}
\partial_t \div E + \sigma \div E =0.
\end{equation}
Deriving with respect to $t$ the second equation in~\eqref{eq:MHD3dplus} and replacing the third one in it, recalling that
\begin{equation}\label{eq:curlcurl}
\curl \curl f = - \Delta f + \nabla \div f,
\end{equation}
we get the second-order $6\times 6$ system
\begin{equation}\label{eq:MHD3hom}
\begin{cases}
\partial_t^2 u - \mu \Delta u - (\mu+\nu) \nabla \div u + \gamma \curl E =0,\\
\partial_t^2 E +\sigma \partial_t E -c^2\Delta E + c^2\nabla \div E - \gamma \curl \partial_t^2 u=0.
\end{cases}
\end{equation}
Applying the operator~$ \partial_t^2-\mu\Delta-(\mu+\nu)\nabla \div $ to the second equation in~\eqref{eq:MHD3hom}, recalling~\eqref{eq:curlcurl}, we get a fourth-order $3\times3$ system for $E$:
\begin{equation}\label{eq:Evector}
(\partial_t^2 - \mu \Delta - (\mu+\nu) \nabla \div)(\partial_t^2+\sigma\partial_t-c^2\Delta+c^2\nabla \div)E + \gamma^2\partial_t^2 (-\Delta+\nabla\div) E=0.
\end{equation}
Since $\div E$ verifies~\eqref{eq:divE}, applying the operator~$\partial_t+\sigma$ to~\eqref{eq:Evector}, we may remove every term where $\div E$ appears and obtain the fifth-order scalar equation given by
\[ (\partial_t+\sigma) \big( (\partial_t^2-\mu\Delta)(\partial_t^2+\sigma\partial_t-c^2\Delta)-\gamma^2\partial_t^2\Delta\big) E=0, \]
which every component of $E$ shall satisfy. The equation above is $Q_2(\partial_t,\partial_x)E=0$, where $Q_2=P_5+P_4+P_3$ and
\begin{align*}
P_5(\lambda,\xi)
    & = \lambda\,\big(\lambda^4-(\mu+c^2+\gamma^2)\lambda^2\xii^2+c^2\mu\xii^4\big), \\
P_4(\lambda,\xi)
    & = \sigma\,\big(2\lambda^4-(2\mu+c^2+\gamma^2)\lambda^2\xii^2+c^2\mu\xii^4\big), \\
P_3(\lambda,\xi)
    & = \sigma^2 \lambda (\lambda^2-\mu\xii^2).
\end{align*}
The roots of $P_3(\lambda,\xi')$ are $(d_1,d_2,d_3)=(-\sqrt{\mu},0,\sqrt{\mu})$, the roots of $P_4(\lambda,\xi')$ are
\[ b_{3,4} = \frac12\,\sqrt{2\mu+c^2+\gamma^2\pm \sqrt{(2\mu+c^2+\gamma^2)^2-8c^2\mu}}, \qquad b_{1,2}=-b_{3,4}, \]
and the roots of $P_5(\lambda,\xi')$ are
\[ a_{4,5} = \frac1{\sqrt{2}}\,\sqrt{\mu+c^2+\gamma^2\pm \sqrt{(\mu+c^2+\gamma^2)^2-4c^2\mu}}, \qquad a_3=0, \quad a_{1,2}=-a_{4,5}. \]
The strict interlacing condition of $P_3(\lambda,\xi')$ and $P_4(\lambda,\xi')$ is verified if $b_3<1<b_4$, that is,
\begin{align*}
2\mu+c^2+\gamma^2-\sqrt{(2\mu+c^2+\gamma^2)^2-8c^2\mu} < 4\mu < 2\mu+c^2+\gamma^2+\sqrt{(2\mu+c^2+\gamma^2)^2-8c^2\mu},
\end{align*}
which holds if, and only if,
\[ \big(2\mu-(c^2+\gamma^2)\big)^2 < (2\mu+c^2+\gamma^2)^2-8c^2\mu, \]
i.e., $\gamma^2>0$. With long but straightforward calculations, one may also check that $P_4(\lambda,\xi')$ and $P_5(\lambda,\xi')$ strictly interlace. 
Therefore, Theorems~\ref{thm:CPQ2} and~\ref{thm:asymp} apply. Due to
\begin{align*}
& P_4(\pm\sqrt{\mu},\xi')=-\sigma\mu\gamma^2, \qquad P_4(0,\xi')=\sigma c^2\mu, \\
& \check{P}_{3,3}(\sqrt{\mu},\xi')=2\sigma^2\mu, \quad \check{P}_{3,2}(0,\xi')=-\sigma^2\mu, \quad \check{P}_{3,1}(-\sqrt{\mu},\xi')=2\sigma^2\mu,
\end{align*}
we may compute
\[ \lambda_{4,5}(0)=-\sigma, \qquad \lambda_{1,3}(\xi)=\pm i\xii\sqrt{\mu}-\frac{\gamma^2}{2\sigma}\xii^2+\textit{o}(\xii^2), \qquad \lambda_2(\xi)=-\frac{c^2}\sigma\,\xii^2+\textit{o}(\xii^2), \]
as $\xi\to0$. Therefore, the asymptotic profile of $E(t,\cdot)$, for each of its components, is described by $I_2w$, where
\begin{align*}
\mathfrak{F}(I_2w)(t,\xi)
    & = \xii^{-2}\hat w(t,\xi) = -\,M\,e^{-\frac{\gamma^2}{2\sigma}\xii^2t}\,\frac{e^{i\xii\sqrt{\mu}\,t}+e^{-i\xii\sqrt{\mu}\,t}}{2\mu\sigma^2\xii^2} + M\,\frac{e^{-\frac{c^2}\sigma\,\xii^2t}}{\mu\sigma^2\xii^2}\\
    & = \frac{M}{\mu\sigma^2\xii^2}\,\left(e^{-\frac{c^2}\sigma\,\xii^2t} - \cos(\xii\sqrt{\mu}\,t)\,e^{-\frac{\gamma^2}{2\sigma}\xii^2t} \right),
\intertext{if}
M
    & = \int_{\R^3} \big(\partial_t^4 E(0,x)+\sigma \partial_t^3 E(0,x)+\sigma^2 \partial_t^2 E(0,x)\big)\,dx \neq0.
\end{align*}
Explicitly,
\[ \|\partial_t^k\big( E(t,\cdot)-I_2w(t,\cdot)\big)\|_{\dot H^s} =\textit{o}\big((1+t)^{-\frac34-\frac{k+s-2}2}\big),\]
for any $k+s>1/2$.
\end{example}



\section{Proofs of the asymptotic profiles and of Theorems~\ref{thm:CPQ2} and~\ref{thm:asymp}}\label{sec:proofs}

We will first prove the asymptotic behavior of the roots of $Q(\lambda,i\xi)$ as $\xi\to0$ and as $\xii\to\infty$.

\subsection*{Proof of Lemmas~\ref{lem:low} and~\ref{lem:high}}

\begin{proof}[Proof of Lemma~\ref{lem:low}]
First of all, we notice that
\[ Q(\lambda,0)=\lambda^{m-\ell}\sum_{j=0}^{\ell} c_{m-j,0}\lambda^{\ell-j}, \]
so that~$\ell$ roots of $Q(\lambda,i\xi)$ solve~\eqref{eq:lambda0} at~$\xi=0$, whereas~$\lambda_j(0)=0$, for $j=1,\ldots,m-\ell$. We fix~$\xi'\in S^{n-1}$, and we set~$\xi=\rho\xi'$, with~$\rho>0$ and $\mu_j(\rho,\xi')=\rho^{-1}\lambda_j(\rho\xi')$. Since~$P_{m-\ell}$ is homogeneous,
\[ P_{m-\ell}(\lambda_j,i\xi) = \rho^{m-\ell} \, P_{m-\ell}(\mu_j,i\xi') = \rho^{m-\ell}\,c_{m-\ell,0}\,\prod_{k=1}^{m-\ell}(\mu_j-i\beta_k(\xi')). \]
Then we may write
\begin{equation}\label{eq:mainlow}
0 = \xii^{-(m-\ell)}\,Q(\lambda_j,i\xi)= P_{m-\ell}(\mu_j,i\xi') + \rho\,P_{m-\ell+1}(\mu_j,i\xi')+\textit{o}(\rho), \quad j=1,\ldots,m-\ell.
\end{equation}
In particular, as~$\rho\to0$, we obtain~$\mu_j = i\beta_j + \textit{o}(1)$, that is,
\[ \lambda_j(\xi)= i\xii \beta_j(\xi') + \textit{o}(\xii),\qquad j=1,\ldots,m-\ell.\]
On the other hand, since the roots $\beta_j(\xi')$ are distinct, due to the strict hyperbolicity of $P_{m-\ell}(\partial_t,\partial_x)$, the quantity
\[ \check{P}_{m-\ell,j}(\mu_j,i\xi') = c_{m-\ell,0} \prod_{k\neq j} (\mu_j-i\beta_k(\xi')), \]
is nonzero for sufficiently small $\xii$. Dividing~\eqref{eq:mainlow} by $\check P_{m-\ell,j}(\mu_j(\xi'),i\xi')$, we obtain
\[ \mu_j - i\beta_j(\xi') = -\rho\,\frac{P_{m-\ell+1}(\mu_j,i\xi')}{\check P_{m-\ell,j}(\mu_j,i\xi')} + \textit{o}(\rho) = \rho\,\frac{P_{m-\ell+1}(\beta_j(\xi'),\xi')}{\check P_{m-\ell,j}(\beta_j(\xi'),\xi')} + \textit{o}(\rho). \]
In the last equality we used that $P_{m-\ell+1}$ is homogeneous of degree $m-\ell+1$ and $\check{P}_{m-\ell,j}$ is homogeneous of degree $m-\ell-1$ to remove the imaginary unit $i$, with a sign change.

%
Multiplying by $\xii$ the above asymptotic behavior, we conclude the proof.
\end{proof}
\begin{proof}[Proof of Lemma~\ref{lem:high}]
We fix~$\xi'\in S^{n-1}$, and we set~$\xi=\rho\xi'$, with~$\rho>0$. We also define $\mu_j(\rho,\xi')=\rho^{-1}\lambda_j(\xi)$ for $j=1,\ldots,m$. By the homogeneity of~$P_m(\lambda,\xi)$,
\[ P_m(\lambda_j,i\xi) = \rho^m \, P_m(\mu_j,i\xi') = \rho^m\,\prod_{k=1}^m(\mu_j-ia_k(\xi')). \]
Then we may write
\begin{equation}\label{eq:mainhigh}
0 = \xii^{-m}\,Q_2(\lambda_j,i\xi)= P_m(\mu_j,i\xi') + \rho^{-1}\,P_{m-1}(\mu_j,i\xi') + \textit{o}(\rho^{-1}).
\end{equation}
%
%
In particular, as~$\rho\to\infty$, we obtain~$\mu_j = ia_j(\xi') + \textit{o}(1)$, that is, $\lambda_j(\xi)= i\xii a_j(\xi') + \textit{o}(\xii)$. On the other hand, since the roots $a_j(\xi')$ are distinct, due to the strict hyperbolicity of $P_{m}$, the quantity
\[ \check{P}_{m,j}(\mu_j,i\xi') = \prod_{k\neq j} (\mu_j-ia_k(\xi')), \]
is nonzero for sufficiently large $\xii$. Dividing~\eqref{eq:mainhigh} by $\check P_{m,j}(\mu_j,i\xi')$, we obtain
\[ \mu_j - ia_j = -\rho^{-1}\,\frac{P_{m-1}(\mu_j,i\xi')}{\check P_{m,j}(\mu_j,i\xi')} + \textit{o}(\rho^{-1}) = -\rho^{-1}\,\frac{P_{m-1}(a_j,\xi')}{\check P_{m,j}(a_j(\xi'),\xi')} + \textit{o}(\rho^{-1}). \]
In the last equality we used that $P_{m-1}$ and $\check{P}_{m,j}$ are both homogeneous of degree $m-1$ to remove the imaginary unit $i$.

Multiplying by $\xii$ the above asymptotic behavior, we conclude the proof.
\end{proof}


\subsection*{Proof of Theorems~\ref{thm:CPQ2} and~\ref{thm:asymp}}


After performing the Fourier transform with respect to the space variable~$x$ in~\eqref{eq:CPlin}, $\hat u(t,\cdot)=\mathfrak{F}u(t,\cdot)$ is the solution to the ODE problem
\begin{equation}\label{eq:CPF}
\begin{cases}
Q(\partial_t,i\xi) \hat u =0, \quad t\geq 0,\ \xi\in\R^n,\\
\partial_t^j\hat u(0,x)=\hat u_j(\xi),\quad j=0,\ldots,m-1,
\end{cases}
\end{equation}
We may write an explicit representation of the solution~$\hat u$ to~\eqref{eq:CPF}, in terms of the complex-valued roots~$\lambda_1(\xi)$, \ldots, $\lambda_m(\xi)$, of the polynomial~$Q(\lambda,i\xi)$. We first consider $\xi\neq0$ such that the roots are all distinct. In this case,
\begin{equation}\label{eq:urepgen}
\hat u(t,\xi) = \sum_{j=1}^m \frac{\hat u_{m-1}-\hat u_{m-2}\sum_{k\neq j}\lambda_k 
+ \ldots + (-1)^{m-1} \hat u_0 \prod_{k\neq j}\lambda_k}{\prod_{k\neq j}(\lambda_j-\lambda_k)}\,e^{\lambda_jt}.
\end{equation}
%
%
%
It is clear that it is impossible to compute, in general, the roots of a polynomial of order $m$ with complex-valued coefficients. For our purpose, it is sufficient to now that the real parts of the roots are negative, as a consequence of Lemmas~\ref{lem:stable1} and~\ref{lem:stable2}, together with the knowledge of the asymptotic profile of the roots as~$\xi\to0$ (Lemma~\ref{lem:low}) and~$\xii\to\infty$ (Lemma~\ref{lem:high}). This will provide us with pointwise estimates for
\begin{equation}\label{eq:urepgenest}
|\partial_t^k\hat u(t,\xi)| \leq \sum_{j=1}^m \frac{|\hat u_{m-1}|+|\hat u_{m-2}|\sum_{k\neq j}|\lambda_k| 
+ \ldots + |\hat u_0| \prod_{k\neq j}|\lambda_k|}{\prod_{k\neq j}|\lambda_k-\lambda_j|}\,|\lambda_j|^k\,e^{\Re\lambda_jt},
\end{equation}
for any $\xi\neq0$.
\begin{remark}\label{rem:multiple}
If there is some multiple root, representation~\eqref{eq:urepgen} and estimate~\eqref{eq:urepgenest} are conveniently modified. 
For instance, let $m=3$ and assume that $\lambda_1(\xi)\neq\lambda_2(\xi)=\lambda_3(\xi)$, for some~$\xi\neq0$. Then~\eqref{eq:urepgen} is replaced by
\begin{equation}\label{eq:urepdouble}\begin{split}
\hat u(t,\xi)
    & = \frac{\hat u_2 - 2\lambda_3 \hat u_1 + \lambda_3^2\hat u_0}{(\lambda_3-\lambda_1)^2}\,e^{\lambda_1t} \\
    & \qquad - \frac{\hat u_2-2\lambda_3 \hat u_1 +\lambda_1(2\lambda_3-\lambda_1)\hat u_0}{(\lambda_1-\lambda_3)^2}\,e^{\lambda_3t} \\
    & \qquad - \frac{\hat u_2 - (\lambda_1+\lambda_3) \hat u_1+\lambda_1\lambda_3\hat u_0}{\lambda_1-\lambda_3}\, t\, e^{\lambda_3t}.
\end{split}\end{equation}
Formula~\eqref{eq:urepdouble} is the limit of~\eqref{eq:urepgen} as $\lambda_2\to\lambda_3$. Indeed, writing formula~\eqref{eq:urepgen} as
\begin{equation}\label{eq:urepdoublenear}\begin{split}
\hat u(t,\xi)
    & = \frac{\hat u_2 - (\lambda_2+\lambda_3) \hat u_1 + \lambda_2\lambda_3\hat u_0}{(\lambda_2-\lambda_1)(\lambda_3-\lambda_1)}\,e^{\lambda_1t}\\
    & \quad + \left(\frac{\hat u_2 - (\lambda_1+\lambda_3) \hat u_1 + \lambda_1\lambda_3\hat u_0}{(\lambda_1-\lambda_2)(\lambda_3-\lambda_2)}+\frac{\hat u_2 - (\lambda_1+\lambda_2) \hat u_1 + \lambda_1\lambda_2\hat u_0}{(\lambda_1-\lambda_3)(\lambda_2-\lambda_3)}\right)\,e^{\lambda_2t}\\
    & \quad + \frac{\hat u_2 - (\lambda_1+\lambda_2) \hat u_1 + \lambda_1\lambda_2\hat u_0}{\lambda_1-\lambda_3}\,\frac{e^{\lambda_3t}-e^{\lambda_2t}}{\lambda_2-\lambda_3},
\end{split}\end{equation}
rewriting the second term as
\[ - \frac{\hat u_2 - (\lambda_2+\lambda_3) \hat u_1 + \lambda_1(\lambda_3+\lambda_2-\lambda_1)\hat u_0}{(\lambda_1-\lambda_2)(\lambda_1-\lambda_3)}\,e^{\lambda_2t}, \]
and taking the limit as $\lambda_2\to\lambda_3$, we find~\eqref{eq:urepdouble}. To treat such a case, estimate~\eqref{eq:urepgenest} may be replaced by
\begin{align*}
|\hat u(t,\xi)|
    & \leq \frac{|\hat u_2|+|\lambda_2+\lambda_3|\,|\hat u_1| + |\lambda_2\lambda_3| |\hat u_0|}{|\lambda_2-\lambda_1|\,|\lambda_3-\lambda_1|}\,e^{\Re\lambda_1t}\\
    & \qquad + \frac{|\hat u_2|+|\lambda_2+\lambda_3|\,|\hat u_1| + |\lambda_1|\,|\lambda_3+\lambda_2-\lambda_1|\,|\hat u_0|}{|\lambda_1-\lambda_2|\,|\lambda_1-\lambda_3|}\,e^{\Re\lambda_2t}\\
    & \qquad + \frac{|\hat u_2|+|\lambda_1+\lambda_2|\,|\hat u_1| + |\lambda_1\lambda_2|\,|\hat u_0|}{|\lambda_1-\lambda_3|}\,t\,e^{\Re\lambda_3t}\,,
\end{align*}
and similarly for the time derivatives. For the sake of brevity, we omit a general formula for $m\geq3$ and multiple roots.
\end{remark}
We are now ready to prove Theorem~\ref{thm:CPQ2}.
\begin{proof}[Proof of Theorem~\ref{thm:CPQ2}]
The Cauchy problem~\eqref{eq:CPlin} is well-posed in $H^s$ since $P_m(\partial_t,\partial_x)$ is strictly hyperbolic, so we shall only prove the desired decay rate in~\eqref{eq:estQ1} and in~\eqref{eq:estQ2} as $t\to\infty$. Therefore, we assume $t\geq1$ in the following. We use Plancherel's theorem, Riemann-Lebesgue theorem and Hausdorff-Young inequality to estimate the $\dot H^s$ norm of $\partial_t^k u(t,\cdot)$, using pointwise estimates for $|\partial_t^k\hat u(t,\xi)|$.

Exception given for the case $Q=Q_2$ and $4c_{m-2,0}=c_{m-1,0}^2$, we may fix sufficiently small~$\delta>0$ and~$c>0$, such that the roots~$\lambda_j(\xi)$, described in Lemmas~\ref{lem:low} are distinct for~$0<\xii\leq\delta$, and, in view of \eqref{eq:strictlow}, for any~$\xii\leq\delta$, it holds:
\[ \Re\lambda_m(\xi)\leq -c, \qquad \Re\lambda_j(\xi)\leq -c\xii^2,\qquad j=1,\ldots,m-1, \]
if $Q=Q_1$, or
\[ \Re\lambda_{m-1,m}(\xi)\leq -c,\qquad \Re\lambda_j(\xi)\leq -c\xii^2, \qquad j=1,\ldots,m-2, \]
if $Q=Q_2$, for some $c>0$.

We first consider $Q=Q_1$. According to Lemma~\ref{lem:low}, we may estimate
\[ |\lambda_m(\xi)|\lesssim1, \qquad |\lambda_j(\xi)| \lesssim \xii, \qquad j=1,\ldots,m-1, \]
as well as
\begin{align*}
|\lambda_k(\xi)-\lambda_m(\xi)|
    & = c_{m-1,0}+\textit{o}(1) \gtrsim 1, \qquad k=1,\ldots,m-1,\\
|\lambda_k(\xi)-\lambda_j(\xi)|
    & =|b_k(\xi')-b_j(\xi')|\,\xii+\textit{o}(\xii)\gtrsim \xii,\qquad j,k=1,\ldots,m-1, \ k\neq j.
\end{align*}
As a consequence, we obtain
\begin{align*}
& \frac1{\prod_{k\neq m}|\lambda_k(\xi)-\lambda_m(\xi)|} \lesssim 1,\\
& \frac1{\prod_{k\neq j}|\lambda_k(\xi)-\lambda_j(\xi)|} \lesssim \xii^{-(m-2)}, \qquad j=1,\ldots,m-1,
\end{align*}
and
\begin{align*}
& \sum_{k\neq m}|\lambda_k(\xi)|=\sum_{k\neq m}|b_k(\xi)|+\textit{o}(\xii)\lesssim \xii, \qquad \ldots, \quad \prod_{k\neq m}|\lambda_k| \lesssim \xii^{m-1},\\
& \sum_{k\neq j}|\lambda_k|=c_{m-1,0}+\textit{o}(1)\lesssim 1, \qquad \ldots, \quad \prod_{k\neq j}|\lambda_k| \lesssim \xii^{m-2},\quad j=1,\ldots,m-1.
\end{align*}
%
%
%
Recalling~\eqref{eq:urepgen}, we get
\begin{align*}
|\partial_t^k\hat u(t,\xi)|
    & \lesssim \xii^{k-(m-2)}\,\big(|\hat u_{m-1}(\xi)|+|\hat u_{m-2}(\xi)|\big)\,e^{-c\xii^2t} + \sum_{j=0}^{m-3} \xii^{k-j}\,|\hat u_j(\xi)|\,e^{-c\xii^2t} \\
    & \qquad + \sum_{j=0}^{m-1} \xii^{m-1-j}\,|\hat u_j(\xi)|\,\,e^{-ct},
\end{align*}
for~$k\geq0$. 
%
%
If~$q=2$, we immediately obtain
\begin{align*}
\|\xii^s\partial_t^k \hat u(t,\cdot)\|_{L^2(\xii\leq\delta)}
    & \lesssim \big(\|u_{m-1}\|_{L^2}+\|u_{m-2}\|_{L^2}\big)\,\sup_{\xii\leq\delta} \xii^{k+s-(m-2)}\,e^{-c\xii^2t} \\
    & \qquad + \sum_{j=0}^{m-3} \|u_j\|_{L^2}\,\sup_{\xii\leq\delta} \xii^{k+s-j}\,e^{-c\xii^2t} + e^{-ct}\,\sum_{j=0}^{m-1} \|u_j\|_{L^2}\\
    & \lesssim t^{-\frac{k+s-(m-2)}2}\,\big(\|u_{m-1}\|_{L^2}+\|u_{m-2}\|_{L^2}\big)+ \sum_{j=0}^{m-3} t^{-\frac{k+s-j}2} \|u_j\|_{L^2},
\end{align*}
provided that $k+s\geq m-2$.

Let $q\in[1,2)$ and set $q'=q/(q-1)\in(2,\infty]$ and $r=1/2-1/q'=1/q-1/2$. By the change of variable~$\eta=\sqrt{t}\xi$ and by H\"older inequality, we easily get
\begin{align*}
\|\xii^s\partial_t^k\hat u(t,\cdot)\|_{L^2(\xii\leq\delta)}
    & \lesssim \Big( \int_{\xii\leq\delta} \xii^{r(k+s-(m-2))}\,e^{-rc\xii^2t}\,d\xi\Big)^{\frac1r} \,\big(\|\hat u_{m-1}\|_{L^{q'}}+\|\hat u_{m-2}\|_{L^{q'}}\big) \\
    & \qquad + \sum_{j=0}^{m-3} \Big( \int_{\xii\leq\delta} \xii^{r(k+s-j)}\,e^{-rc\xii^2t}\,d\xi\Big)^{\frac1r} \,\|\hat u_j\|_{L^{q'}} + e^{-ct}\,\sum_{j=0}^{m-1} \|u_j\|_{L^2} \\
    & \lesssim t^{-\frac{n}r-(k+s-(m-2))}\,\big(\|u_{m-1}\|_{L^q}+\|u_{m-2}\|_{L^q}\big)+\sum_{j=0}^{m-3}t^{-\frac{n}r-(k+s-j)}\,\|u_j\|_{L^q},
\end{align*}
provided that~$n/r+k+s>m-2$, so that the power~$\xii^{r(k+s-(m-2))}$ is integrable near~$\xi=0$.

We now consider $Q=Q_2$. Let~$j_0=m-1,m$ in the following, for brevity. According to Lemma~\ref{lem:low}, we may estimate
\[ |\lambda_{j_0}(\xi)|\lesssim1, \qquad |\lambda_j(\xi)| \lesssim \xii, \qquad j=1,\ldots,m-2, \]
as well as
\[ |\lambda_k(\xi)-\lambda_{j_0}(\xi)| \gtrsim 1, \qquad |\lambda_k(\xi)-\lambda_j(\xi)|\gtrsim \xii,\qquad j,k=1,\ldots,m-2, \ k\neq j. \]
Moreover, $|\lambda_m(\xi)-\lambda_{m-1}(\xi)| \gtrsim 1$, since we assumed $4c_{m-2,0}\neq c_{m-1,0}^2$. As a consequence, we obtain
\begin{align*}
& \frac1{\prod_{k\neq j_0}|\lambda_k-\lambda_{j_0}|} \lesssim 1, \\
& \frac1{\prod_{k\neq j}|\lambda_k-\lambda_j|} \lesssim \xii^{-(m-3)}, \qquad j=1,\ldots,m-2,
\end{align*}
and
\begin{align*}
& \sum_{k\neq j_0}|\lambda_k|\lesssim 1, \quad \sum_{k,\ell\neq j_0, k<\ell}|\lambda_k\lambda_\ell|\lesssim \xii, \quad \ldots,\quad \prod_{k\neq j_0}|\lambda_k| \lesssim \xii^{m-2},\\
& \sum_{k\neq j}|\lambda_k|\lesssim 1, \quad \sum_{k,\ell\neq j, k<\ell}|\lambda_k\lambda_\ell|\lesssim 1, \quad \ldots, \quad \prod_{k\neq j}|\lambda_k| \lesssim \xii^{m-3},\quad j=1,\ldots,m-2.
\end{align*}
Recalling~\eqref{eq:urepgen}, we get
\begin{align*}
|\partial_t^k\hat u(t,\xi)|
    & \lesssim \xii^{k-(m-3)}\,\big(|\hat u_{m-1}(\xi)|+|\hat u_{m-2}(\xi)|+|\hat u_{m-3}(\xi)|\big)\,e^{-c\xii^2t} + \sum_{j=0}^{m-4} \xii^{k-j}\,|\hat u_j(\xi)|\,e^{-c\xii^2t} \\
    & \qquad + |\hat u_{m-1}(\xi)|\,\,e^{-ct} + \sum_{j=0}^{m-2} \xii^{m-2-j}\,|\hat u_j(\xi)|\,\,e^{-ct},
\end{align*}
for~$k\geq0$. 
%
%
We proceed as we did for $Q_1$. If~$q=2$, we immediately obtain
\begin{align*}
\|\xii^s\partial_t^k \hat u(t,\cdot)\|_{L^2(\xii\leq\delta)}
    & \lesssim \big(\|u_{m-1}\|_{L^2}+\|u_{m-2}\|_{L^2}+\|u_{m-3}\|_{L^2}\big)\,\sup_{\xii\leq\delta} \xii^{k+s-(m-3)}\,e^{-c\xii^2t} \\
    & \qquad + \sum_{j=0}^{m-4} \|u_j\|_{L^2}\,\sup_{\xii\leq\delta} \xii^{k+s-j}\,e^{-c\xii^2t} + e^{-ct}\,\sum_{j=0}^{m-1} \|u_j\|_{L^2}\\
    & \lesssim t^{-\frac{k+s-(m-3)}2}\,\big(\|u_{m-1}\|_{L^2}+\|u_{m-2}\|_{L^2}+\|u_{m-3}\|_{L^2}\big)+ \sum_{j=0}^{m-4} t^{-\frac{k+s-j}2} \|u_j\|_{L^2},
\end{align*}
provided that $k+s\geq m-3$. Let $q\in[1,2)$ and set $q'=q/(q-1)\in(2,\infty]$ and $r=1/2-1/q'=1/q-1/2$. By the change of variable~$\eta=\sqrt{t}\xi$ and by H\"older inequality, we may easily get
\begin{align*}
\|\xii^s\partial_t^k\hat u(t,\cdot)\|_{L^2(\xii\leq\delta)}
    & \lesssim \Big( \int_{\xii\leq\delta} \xii^{r(k+s-(m-3))}\,e^{-rc\xii^2t}\,d\xi\Big)^{\frac1r} \,\big(\|\hat u_{m-1}\|_{L^{q'}}+\|\hat u_{m-2}\|_{L^{q'}}+\|\hat u_{m-3}\|_{L^{q'}}\big) \\
    & \qquad + \sum_{j=0}^{m-4} \Big( \int_{\xii\leq\delta} \xii^{r(k+s-j)}\,e^{-rc\xii^2t}\,d\xi\Big)^{\frac1r} \,\|\hat u_j\|_{L^{q'}} + e^{-ct}\,\sum_{j=0}^{m-1} \|u_j\|_{L^2} \\
    & \lesssim t^{-\frac{n}r-(k+s-(m-3))}\,\big(\|u_{m-1}\|_{L^q}+\|u_{m-2}\|_{L^q}+\|u_{m-3}\|_{L^q}\big)+\sum_{j=0}^{m-4}t^{-\frac{n}r-(k+s-j)}\,\|u_j\|_{L^q},
\end{align*}
provided that~$n/r+k+s>m-3$, so that the power~$\xii^{r(k+s-(m-3))}$ is integrable near~$\xi=0$.

\bigskip

We now consider the high frequencies. Since $P_m(\partial_t,\partial_x)$ is strictly hyperbolic, there exist~$M>0$ and~$c>0$ such that the roots~$\lambda_j$ in Lemma~\ref{lem:high} are distinct for any~$\xii\geq M$, and, for any~$\xii\geq M$, it holds~$\Re\lambda_j(\xi)\leq -c$ for some $c>0$. Moreover, we may estimate
\[ |\lambda_j(\xi)| \lesssim \xii,\qquad |\lambda_j(\xi)-\lambda_\ell(\xi)|=|a_j(\xi')-a_\ell(\xi')|\,\xii+\textit{o}(\xii) \gtrsim \xii,\quad j\neq \ell. \]
Recalling~\eqref{eq:urepgen}, we may now estimate
\[ |\partial_t^k\hat u(t,\xi)| \lesssim \sum_{j=0}^{m-1} \xii^{k-j} |\hat u_j(\xi)|\,e^{-ct}, \]
for~$k\geq0$. We immediately obtain that
\[ \Big(\int_{\xii\geq M} \xii^{2s} |\partial_t^k\hat u(t,\xi)|^2\,d\xi\Big)^{\frac12} \lesssim e^{-ct}\,\sum_{j=0}^{m-1} \|u_j\|_{H^{s+k-j}}. \]
At intermediate frequencies, $\delta\leq \xii\leq M$, we shall not worry about regularity of the initial data or about the decay rate, since~$\Re\lambda(\xi)\leq -c$, for some~$c>0$, uniformly in a compact subset of~$\R^n\setminus\{0\}$. The only difference with the previous analysis is that the roots~$\lambda_j$ may coincide for some~$\xi$. However, this possibility has no influence on the exponential decay rate. Indeed, following as in Remark~\ref{rem:multiple}, if at most $p$ roots coincide at some point $\xi\neq0$, we get the estimate
\[ |\partial_t^k \hat u(t,\xi)| \lesssim (1+t)^{p-1}\,e^{-ct}\sum_{j=0}^{m-1}|\hat u_j(\xi)|, \]
%
but it is clear that~$(1+t)^{m-1}\,e^{-ct} \leq C e^{-c't}$, for any fixed~$c'\in(0,c)$, for some $C>0$, so that the decay remains exponential even if multiple roots are present.



Summarizing, we may unify the estimate at high and intermediate frequencies, getting
\[ \Big(\int_{\xii\geq \delta} \xii^{2s} |\partial_t^k\hat u(t,\xi)|^2\,d\xi\Big)^{\frac12} \lesssim e^{-c't}\,\sum_{j=0}^{m-1} \|u_j\|_{H^{s+k-j}}. \]
and this concludes the proof of~\eqref{eq:estQ1} and~\eqref{eq:estQ2}.

We now go back to the case that we excluded, $4c_{m-2,0}=c_{m-1,0}^2$. In this case, we do not rely on~\eqref{eq:urepgen} for $\lambda_{m-1}$ and $\lambda_m$ near $\xi=0$, since $\lambda_m(0)=\lambda_{m-1}(0)=-c_{m-1,0}/2$. However, taking into account of Remark~\ref{rem:multiple}, the only difference is that $e^{-ct}$ is replaced by $(1+t)\,e^{-ct}$ in the estimate of $\hat u(t,\xi)$ related to these two roots, but as seen before, this is not a big deal, since $(1+t)\,e^{-ct} \leq C e^{-c't}$, for any fixed~$c'\in(0,c)$, for some $C>0$. This concludes the proof.
%
\end{proof}


With minor modifications in the first part of the proof of Theorem~\ref{thm:CPQ2}, we may prove Theorem~\ref{thm:asymp}.
\begin{proof}[Proof of Theorem~\ref{thm:asymp}]
We prove Theorem~\ref{thm:asymp} for $Q=Q_1$, being the proof for $Q=Q_2$ completely analogous.

In view of Theorem~\ref{thm:CPQ2}, we may assume with no loss of generality that $u_0=\ldots=u_{m-3}=0$. We claim that if
\[ u_{m-1},u_{m-2}\in L^{1,1}=L^1((1+|x|) dx), \]
then for sufficiently small $\delta>0$ and for any $t\geq1$, we have the improved estimate:
\begin{equation}\label{claim:11}
\big\|\xii^s\partial_t^k\big(\hat u(t,\xi)-\xii^{-(m-2)}\hat v(t,\xi)\big)\big\|_{L^2(\xii\leq\delta)} \lesssim t^{-\frac{n}4-(k+s-(m-2))-\frac12}\,\big(\|u_{m-1}\|_{L^{1,1}}+\|u_{m-2}\|_{L^{1,1}}\big),
\end{equation}
where an additional power $t^{-\frac12}$ appears, with respect to the decay rate in~\eqref{eq:estQ1}, provided that~$n/2+1+k+s>m-2$. By claim~\eqref{claim:11}, the proof of Theorem~\ref{thm:asymp} follows by a density argument, as in~\cite[Lemma 3.2]{K00}. Namely, for given $u_{m-1},u_{m-2}\in L^1$, for any $\varepsilon$, we choose $u_{m-1}^\varepsilon,u_{m-2}^\varepsilon\in L^{1,1}$ such that
\[ \|u_{m-1}-u_{m-1}^\varepsilon\|_{L^1}+\|u_{m-2}-u_{m-2}^\varepsilon\|_{L^1}\leq\varepsilon.\]
By combining Theorem~\ref{thm:CPQ2} with initial data $u_j-u_j^\varepsilon$ and claim~\eqref{claim:11}, for any sufficiently large $T_\varepsilon$, we derive
\begin{align*}
& t^{\frac{n}4+(k+s-(m-2))}\,\big\|\xii^s\partial_t^k\big(\hat u(t,\xi)-\xii^{-(m-2)}\hat v(t,\xi)\big)\big\|_{L^2(\xii\leq\delta)}\\
& \qquad \lesssim t^{-\frac12}\,\big(\|u_{m-1}^\varepsilon\|_{L^{1,1}}+\|u_{m-2}^\varepsilon\|_{L^{1,1}}\big) + \varepsilon\leq 2\varepsilon, \qquad t\geq T_\varepsilon;
\end{align*}
hence, the proof of Theorem~\ref{thm:asymp} follows. We now prove claim~\eqref{claim:11}. If $f\in L^{1,1}$, then $\hat f\in \mathcal C^1$; in particular, $|\hat f(\xi)-\hat f(0)|\lesssim\xii\,\|\hat f\|_{L^{1,1}}$, by Lagrange theorem. Due to
\[ M= \hat u_{m-1}(0) + c_{m-1,0}\hat u_{m-2}(0), \]
for any $j=1,\ldots,m-1$, we may estimate
\[ \big| \hat u_{m-1}(\xi) - \hat u_{m-2}(\xi)\sum_{k\neq j}\lambda_k(\xi) -M\big| \lesssim \xii\,\big(\|u_{m-1}\|_{L^{1,1}}+\|u_{m-2}\|_{L^{1,1}}\big), \]
as $\xi\to0$. Indeed, $|\lambda_k(\xi)|\lesssim \xii$ for $k\neq m$ and $\lambda_m(0)=-c_{m-1,0}$. On the other hand, for any $j=1,\ldots,m-1$, we may estimate
\begin{align*}
& \Big| e^{\lambda_jt}-e^{i\xii\,b_j(\xi')t+\xii^2\,\frac{P_m(b_j(\xi'),\xi')}{\check{P}_{m-1,j}(b_j(\xi'),\xi')}t}\Big| \lesssim t\xii^3\,e^{-c\xii^2t}, \\
& \left| \frac{1}{(i\xii)^{m-2}\check{P}_{m-1,j}(b_j(\xi'),\xi')}-\frac1{\prod_{k\neq j}(\lambda_j(\xi)-\lambda_k(\xi))}\right| \\
& \qquad = \left| \frac1{(i\xii)^{m-2}c_{m-1,0}\prod_{k\neq j}(b_j(\xi')-b_k(\xi'))}-\frac1{\prod_{k\neq j}(\lambda_j(\xi)-\lambda_k(\xi))}\right| \lesssim \xii^{1-(m-2)},
\end{align*}
where we replaced
\[ \check{P}_{m-1,j}(b_j(\xi'),\xi') = c_{m-1,0}\,\prod_{k\neq j}(b_j(\xi')-b_k(\xi')), \]
and we used Lemma~\ref{lem:low} to estimate
\begin{align*}
& |\lambda_j(\xi)-\lambda_m(\xi)-c_{m-1,0}|\lesssim \xii,\\
& |\lambda_j(\xi)-\lambda_k(\xi)-i\xii (b_j(\xi')-b_k(\xi'))|\lesssim \xii^2, \qquad k=1,\ldots,m-2, \ k\neq j.
\end{align*}
Therefore, for $\xii\leq\delta$, we get the improved estimate
\[ \big|\partial_t^k \big(\hat u(t,\xi)-\xii^{-(m-2)}\hat v(t,\xi)\big) \big|\lesssim \xii^{1+k-(m-2)}\,(1+t\xii^2)\,e^{-c\xii^2t}\,\big(\|u_{m-1}\|_{L^{1,1}}+\|u_{m-2}\|_{L^{1,1}}\big), \]
from which~\eqref{claim:11} follows, as in the proof of Theorem~\ref{thm:CPQ2}. We mention that when $Q=Q_2$,
\begin{align*}
& \left| \frac{1}{(i\xii)^{m-3}\check{P}_{m-2,j}(d_j(\xi'),\xi')}-\frac1{\prod_{k\neq j}(\lambda_j(\xi)-\lambda_k(\xi))}\right| \\
& \qquad = \left| \frac1{(i\xii)^{m-3}c_{m-2,0}\prod_{k\neq j}(d_j(\xi')-d_k(\xi'))}-\frac1{\prod_{k\neq j}(\lambda_j(\xi)-\lambda_k(\xi))}\right| \lesssim \xii^{1-(m-3)},
\end{align*}
for $j=1,\ldots,m-2$, where we estimated
\[ |(\lambda_j-\lambda_m)(\lambda_j-\lambda_{m-1})-c_{m-2,0}| \lesssim \xii. \]
\end{proof}
\begin{remark}\label{rem:M0}
The proof of Theorem~\ref{thm:asymp} shows that, when $M=0$, estimates~\eqref{eq:estQ1} and~\eqref{eq:estQ2} for $q=1$ may be improved, assuming some initial data in $L^{1,1}$. In particular, \eqref{eq:estQ1} is improved to
\begin{equation}
\label{eq:estQ1M0}\begin{split}
\|\partial_t^ku(t,\cdot)\|_{\dot H^s}
    & \leq C\,(1+t)^{-\frac{n}4-\frac{k+s-(m-3)}2}\,\sum_{j=m-2,m-1}\big(\|u_j\|_{L^{1,1}}+\|u_j\|_{H^{k+s-j}}\big) \\
    & \qquad + C\,\sum_{j=0}^{m-3}(1+t)^{-\frac{n}4-\frac{k+s-j}2}\,\big(\|u_j\|_{L^1}+\|u_j\|_{H^{k+s-j}}\big),
\end{split}\end{equation}
for any $k+s\leq s_0$, such that $n/2+k+s>m-3$, and \eqref{eq:estQ2} is improved to
\begin{equation}
\label{eq:estQ2M0}\begin{split}
\|\partial_t^k u(t,\cdot)\|_{\dot H^s}
    & \leq C\,(1+t)^{-\frac{n}4-\frac{k+s-(m-4)}2}\,\sum_{j=m-3}^{m-1}\big(\|u_j\|_{L^{1,1}}+\|u_j\|_{H^{k+s-j}}\big)\\
     & \qquad + C\,\sum_{j=0}^{m-4}(1+t)^{-\frac{n}4-\frac{k+s-j}2}\,\big(\|u_j\|_{L^1}+\|u_j\|_{H^{k+s-j}}\big),
\end{split}
\end{equation}
for any $k+s\leq s_0$, such that $n/2+k+s>m-4$.
\end{remark}
\begin{remark}\label{rem:genQ}
In this paper, the choice to consider one or two lower order homogeneous hyperbolic operators is due to the possibility to use Lemmas~\ref{lem:stable1} and~\ref{lem:stable2} to express a condition equivalent to the strict stability of $Q(\lambda,i\xi)$. If one already knows that a more general operator $Q$ as in~\eqref{eq:Q} verifies the property that $Q(\lambda,i\xi)$ is strictly stable (as in the next Example~\ref{ex:3}), Lemmas~\ref{lem:low} and Lemma~\ref{lem:high} may be applied to obtain a result analogous to Theorems~\ref{thm:CPQ2} and~\ref{thm:asymp} for the solution to~\eqref{eq:CPlin}. For instance, the strict stability of $Q(\lambda,i\xi)$ could be checked by using Routh-Hurwitz criteria if all coefficients of $Q(\lambda,i\xi)$ are real.
\end{remark}
\begin{example}\label{ex:3}
Let us define
\[ Q(\partial_t,\partial_x)=\sum_{j=0}^3 P_{m-j}(\partial_t,\partial_x). \]
By Hermite--Biehler theorem, $Q(\lambda,i\xi)$ is strictly stable if, and only if, $P_{m-1}(\lambda,\xi)-P_{m-3}(\lambda,\xi)$ and $P_m(\lambda,\xi)-P_{m-2}(\lambda,\xi)$ strictly interlace, for any~$\xi\neq0$. Letting $\xi\to0$, we find the necessary condition that $c_{m-3,0}\leq c_{m-2,0}\,c_{m-1,0}$, which correspond to the weak stability (i.e., the real part of the roots is nonpositive) of
\[ Q(\lambda,0)=\lambda^{m-3}(\lambda^3+c_{m-1,0}\lambda^2+c_{m-2,0}\lambda+c_{m-3,0}).\]
We stress that the strict stability of $Q(\lambda,i\xi)$ is no longer only dependent on the roots of the homogeneous polynomials $P_{m-j}(\lambda,\xi')$, but it also involves the multiplicative coefficients. This is due to the fact that the polynomial has more than two lower order terms.

We provide an easy example, for which we may directly check the strict stability of $Q(\lambda,i\xi)$. We fix:
\begin{align*}
P_4(\partial_t,\partial_x)
    & = \partial_t^2 (\partial_t^2-a^2\Delta),\\
P_3(\partial_t,\partial_x)
    & = c_{3}\,\partial_t (\partial_t^2-a^2\Delta),\\
P_2(\partial_t,\partial_x)
    & = c_{2}\,(\partial_t^2-b^2\Delta),\\
P_1(\partial_t,\partial_x)
    & = c_{1}\,\partial_t,
\end{align*}
where $c_1,c_2,c_3,a,b>0$. Due to
\begin{align*}
P_4(\lambda,\xi)-P_2(\lambda,\xi)
    & = \lambda^4 - (c_2+a^2\xii^2) \lambda^2 + c_2b^2\xii^2, \\
P_3(\lambda,\xi)-P_1(\lambda,\xi)
    & = c_3 \lambda^3 - (c_1+c_3a^2\xii^2)\lambda,
\end{align*}
by straightforward computation, $P_3(\lambda,\xi)-P_1(\lambda,\xi)$ and $P_4(\lambda,\xi)-P_2(\lambda,\xi)$ strictly interlace for any~$\xi\neq0$ if, and only if, %
%
%
$c_1<c_2c_3$ and
\[ b^2 < \left(1-\frac{c_1}{c_2c_3}\right)\,a^2. \]
Applying Lemma~\ref{lem:low}, we find that $\lambda_2(0)$, $\lambda_3(0)$, $\lambda_4(0)$ are the solutions to $\lambda^3+c_3\lambda^2+c_2\lambda+c_1=0$, whose real parts are negative, whereas
\[ \lambda_1(\xi)= - \frac{c_2\,b^2}{c_1}\,\xii^2  + \textit{o}(\xii^2), \]
as~$\xi\to0$. It is not difficult to extend Theorem~\ref{thm:asymp} and show that the asymptotic profile of the solution to~\eqref{eq:CPlin}, with~$Q$ as in this example, is described by~$M/c_1$ times the fundamental solution to the heat equation
\[ c_1 v_t -c_2\,b^2\,\Delta v=0,\]
where
\[ M=\int_{\R^n} \big(u_3(x)+c_3u_2(x)+c_2u_1(x)+c_1u_0(x)\big)\,dx. \]
More precisely, the estimate
\[ \big\|\partial_t^k \big( u(t,\cdot) - v(t,\cdot)\big)\big\|_{\dot H^s} = \textit{o}\big((1+t)^{-\frac{n}4-\frac{s}2-k}\big), \]
holds.
\end{example}



\section{The influence of weak interlacing polynomials}\label{sec:weak}

In the first part of this section, we discuss what happens if the assumption of strict interlacing of the polynomials $P_{m-2}(\lambda,\xi')$ and $P_{m-1}(\lambda,\xi')$, as in~\eqref{eq:interlace}, is weakened to non strict interlacing, as in~\eqref{eq:weakinterlace}, for some~$\xi'\in S^{n-1}$. In the second part of this section, we carry on the same analysis for the interlacing of the polynomials $P_{m-1}(\lambda,\xi')$ and $P_{m}(\lambda,\xi')$. We use the same notation introduced in \textsection~\ref{sec:asymp}.

We stress that dropping the strict interlacing of the polynomials, we may also consider the case in which $P_m(\partial_t,\partial_x)$ and $P_{m-2}(\partial_t,\partial_x)$ are weakly hyperbolic. However, we cannot drop the assumption of strict hyperbolicity of $P_{m-1}(\partial_t,\partial_x)$, otherwise we lose the strict stability of $Q_2(\lambda,i\xi)$, see Lemma~\ref{lem:stable2}. More precisely, $P_m(\lambda,\xi')$ and $P_{m-2}(\lambda,\xi')$ may admit double roots for some $\xi'\in S^{n-1}$, since roots with multiplicity larger than two are prevented by the weak interlacing assumption with the strictly hyperbolic polynomial~$P_{m-1}(\lambda,\xi')$. However, a double root of $P_{m}(\lambda,\xi')$ or of $P_{m-2}(\lambda,\xi')$ is also a (simple) root of $P_{m-1}(\lambda,\xi')$, as a consequence of the interlacing condition:
\begin{align*}
a_j(\xi')=a_{j+1}(\xi') & \Rightarrow a_j(\xi')=b_j(\xi')=a_{j+1}(\xi'),\\
d_j(\xi')=d_{j+1}(\xi') & \Rightarrow d_j(\xi')=b_{j+1}(\xi')=d_{j+1}(\xi').
\end{align*}
If we allow the non strict interlacing of the polynomials $P_{m-2}(\lambda,\xi')$ and $P_{m-1}(\lambda,\xi')$ for some $\xi'\in S^{n-1}$, then Lemma~\ref{lem:low} for $Q=Q_2$ is generalized as follows.
\begin{lemma}\label{lem:low2weak}
Let~$P_{m-2}(\partial_t,\partial_x)$ and $P_m(\partial_t,\partial_x)$ be (possibly weakly) hyperbolic and $P_{m-1}(\partial_t,\partial_x)$ be strictly hyperbolic. Assume that $P_{m-2}(\lambda,\xi')$ and~$P_{m-1}(\lambda,\xi')$ interlace and that $P_{m-1}(\lambda,\xi')$ and~$P_m(\lambda,\xi')$ interlace, for any $\xi'\in S^{n-1}$, and that there is no $(\lambda,\xi')\in\R\times S^{n-1}$ such that $P_{m-2}(\lambda,\xi')=P_{m-1}(\lambda,\xi')=P_m(\lambda,\xi')$. Let~$d_j(\xi')$ be the roots of $P_{m-2}(\lambda,\xi')$, $\xi'\in S^{n-1}$.

Then we may label the $m$ roots~$\lambda_j(\xi)$, $\xi\in\R^n$, of~$Q_2(\lambda,i\xi)$, in such a way that $\lambda_{m-1}$ and $\lambda_m$ are as in~\eqref{eq:lambdam-1m}, and $\lambda_j(\xi)$, $j=1,\ldots,m-2$, are described as follows, where we put $\xi'=\xi/\xii$.
\begin{enumerate}[(i)]
\item If $P_{m-1}(d_j(\xi'),\xi')\neq0$ and $\check{P}_{m-2,j}(d_j(\xi'),\xi')\neq0$, that is, $d_j(\xi')$ is a simple root of $P_{m-2}(\lambda,\xi')$ and it is not a root of $P_{m-1}(\lambda,\xi')$, then $\lambda_j(\xi)$ is as in~\eqref{eq:lambdalow2};
\item\label{en:lows} If $P_{m-1}(d_j(\xi'),\xi')=0$ and $\check{P}_{m-2,j}(d_j(\xi'),\xi')\neq0$, that is, $d_j(\xi')$ is a simple root of both $P_{m-1}(\lambda,\xi')$ and $P_{m-2}(\lambda,\xi')$, then
\begin{equation}\label{eq:asymplows}
\lambda_j(\xi) = i\xii d_j(\xi') + i\,\xii^3\,\frac{P_m(d_j(\xi'),\xi')}{\check{P}_{m-2,j}(d_j(\xi'),\xi')} + \xii^4\,\frac{P_m(d_j(\xi'),\xi')\,\tilde P_{m-1}(d_j(\xi'),\xi')}{\big(\check{P}_{m-2,j}(d_j(\xi'),\xi')\big)^2} + \textit{o}(\xii^4),
\end{equation}
as $\xi\to0$, where
\begin{equation}\label{eq:Ptilde}
\tilde P_{m-1}(\lambda,\xi')=\begin{cases}
\check{P}_{m-1,j}(\lambda,\xi') & \text{if $d_j(\xi')=b_j(\xi')$,}\\
\check{P}_{m-1,j+1}(\lambda,\xi') & \text{if $d_j(\xi')=b_{j+1}(\xi')$.}
\end{cases}
\end{equation}
\item\label{en:lowd} If $d_j(\xi')=b_{j+1}(\xi')=d_{j+1}(\xi')$, that is, $d_j(\xi')$ is a simple root of $P_{m-1}(\lambda,\xi')$ and a double root of $P_{m-2}(\lambda,\xi')$, then
\begin{equation}\label{eq:asymplowd}
\lambda_{j,j+1}(\xi) = i\xii d_j(\xi') + \xii^2\,\kappa_\pm(\xi') + \textit{o}(\xii^2),
\end{equation}
as $\xi\to0$, where $\kappa_\pm(\xi')$ are the two solutions to
\begin{equation}\label{eq:kappalow}
\kappa^2\check{P}_{m-2,j,j+1}(d_j(\xi'),\xi') - \kappa\,\check{P}_{m-1,j+1}(d_j(\xi'),\xi') + P_m(d_j(\xi'),\xi').
\end{equation}
%
%
%
\end{enumerate}
\end{lemma}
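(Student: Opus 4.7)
Following the homogeneity substitution used in the proof of Lemma~\ref{lem:low}, I would fix $\xi'\in S^{n-1}$, set $\xi=\rho\xi'$ with $\rho=\xii$, and $\mu_j(\rho,\xi')=\rho^{-1}\lambda_j(\rho\xi')$. Multiplying $Q_2(\lambda_j,i\xi)=0$ by $\rho^{-(m-2)}$ and invoking the homogeneity of each $P_{m-j}$ yields the master identity
\begin{equation}\label{eq:mastersketchweak}
P_{m-2}(\mu_j,i\xi')+\rho\,P_{m-1}(\mu_j,i\xi')+\rho^2\,P_m(\mu_j,i\xi')=0.
\end{equation}
At $\rho=0$ this reduces to $P_{m-2}(\mu,i\xi')=0$, so the $m-2$ small roots admit the labelling $\mu_j(0,\xi')=id_j(\xi')$, while the remaining two roots solve $\lambda^2+c_{m-1,0}\lambda+c_{m-2,0}=0$ at $\xi=0$, which gives~\eqref{eq:lambdam-1m}. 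In case (i) both $\check P_{m-2,j}(id_j,i\xi')$ and $P_{m-1}(id_j,i\xi')$ are nonzero, so dividing~\eqref{eq:mastersketchweak} by $\check P_{m-2,j}(\mu_j,i\xi')$ and reading off the first-order correction reproduces the argument of Lemma~\ref{lem:low} and yields~\eqref{eq:lambdalow2} unchanged.

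For case~\eqref{en:lows} the vanishing $P_{m-1}(id_j,i\xi')=0$ forces the standard first-order perturbation to be zero, so $\mu_j-id_j$ is of order $\rho^2$ rather than $\rho$. Strict hyperbolicity of $P_{m-1}$ guarantees that $d_j$ is a simple root of $P_{m-1}(\cdot,\xi')$, so one may factor $P_{m-1}(\lambda,i\xi')=(\lambda-id_j)\,\tilde P_{m-1}(\lambda,i\xi')$, with $\tilde P_{m-1}$ as in~\eqref{eq:Ptilde} and $\tilde P_{m-1}(id_j,i\xi')\neq0$. Writing $\mu_j=id_j+y$ and using the factorisations $P_{m-2}(\mu_j,i\xi')=y\,\check P_{m-2,j}(\mu_j,i\xi')$ and $P_{m-1}(\mu_j,i\xi')=y\,\tilde P_{m-1}(\mu_j,i\xi')$, equation~\eqref{eq:mastersketchweak} becomes the fixed-point identity
\[
y\,\Bigl(1+\rho\,\frac{\tilde P_{m-1}(id_j+y,i\xi')}{\check P_{m-2,j}(id_j+y,i\xi')}\Bigr)=-\rho^2\,\frac{P_m(id_j+y,i\xi')}{\check P_{m-2,j}(id_j+y,i\xi')}.
\]
One iteration produces $y=i\rho^2\,P_m/\check P_{m-2,j}+\rho^3\,P_m\,\tilde P_{m-1}/\check P_{m-2,j}^2+\textit{o}(\rho^3)$ at $(d_j(\xi'),\xi')$, where the powers of $i$ are stripped using the homogeneity degrees $m$, $m-3$, $m-2$ of $P_m$, $\check P_{m-2,j}$, $\tilde P_{m-1}$ respectively. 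Multiplication by $\rho=\xii$ then gives~\eqref{eq:asymplows}.

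Case~\eqref{en:lowd} is genuinely different, since the double factor of $P_{m-2}$ at $id_j$ forces $\mu_j-id_j$ to be only of order $\rho$. Using $d_j=d_{j+1}=b_{j+1}$, I would factor $P_{m-2}(\mu,i\xi')=(\mu-id_j)^2\,\check P_{m-2,j,j+1}(\mu,i\xi')$ and $P_{m-1}(\mu,i\xi')=(\mu-id_j)\,\check P_{m-1,j+1}(\mu,i\xi')$, substitute $\mu=id_j+\rho w$ in~\eqref{eq:mastersketchweak}, and divide by $\rho^2$; passing to $\rho\to0$ yields
\[
w^2\,\check P_{m-2,j,j+1}(id_j,i\xi')+w\,\check P_{m-1,j+1}(id_j,i\xi')+P_m(id_j,i\xi')=0,
\]
and absorbing the $i$-factors through the homogeneity degrees $m-4$, $m-2$, $m$ converts this into~\eqref{eq:kappalow} with $w=\kappa$. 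The main obstacle is to rigorously extract two branches $\mu_j,\mu_{j+1}$ asymptotic to the two solutions $\kappa_\pm$: since the implicit function theorem fails at the degeneracy, I would justify existence either via a Weierstrass preparation argument for~\eqref{eq:mastersketchweak} viewed as an analytic germ at $(\mu,\rho)=(id_j,0)$, producing a distinguished quadratic factor in $\mu$, or by a Newton polygon analysis; Hypothesis~\ref{hyp:Q2}, which forbids the simultaneous vanishing $P_m(d_j,\xi')=P_{m-1}(d_j,\xi')=P_{m-2}(d_j,\xi')=0$, ensures that the limit quadratic~\eqref{eq:kappalow} is non-trivial, so the two roots $\kappa_\pm$ are well-defined and the asymptotic~\eqref{eq:asymplowd} follows.
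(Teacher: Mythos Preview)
Your proposal is correct and follows essentially the same approach as the paper's proof: the same master identity~\eqref{eq:mastersketchweak}, the same factorisations in cases~\eqref{en:lows} and~\eqref{en:lowd}, and the same homogeneity bookkeeping to strip the powers of~$i$. The only difference is that in case~\eqref{en:lowd} you are more scrupulous than the paper about justifying the extraction of two branches near the degeneracy (invoking Weierstrass preparation or a Newton polygon), whereas the paper simply asserts $\mu_j-id_j=\rho\kappa_\pm+\textit{o}(\rho)$ without further comment.
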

\begin{remark}
We stress that $P_m(d_j(\xi'),\xi')\,\tilde P_{m-1}(d_j(\xi'),\xi')\neq0$ in~\eqref{eq:asymplows}, since $d_j(\xi')$ is a simple root of $P_{m-1}(\lambda,\xi')$ and $P_{m-2}(\lambda,\xi')$, therefore, it cannot be also a root of $P_m(\lambda,\xi')$. Due to the interlacing condition,
\[ \frac{P_m(d_j(\xi'),\xi')\,\tilde P_{m-1}(d_j(\xi'),\xi')}{\big(\check{P}_{m-2,j}(d_j(\xi'),\xi')\big)^2}<0. \]
We stress that $\check{P}_{m-2,j,j+1}(d_j(\xi'),\xi')\neq0$ and $\check{P}_{m-1,j+1}(d_j(\xi'),\xi')\neq0$ in~\eqref{eq:asymplowd}, since $d_j(\xi')$ is a simple root of $P_{m-1}(\lambda,\xi')$ and a double root of $P_{m-2}(\lambda,\xi')$, therefore, it cannot be also a root of $P_m(\lambda,\xi')$. Moreover,
\[ \frac{\check{P}_{m-1,j+1}(d_j(\xi'),\xi')}{2\check{P}_{m-2,j,j+1}(d_j(\xi'),\xi')}<0, \qquad \check{P}_{m-2,j,j+1}(d_j(\xi'),\xi')\,P_m(d_j(\xi'),\xi')>0, \]
as a consequence of the interlacing condition; in particular, it follows that $\Re\kappa_\pm(\xi')<0$. 
\end{remark}
\begin{proof}[Proof of Lemma~\ref{lem:low2weak}]
We follow the proof of Lemma~\ref{lem:low}, but we then take into account of the two new scenarios, \eqref{en:lows} and~\eqref{en:lowd}. In the both cases, $d_j(\xi')$ is a root of $P_{m-1}(\lambda,\xi')$. Since this latter is strictly hyperbolic, $\tilde{P}_{m-1}(d_j(\xi'),\xi')\neq0$. We first consider case~\eqref{en:lows}. 
In this case, by
\begin{equation}\label{eq:mainlow2}
0 = \xii^{-(m-2)}\,Q_2(\lambda_j,i\xi)= P_{m-2}(\mu_j,i\xi') + \rho\,P_{m-1}(\mu_j,i\xi') + \rho^2\,P_m(\mu_j,i\xi'),
\end{equation}
we obtain:
\begin{align*}
0
    & = P_{m-2}(\mu_j,i\xi') + \rho\,P_{m-1}(\mu_j,i\xi') + \rho^2\,P_m(\mu_j,i\xi') \\
    & = (\mu_j-id_j(\xi')) \, \check{P}_{m-2,j}(\mu_j,i\xi') + \rho\,(\mu_j-id_j(\xi'))\,\tilde P_{m-1}(\mu_j,i\xi') + \rho^2\,P_m(\mu_j,i\xi')\\
    & = (\mu_j-id_j(\xi')) \, \big( \check{P}_{m-2,j}(\mu_j,i\xi') + \rho\,\tilde P_{m-1}(\mu_j,i\xi')\big) + \rho^2\,P_m(\mu_j,i\xi').
\end{align*}
For sufficiently small $\rho$, $\check{P}_{m-2,j}(\mu_j,i\xi') + \rho\,\tilde P_{m-1}(\mu_j,i\xi')\neq0$, since $\check{P}_{m-2,j}(d_j(\xi'),\xi')\neq0$, therefore
\[ \mu_j-id_j(\xi') = - \frac{\rho^2\,P_m(\mu_j,i\xi')}{\check{P}_{m-2,j}(\mu_j,i\xi') + \rho\,\tilde P_{m-1}(\mu_j,i\xi')}. \]
The second-order expansion of the last expression gives:
\[ \mu_j - id_j(\xi') = i\,\rho^2\,\frac{P_m(d_j(\xi'),\xi')}{\check{P}_{m-2,j}(d_j(\xi'),\xi')} + \textit{o}(\rho^2), \]
where we used that $P_m$ is homogeneous of degree~$m$ and~$\check{P}_{m-2,j}$ is homogeneous of degree~$m-3$ to remove the imaginary unit, with a multiplication by $-i$. Since we are looking for the real part of $\mu_j$ we need one more expansion step. Writing
\[ -\frac{\rho^2\,P_m(\mu_j,i\xi')}{\check{P}_{m-2,j}(\mu_j,i\xi') + \rho\,\tilde P_{m-1}(\mu_j,i\xi')} = -\frac{\rho^2\,P_m(\mu_j,i\xi')\,\big(\check{P}_{m-2,j}(\mu_j,i\xi') - \rho\,\tilde P_{m-1}(\mu_j,i\xi')\big)}{\big(\check{P}_{m-2,j}(\mu_j,i\xi')\big)^2 - \rho^2\,\big(\tilde P_{m-1}(\mu_j,i\xi')\big)^2} \,,\]
and using the previously obtained information that $\mu_j=id_j(\xi')+\textit{O}(\rho^2)$, we easily obtain the further expansion:
\begin{align*}
\mu_j - id_j(\xi')
    & = i\,\rho^2\,\frac{P_m(d_j(\xi'),\xi')}{\check{P}_{m-2,j}(d_j,\xi')} + \rho^3\,\frac{P_m(d_j(\xi'),\xi')\,\tilde P_{m-1}(d_j(\xi'),\xi')}{\big(\check{P}_{m-2,j}(d_j(\xi'),\xi')\big)^2} + \textit{o}(\rho^3),
\end{align*}
where in the last computation we used that $P_m\,\tilde P_{m-1}$ is homogeneous of degree $2m-2$ and $(\check{P}_{m-2,j})^2$ is homogeneous of degree $2m-6$ to remove the imaginary unity with no sign change. Multiplying by $\xii$, we prove~\eqref{eq:asymplows}.

We now consider case~\eqref{en:lowd}. In this case, by~\eqref{eq:mainlow2} we obtain:
\[ (\mu_j-id_j(\xi'))^2 \, \check{P}_{m-2,j,j+1}(\mu_j,i\xi') + \rho\,(\mu_j-id_j(\xi'))\,\tilde P_{m-1}(\mu_j,i\xi') + \rho^2\,P_m(\mu_j,i\xi').\]
Noticing that $\check{P}_{m-2,j,j+1}$ is homogeneous of degree $m-4$, $\tilde P_{m-1}$ is homogeneous of degree $m-2$ and $P_m$ is homogeneous of degree~$m$, due to $\mu_j=id_j(\xi')+\textit{o}(1)$, we find
\[ \mu_j-id_j(\xi') = \rho\,\kappa_\pm(\xi')+\textit{o}(\rho),\]
for sufficiently small $\rho$, where $\kappa_\pm$ are the two solutions to~\eqref{eq:kappalow}. Therefore,
\[ \mu_{j+1}-id_j(\xi') = \rho\,\kappa_\mp(\xi')+\textit{o}(\rho). \]
Multiplying by $\xii$, we prove~\eqref{eq:asymplowd}.
\end{proof}
If we allow the non strict interlacing of the polynomials $P_{m-1}(\lambda,\xi')$ and $P_{m}(\lambda,\xi')$ for some $\xi'\in S^{n-1}$, then Lemma~\ref{lem:high} is generalized as follows.
\begin{lemma}\label{lem:highweak}
Let~$P_{m-2}(\partial_t,\partial_x)$ and $P_m(\partial_t,\partial_x)$ be (possibly weakly) hyperbolic and $P_{m-1}(\partial_t,\partial_x)$ be strictly hyperbolic. Assume that $P_{m-2}(\lambda,\xi')$ and~$P_{m-1}(\lambda,\xi')$ interlace and that $P_{m-1}(\lambda,\xi')$ and~$P_m(\lambda,\xi')$ interlace, for any $\xi'\in S^{n-1}$, and that there is no $(\lambda,\xi')\in\R\times S^{n-1}$ such that $P_{m-2}(\lambda,\xi')=P_{m-1}(\lambda,\xi')=P_m(\lambda,\xi')$. Let~$a_j(\xi')$ be the roots of $P_{m}(\lambda,\xi')$, $\xi'\in S^{n-1}$. Then we may label the $m$ roots~$\lambda_j(\xi)$ of~$Q_2(\lambda,i\xi)$, $\xi\in\R^n$, in such a way that they are described as follows, where we put $\xi'=\xi/\xii$.
\begin{enumerate}[(i)]
\item If $P_{m-1}(a_j(\xi'),\xi')\neq0$ and $\check{P}_{m,j}(a_j(\xi'),\xi')\neq0$, that is, $a_j(\xi')$ is a simple root of $P_m(\lambda,\xi')$ and it is not a root of $P_{m-1}(\lambda,\xi')$, then $\lambda_j(\xi)$ is as in~\eqref{eq:lambdahighsimple};
\item\label{en:highs} If $P_{m-1}(a_j(\xi'),\xi')=0$ and $\check{P}_{m,j}(a_j(\xi'),\xi')\neq0$, that is, $a_j(\xi')$ is a simple root of both $P_{m-1}(\lambda,\xi')$ and $P_m(\lambda,\xi')=0$, then
\begin{equation}\label{eq:asymphighs}
\lambda_j(\xi) = i\xii a_j(\xi') + i\,\xii^{-1}\,\frac{P_{m-2}(a_j(\xi'),\xi')}{\check{P}_{m,j}(a_j,\xi')} - \xii^{-2}\,\frac{P_{m-2}(a_j(\xi'),\xi')\,\tilde P_{m-1}(a_j(\xi'),\xi')}{\big(\check{P}_{m,j}(a_j(\xi'),\xi')\big)^2} + \textit{o}(\xii^{-2}),
\end{equation}
as $\xii\to\infty$, where
\[ \tilde P_{m-1}(\lambda,\xi')=\begin{cases}
\check{P}_{m-1,j-1}(\lambda,\xi') & \text{if $a_j(\xi')=b_{j-1}(\xi')$,}\\
\check{P}_{m-1,j}(\lambda,\xi') & \text{if $a_j(\xi')=b_{j}(\xi')$.}
\end{cases} \]
\item\label{en:highd} If $a_j(\xi')=b_j(\xi')=a_{j+1}(\xi')$, that is, $a_j(\xi')$ is a simple root of $P_{m-1}(\lambda,\xi')$ and a double root of $P_m(\lambda,\xi')$, then
\begin{equation}\label{eq:asymphighd}
\lambda_{j,j+1}(\xi) = i\xii a_j(\xi') + \kappa_\pm(\xi') + \textit{o}(1),
\end{equation}
as $\xii\to\infty$, where $\kappa_\pm(\xi')$ are the two solutions to
\begin{equation}\label{eq:kappahigh}
\kappa^2\check{P}_{m,j,j+1}(a_j(\xi'),\xi') + \kappa\,\check{P}_{m-1,j}(a_j(\xi'),\xi') + P_{m-2}(a_j(\xi'),\xi').
\end{equation}
%
%
%
\end{enumerate}
\end{lemma}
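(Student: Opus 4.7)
The plan is to mirror the proof of Lemma~\ref{lem:low2weak}, but with the roles of ``small'' and ``large'' frequencies exchanged. Fix $\xi'\in S^{n-1}$, set $\rho=\xii$, $\xi=\rho\xi'$, and introduce $\mu_j(\rho,\xi')=\rho^{-1}\lambda_j(\rho\xi')$ as in Lemma~\ref{lem:high}. Dividing the identity $Q_2(\lambda_j,i\xi)=0$ by $\rho^m$ yields
\[ 0 = P_m(\mu_j,i\xi') + \rho^{-1}\,P_{m-1}(\mu_j,i\xi') + \rho^{-2}\,P_{m-2}(\mu_j,i\xi'), \]
and the analysis of each case is then driven by the multiplicity of $a_j(\xi')$ in $P_m(\lambda,\xi')$ and $P_{m-1}(\lambda,\xi')$. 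In all three cases the base approximation $\mu_j = ia_j(\xi')+\textit{o}(1)$, and the nonvanishing of $\check{P}_{m,j}$ (or of the relevant factored quotients) in a neighborhood of $ia_j(\xi')$, are guaranteed exactly as in the proof of Lemma~\ref{lem:high}. Case (i) then follows verbatim from Lemma~\ref{lem:high}.

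For case (ii), I will use that $a_j(\xi')$ is a simple root of both $P_m(\lambda,\xi')$ and $P_{m-1}(\lambda,\xi')$ to factor
\[ 0 = (\mu_j - ia_j(\xi'))\,\big[\check{P}_{m,j}(\mu_j,i\xi') + \rho^{-1}\,\tilde P_{m-1}(\mu_j,i\xi')\big] + \rho^{-2}\,P_{m-2}(\mu_j,i\xi'), \]
with $\tilde P_{m-1}=\check P_{m-1,j-1}$ or $\check P_{m-1,j}$ according to which root of $P_{m-1}$ coincides with $a_j$. Since $\check{P}_{m,j}(ia_j(\xi'),i\xi')\neq0$, the bracket is invertible for large $\rho$, so
\[ \mu_j - ia_j(\xi') = -\frac{\rho^{-2}\,P_{m-2}(\mu_j,i\xi')\,\big(\check{P}_{m,j}(\mu_j,i\xi')-\rho^{-1}\tilde P_{m-1}(\mu_j,i\xi')\big)}{\big(\check{P}_{m,j}(\mu_j,i\xi')\big)^2-\rho^{-2}\big(\tilde P_{m-1}(\mu_j,i\xi')\big)^2}. \]
A first expansion gives $\mu_j=ia_j+\textit{O}(\rho^{-2})$; substituting this back and using the homogeneity degrees $m-2$, $m-2$, $m-1$ of $P_{m-2}$, $\tilde P_{m-1}$, $\check{P}_{m,j}$ respectively to eliminate the factors of $i$ (the leading term picks up $i^{m-2}/i^{m-1}=-i$, the next one $i^{2(m-2)}/i^{2(m-1)}=-1$), I obtain~\eqref{eq:asymphighs} after multiplying by $\rho$.

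For case (iii), $a_j(\xi')$ is a double root of $P_m(\lambda,\xi')$ (so $P_m = (\lambda-ia_j)^2\,\check P_{m,j,j+1}$) and a simple root of $P_{m-1}(\lambda,\xi')$ (so $P_{m-1}=(\lambda-ia_j)\,\check P_{m-1,j}$), while $P_{m-2}(a_j,\xi')\neq0$ by the no-triple-vanishing hypothesis. Plugging in, I get
\[ 0=(\mu_j-ia_j(\xi'))^2\,\check P_{m,j,j+1}(\mu_j,i\xi') + \rho^{-1}(\mu_j-ia_j(\xi'))\,\check P_{m-1,j}(\mu_j,i\xi') + \rho^{-2}\,P_{m-2}(\mu_j,i\xi'). \]
The natural ansatz $\mu_j-ia_j(\xi')=\rho^{-1}\kappa+\textit{o}(\rho^{-1})$ balances all three terms at order $\rho^{-2}$, and passing to the limit $\rho\to\infty$ (so $\mu_j\to ia_j$) makes $\kappa$ a solution of
\[ \kappa^2\check P_{m,j,j+1}(ia_j,i\xi') + \kappa\,\check P_{m-1,j}(ia_j,i\xi') + P_{m-2}(ia_j,i\xi')=0. \]
Since all three coefficients are homogeneous of degree $m-2$, the common factor $i^{m-2}$ cancels, leaving~\eqref{eq:kappahigh}; multiplying the expansion of $\mu_j$ by $\rho$ gives~\eqref{eq:asymphighd}, with the two roots naturally labeled as $\lambda_j$ and $\lambda_{j+1}$.

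The routine part is the bookkeeping of the powers of $i$ produced by the homogeneity of $P_{m-j}$ evaluated at $(i\lambda,i\xi')$; the main conceptual step, as in Lemma~\ref{lem:low2weak}, is recognizing the correct scaling ansatz in each degenerate case, namely the $\rho^{-1}$ (respectively $\rho^{-2}$) correction in case (ii) and the $\rho^{-1}$ correction in case (iii), which is what makes the factored version of $Q_2$ balance at the right order. Once the scaling is chosen, the argument reduces to the implicit function theorem / continuity of roots, applied exactly as in the proof of Lemma~\ref{lem:low2weak}. No additional tools beyond those already used for Lemmas~\ref{lem:high} and~\ref{lem:low2weak} are required.
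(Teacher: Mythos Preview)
Your proof is correct and follows essentially the same approach as the paper's own proof: both mirror the argument of Lemma~\ref{lem:low2weak} at high frequencies, factoring $(\mu_j-ia_j)$ (once or twice) from $P_m$ and $P_{m-1}$ according to the multiplicity scenario, inverting the bracket $\check P_{m,j}+\rho^{-1}\tilde P_{m-1}$ via the same difference-of-squares trick in case~(ii), and balancing the quadratic at order $\rho^{-2}$ in case~(iii) using that $\check P_{m,j,j+1}$, $\check P_{m-1,j}$, and $P_{m-2}$ are all homogeneous of degree $m-2$. The bookkeeping of the powers of $i$ and the scaling ans\"atze you identify coincide with those in the paper.
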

\begin{remark}
We stress that $P_{m-2}(a_j(\xi'),\xi')\,\tilde P_{m-1}(a_j(\xi'),\xi')\neq0$ in~\eqref{eq:asymphighs}, since $a_j(\xi')$ is a simple root of $P_{m-1}(\lambda,\xi')$ and $P_{m}(\lambda,\xi')$, therefore, it cannot be also a root of $P_{m-2}(\lambda,\xi')$. Due to the interlacing condition,
\[ -\frac{P_{m-2}(a_j(\xi'),\xi')\,\tilde P_{m-1}(a_j(\xi'),\xi')}{\big(\check{P}_{m,j}(a_j(\xi'),\xi')\big)^2}<0. \]
We stress that $\check{P}_{m,j,j+1}(a_j(\xi'),\xi')\neq0$ and $\check{P}_{m-1,j}(a_j(\xi'),\xi')\neq0$ in~\eqref{eq:asymphighd}, since $a_j(\xi')$ is a simple root of $P_{m-1}(\lambda,\xi')$ and a double root of $P_{m}(\lambda,\xi')$, therefore, it cannot be also a root of $P_{m-2}(\lambda,\xi')$. Moreover,
\[ -\frac{\check{P}_{m-1,j}(a_j(\xi'),\xi')}{2\check{P}_{m,j,j+1}(a_j(\xi'),\xi')}<0, \qquad \check{P}_{m,j,j+1}(a_j(\xi'),\xi')\,P_{m-2}(a_j(\xi'),\xi')>0, \]
as a consequence of the interlacing condition; in particular, it follows that $\Re\kappa_\pm(\xi')<0$. 
\end{remark}
\begin{proof}[Proof of Lemma~\ref{lem:highweak}]
The proof is similar to the proof of Lemma~\ref{lem:low2weak}. We follow the proof of Lemma~\ref{lem:high}, but we then take into account of the two new scenarios, \eqref{en:highs} and~\eqref{en:highd}. In the both cases, $a_j(\xi')$ is a root of $P_{m-1}(\lambda,\xi')$. Since this latter is strictly hyperbolic, $\tilde{P}_{m-1}(a_j(\xi'),\xi')\neq0$. We first consider case~\eqref{en:highs}. In this case, by
\begin{equation}\label{eq:mainhigh2}
0 = \xii^{-m}\,Q_2(\lambda_j,i\xi)= P_m(\mu_j,i\xi') + \rho^{-1}\,P_{m-1}(\mu_j,i\xi') + \rho^{-2}\,P_{m-2}(\mu_j,i\xi'),
\end{equation}
we find
\begin{align*}
0
    & = (\mu_j-ia_j(\xi')) \check{P}_{m,j}(\mu_j,i\xi') + \rho^{-1}\,(\mu_j-ia_j(\xi'))\,\tilde{P}_{m-1}(\mu_j,i\xi') + \rho^{-2}\,P_{m-2}(\mu_j,i\xi')\\
    & = (\mu_j-ia_j(\xi'))\big(\check{P}_{m,j}(\mu_j,i\xi') + \rho^{-1}\,\tilde{P}_{m-1}(\mu_j,i\xi')\big)+ \rho^{-2}\,P_{m-2}(\mu_j,i\xi').
\end{align*}
For sufficiently large $\rho$, $\check{P}_{m,j}(\mu_j,i\xi') + \rho^{-1}\,\tilde P_{m-1}(\mu_j,i\xi')\neq0$, since $\check{P}_{m,j}(a_j(\xi'),\xi')\neq0$, therefore
\[ \mu_j-ia_j(\xi') = -\frac{\rho^{-2}\,P_{m-2}(\mu_j,i\xi')}{\check{P}_{m,j}(\mu_j,i\xi') + \rho^{-1}\,\tilde P_{m-1}(\mu_j,i\xi')}. \]
The second-order expansion of the last expression gives:
\[ \mu_j - ia_j(\xi') = i\,\rho^{-2}\,\frac{P_{m-2}(a_j(\xi'),\xi')}{\check{P}_{m,j}(a_j,\xi')} + \textit{o}(\rho^{-2}), \]
where we used that $P_{m-2}$ is homogeneous of degree~$m-2$ and~$\check{P}_{m,j}$ is homogeneous of degree~$m-1$ to remove the imaginary unit, with a multiplication by $-i$. Since we are looking for the real part of $\mu_j$ we need one more expansion step. Writing
\[ -\frac{\rho^{-2}\,P_{m-2}(\mu_j,i\xi')}{\check{P}_{m,j}(\mu_j,i\xi') + \rho^{-1}\,\tilde P_{m-1}(\mu_j,i\xi')} = - \frac{\rho^{-2}\,P_{m-2}(\mu_j,i\xi')\,\big(\check{P}_{m,j}(\mu_j,i\xi') - \rho^{-1}\,\tilde P_{m-1}(\mu_j,i\xi')\big)}{\big(\check{P}_{m,j}(\mu_j,i\xi')\big)^2 - \rho^{-2}\,\big(\tilde P_{m-1}(\mu_j,i\xi')\big)^2} \,,\]
and using the previously obtained information that $\mu_j=ia_j(\xi')+\textit{O}(\rho^{-2})$, we easily obtain the further expansion:
\begin{align*}
\mu_j - ia_j(\xi')
    & = i\,\rho^{-2}\,\frac{P_{m-2}(a_j(\xi'),\xi')}{\check{P}_{m,j}(a_j,\xi')} - \rho^{-3}\,\frac{P_{m-2}(a_j(\xi'),\xi')\,\tilde P_{m-1}(a_j(\xi'),\xi')}{\big(\check{P}_{m,j}(a_j(\xi'),\xi')\big)^2} + \textit{o}(\rho^{-3}),
\end{align*}
where in the last computation we used that $P_{m-2}\,\tilde P_{m-1}$ is homogeneous of degree $2m-4$ and $(\check{P}_{m,j})^2$ is homogeneous of degree $2m-2$ to remove the imaginary unity with a sign change. Multiplying by $\xii$, we prove~\eqref{eq:asymphighs}.

We now consider case~\eqref{en:highd}. In this case, by~\eqref{eq:mainhigh2} we obtain:
\[ (\mu_j-ia_j(\xi'))^2 \, \check{P}_{m,j,j+1}(\mu_j,i\xi') + \rho^{-1}\,(\mu_j-ia_j(\xi'))\,\check{P}_{m-1,j}(\mu_j,i\xi') + \rho^{-2}\,P_{m-2}(\mu_j,i\xi').\]
Noticing that $\check{P}_{m,j,j+1}$, $\check{P}_{m-1,j}$ and $P_{m-2}$ are homogeneous of degree $m-2$, due to $\mu_j=ia_j(\xi')+\textit{o}(1)$, we find
\[ \mu_j-ia_j(\xi') = \rho^{-1}\,\kappa_\pm(\xi')+\textit{o}(\rho^{-1}),\]
for sufficiently large $\rho$, where $\kappa_\pm$ are the two solutions to~\eqref{eq:kappahigh}. Consequently,
\[ \mu_{j+1}-ia_j(\xi') = \rho^{-1}\,\kappa_\mp(\xi')+\textit{o}(\rho^{-1}). \]
Multiplying by $\xii$, we prove~\eqref{eq:asymphighd}.
\end{proof}
To discuss the influence on the decay estimates for the solution to~\eqref{eq:CPlin}, coming from the four scenarios in Lemmas~\ref{lem:low2weak} and~\ref{lem:highweak}, it is convenient to localize $u$ at low and high frequencies to treat separately the scenarios at low and high frequencies. Without loss of generality, this localization may be more easily expressed, localizing the initial data.
\begin{theorem}\label{thm:lowloss}
Assume that Hypothesis~\ref{hyp:Q2} holds, and that
\[ u_j\in L^q \cap L^2 \qquad \text{for some $q\in[1,2]$, with $\hat u_j$ compactly supported, for $j=0,\ldots,m-1$.} \]
Then the solution $u$ to Cauchy problem~\eqref{eq:CPlin} with $Q=Q_2$ satisfies the following estimates:
\begin{equation}
\label{eq:estQ2weakgen}\begin{split}
\|\partial_t^ku(t,\cdot)\|_{\dot H^s}
    & \leq C\,\sum_{j=m-3}^{m-1}(1+t)^{-\eta}\,\big(\|u_j\|_{L^q}+\|u_j\|_{L^2}\big)\\
    & \qquad +C\,\sum_{j=0}^{m-4}(1+t)^{-\frac{n}4\left(\frac1q-\frac12\right)-\frac{k+s-j}4}\,\big(\|u_j\|_{L^q}+\|u_j\|_{L^2}\big),
\end{split}\end{equation}
where~$\eta$ is as in~\eqref{eq:estQ2worst}, 
provided that $k+s\geq m-3$ if $q=2$, or $n(1/q-1/2)+k+s>m-3$ otherwise. The decay in~\eqref{eq:estQ2worst} may be improved in two cases.
\begin{enumerate}[(i)]
\item\label{en:strictlowloss} If $P_{m-2}(\partial_t,\partial_x)$ is strictly hyperbolic, then~\eqref{eq:estQ2weakgen} may be improved to
\begin{equation}
\label{eq:estQ2strict}\begin{split}
\|\partial_t^ku(t,\cdot)\|_{\dot H^s}
     & \leq C\,\sum_{j=m-3}^{m-1}(1+t)^{-\frac{n}4\left(\frac1q-\frac12\right)+\frac{k+s-(m-3)}4}\,\big(\|u_j\|_{L^q}+\|u_j\|_{L^2}\big) \\
     & \qquad + C\,\sum_{j=0}^{m-4}(1+t)^{-\frac{n}4\left(\frac1q-\frac12\right)+\frac{k+s-j}4}\,\big(\|u_j\|_{L^q}+\|u_j\|_{L^2}\big).
\end{split}\end{equation}
\item\label{en:weaklowloss} Assume that for any $\xi'\in S^{n-1}$, the interlacing of $P_{m-2}(\lambda,\xi')$ and $P_{m-1}(\lambda,\xi')$ is strict, exception given for the double roots of $P_{m-2}(\lambda,\xi')$, that is, $b_1(\xi')<d_1(\xi')$, $d_{m-2}(\xi')<b_{m-1}(\xi')$, and
\[ d_j(\xi')<d_{j+1}(\xi') \Rightarrow d_j(\xi')<b_{j+1}(\xi')<d_{j+1}(\xi'). \]
Then~\eqref{eq:estQ2weakgen} may be improved to
\begin{equation}
\label{eq:estQ2strong}\begin{split}
\|\partial_t^ku(t,\cdot)\|_{\dot H^s}
     & \leq C\,\sum_{j=m-3}^{m-1}(1+t)^{-\frac{n}2\left(\frac1q-\frac12\right)-\frac{k+s-(m-2)}2}\,\big(\|u_j\|_{L^q}+\|u_j\|_{L^2}\big) \\
     & \qquad + C\,\sum_{j=0}^{m-4}(1+t)^{-\frac{n}2\left(\frac1q-\frac12\right)-\frac{k+s-j-1}2}\,\big(\|u_j\|_{L^q}+\|u_j\|_{L^2}\big).
\end{split}
\end{equation}
\end{enumerate}
\end{theorem}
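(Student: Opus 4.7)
The plan is to repeat the frequency-splitting argument used in the proof of Theorem~\ref{thm:CPQ2}, now adapted to the three asymptotic regimes for the small roots produced by Lemma~\ref{lem:low2weak}. Since each $\hat u_j$ has compact Fourier support in some ball $\{|\xi|\leq M\}$, I first fix $\delta\in(0,M)$ such that the roots $\lambda_j(\xi)$ admit the expansions of Lemma~\ref{lem:low2weak} for $|\xi|\leq\delta$, and observe that on the annulus $\{\delta\leq|\xi|\leq M\}$ the strict stability guaranteed by Hypothesis~\ref{hyp:Q2}, together with Remark~\ref{rem:multiple} to handle possible coincidences of roots inside the annulus, produces an exponential factor $(1+t)^{m-1}e^{-ct}$ which is absorbed into any polynomial decay rate appearing in the statement. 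This reduces the analysis to estimating the contribution of $|\xi|\leq\delta$ to each $\dot H^s$-norm.

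At low frequencies I will use representation~\eqref{eq:urepgen}. The two large roots $\lambda_{m-1}(\xi)$, $\lambda_m(\xi)$ stay in $\{\Re\lambda\leq -c\}$ and contribute exponential decay, with the borderline case $4c_{m-2,0}=c_{m-1,0}^2$ treated via the $(1+t)e^{-ct}$ bound of Remark~\ref{rem:multiple}. For the small roots $\lambda_1,\ldots,\lambda_{m-2}$, Lemma~\ref{lem:low2weak} gives three pointwise regimes: in case~(i) one has $\Re\lambda_j(\xi)\leq -c|\xi|^2$ with gap $|\lambda_j-\lambda_k|\gtrsim|\xi|$ whenever $d_j(\xi')\neq d_k(\xi')$; in case~\eqref{en:lows}, when $d_j(\xi')$ is shared with $P_{m-1}$, the expansion~\eqref{eq:asymplows} forces the much slower bound $\Re\lambda_j(\xi)\leq -c|\xi|^4$; in case~\eqref{en:lowd} the two roots $\lambda_j,\lambda_{j+1}$ branching from a double $d$-root still satisfy $\Re\lambda_{j,j+1}(\xi)\leq -c|\xi|^2$, but they separate only like $|\lambda_j-\lambda_{j+1}|\gtrsim|\xi|^2$, since $\kappa_+\neq\kappa_-$ in~\eqref{eq:kappalow}.

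Inserting these bounds into~\eqref{eq:urepgenest} and applying Plancherel, Hausdorff--Young and H\"older with exponent $r$ satisfying $1/r=1/q-1/2$, the change of variable $\eta=t^{1/2}\xi$ converts an $|\xi|^2 t$-exponent into a $t^{-n/(2r)-(k+s-j)/2}$ rate, whereas $\eta=t^{1/4}\xi$ converts $|\xi|^4 t$ into $t^{-n/(4r)-(k+s-j)/4}$. For the coefficient of $\hat u_j$ with $j\in\{m-3,m-2,m-1\}$, the dominant $|\xi|$-power in the denominator of~\eqref{eq:urepgen} is $|\xi|^{m-3}$ when the surviving root is of type~(i) or~\eqref{en:lows} and $|\xi|^{m-2}$ when of type~\eqref{en:lowd}, owing to the extra collision factor; combining the two possibilities with the two different Gaussian scales produces exactly the $\min$ defining $\eta$ in~\eqref{eq:estQ2worst}. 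The initial data $u_0,\ldots,u_{m-4}$ always carry an extra $|\xi|^{k-j}$ factor that outweighs the collision loss, so their contribution is governed by the $|\xi|^4$-rate with exponent $k-j$, giving the second line of~\eqref{eq:estQ2weakgen}.

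The two improvements then follow from a dichotomy in Lemma~\ref{lem:low2weak}. If $P_{m-2}$ is strictly hyperbolic, no $d_j(\xi')$ is a double root, so case~\eqref{en:lowd} of Lemma~\ref{lem:low2weak} is excluded; only rates of the $|\xi|^4$-type (or faster) remain and the worst of them yields~\eqref{eq:estQ2strict}. Conversely, the assumption in~\eqref{en:weaklowloss} excludes case~\eqref{en:lows} of Lemma~\ref{lem:low2weak}, leaving only the $|\xi|^2$-rate with the $|\xi|^{-1}$ collision loss, which is precisely the $Q_1$-type decay~\eqref{eq:estQ2strong}. The main technical obstacle is that the classification of a small root as type~(i), \eqref{en:lows} or~\eqref{en:lowd} depends on the direction $\xi'\in S^{n-1}$: one has to cover $S^{n-1}$ by finitely many neighborhoods on each of which the root structure is stable, and then choose $\delta$ and all implicit constants uniformly over the cover. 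Compactness of $S^{n-1}$ and the algebraic nature of the interlacing conditions make this feasible, but a careful local analysis is needed near those $\xi'$ where the various root coincidences occur.
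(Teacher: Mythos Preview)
Your proposal is essentially correct and follows the same frequency-splitting strategy as the paper. The one substantive difference is in how you handle case~\eqref{en:lowd} of Lemma~\ref{lem:low2weak}: you bound $|\lambda_j-\lambda_{j+1}|\gtrsim|\xi|^2$ directly, invoking $\kappa_+\neq\kappa_-$, and then carry an extra $|\xi|^{-1}$ in the denominator of~\eqref{eq:urepgen}. The paper instead rewrites the two nearly-coalescing terms via Remark~\ref{rem:multiple}, which replaces the singular factor $(\lambda_j-\lambda_{j+1})^{-1}$ by the bounded divided difference $(e^{\lambda_jt}-e^{\lambda_{j+1}t})/(\lambda_j-\lambda_{j+1})$ and produces the pointwise factor $(1+t|\xi|)\,e^{-c|\xi|^2t}$ with the \emph{same} $|\xi|^{-(m-3)}$ power as in the strict case. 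Both routes yield the rate $t^{-\frac{n}{2}(\frac1q-\frac12)-\frac{k+s-(m-2)}{2}}$, so the final decay exponents agree.

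The advantage of the paper's route is robustness: the discriminant of~\eqref{eq:kappalow} equals $(\check P_{m-1,j+1}(d_j,\xi'))^2-4\check P_{m-2,j,j+1}(d_j,\xi')P_m(d_j,\xi')$, and nothing in Hypothesis~\ref{hyp:Q2} or in the remark following Lemma~\ref{lem:low2weak} prevents it from vanishing at some $\xi'\in S^{n-1}$ (indeed Theorem~\ref{thm:asympweak} imposes $\kappa_+\neq\kappa_-$ as a separate hypothesis). Near such a direction your lower bound $|\lambda_j-\lambda_{j+1}|\geq c|\xi|^2$ degenerates and the direct estimate does not close with a uniform constant over $S^{n-1}$. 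The Remark~\ref{rem:multiple} rewriting sidesteps this entirely, since the divided difference is uniformly bounded by $t\,e^{\max(\Re\lambda_j,\Re\lambda_{j+1})t}$ regardless of how close the two roots are. If you want to keep your direct approach, you should either verify that the discriminant of~\eqref{eq:kappalow} is bounded away from zero on $S^{n-1}$ in the situation at hand, or fall back on the rewritten representation on the set of directions where it is small; otherwise the compactness argument you sketch at the end will not produce a uniform $\delta$ and $c$.
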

\begin{proof}
We follow the proof of Theorem~\ref{thm:CPQ2}, but we take into account of the differences in the behavior of $\lambda_j(\xi)$ as $\xi\to0$. Let $\xii\leq\delta$ for sufficiently small $\delta>0$. For $j=m-1,m$, we may proceed as in the proof of Theorem~\ref{thm:CPQ2} and derive
\[ \frac{\big|\hat u_{m-1}-\hat u_{m-2}\sum_{k\neq j}\lambda_k + \ldots + (-1)^{m-1} \hat u_0 \prod_{k\neq j}\lambda_k\big|}{\prod_{k\neq j}|\lambda_k-\lambda_j|}\,e^{\lambda_jt} \lesssim e^{-ct}\,\sum_{k=0}^{m-1} |\hat u_k|, \]
if $4c_{m-2,0}\neq c_{m-1,0}^2$, and similarly if the equality holds.

For $j=1,\ldots,m-2$, for any~$\xi'\in\mathcal S^{n-1}$, we consider the three different scenarios in Lemma~\ref{lem:low2weak}.

If $P_{m-1}(d_j(\xi'),\xi')\neq0$, 
then $\lambda_j(\xi)$ is as in~\eqref{eq:lambdalow2}, and we get the pointwise estimate
\begin{align*}
& \frac{\big|\hat u_{m-1}-\hat u_{m-2}\sum_{k\neq j}\lambda_k + \ldots + (-1)^{m-1} \hat u_0 \prod_{k\neq j}\lambda_k\big|}{\prod_{k\neq j}|\lambda_k-\lambda_j|}\,e^{\lambda_jt} \\
    & \qquad \leq e^{-ct\xii^2}\, \left( \xii^{-(m-3)}\big(|\hat u_{m-1}|+|\hat u_{m-2}|+|\hat u_{m-3}|\big) + \sum_{k=0}^{m-4}\xii^{-j}\,|\hat u_k|\right),
\end{align*}
as in the proof of Theorem~\ref{thm:CPQ2}.

If $P_{m-1}(d_j(\xi'),\xi')=0$ and $\check{P}_{m-2,j}(d_j(\xi'),\xi')\neq0$, that is, $d_j(\xi')$ is a simple root of both $P_{m-1}(\lambda,\xi')$ and $P_{m-2}(\lambda,\xi')$, then $\lambda_j(\xi)$ is as in~\eqref{eq:asymplows}, and we get the pointwise estimate
\begin{align*}
& \frac{\big|\hat u_{m-1}-\hat u_{m-2}\sum_{k\neq j}\lambda_k + \ldots + (-1)^{m-1} \hat u_0 \prod_{k\neq j}\lambda_k\big|}{\prod_{k\neq j}|\lambda_k-\lambda_j|}\,e^{\lambda_jt} \\
    & \qquad \leq e^{-ct\xii^4}\, \left( \xii^{-(m-3)}\big(|\hat u_{m-1}|+|\hat u_{m-2}|+|\hat u_{m-3}|\big) + \sum_{k=0}^{m-4}\xii^{-j}\,|\hat u_k|\right),
\end{align*}
where the only difference with the case treated in Theorem~\ref{thm:CPQ2} is that the exponential term $e^{-ct\xii^2}$ is replaced by $e^{-ct\xii^4}$. Indeed, it remains valid that
\begin{align*}
& |\lambda_k|\lesssim\xii, \qquad k=1,\ldots,m-2,\\
& |\lambda_k-\lambda_j|=|d_k(\xi')-d_j(\xi')|\,\xii\gtrsim \xii, \quad k=1,\ldots,m-2, \ k\neq j,
\end{align*}
since $d_j(\xi')$ is a simple root of $P_{m-2}(\lambda,\xi')$. In particular, if $P_{m-2}(\lambda,\xi')$ is strictly hyperbolic for any $\xi'\in S^{n-1}$, following as in the proof of Theorem~\ref{thm:CPQ2}, this just leads to replace the decay rate in~\eqref{eq:estQ2} with the decay rate in~\eqref{eq:estQ2strict}.

If $d_j(\xi')$ is a simple root of $P_{m-1}(\lambda,\xi')$ and a double root of $P_{m-2}(\lambda,\xi')$, then $\lambda_j(\xi)$ is as in~\eqref{eq:asymplowd}, in particular, $\Re\lambda_j(\xi)\leq-c\xii^2$, so that the power at the denominator in the decay rate remains as in~\eqref{eq:estQ2}. However, in this case, $\lambda_j(\xi)-\lambda_{j+1}(\xi)$ vanishes at a faster speed as~$\xi\to0$. Rewriting the representation of $\hat u(t,\xi)$ as in Remark~\ref{rem:multiple}, we get the pointwise estimate
\begin{align*}
& \frac{\big|\hat u_{m-1}-\hat u_{m-2}\sum_{k\neq j}\lambda_k + \ldots + (-1)^{m-1} \hat u_0 \prod_{k\neq j}\lambda_k\big|}{\prod_{k\neq j}|\lambda_k-\lambda_j|}\,e^{\lambda_jt} \\
    & \qquad \leq (1+t\xii)\,e^{-ct\xii^2}\, \left( \xii^{-(m-3)}\big(|\hat u_{m-1}|+|\hat u_{m-2}|+|\hat u_{m-3}|\big) + \sum_{k=0}^{m-4}\xii^{-j}\,|\hat u_k|\right),
\end{align*}
where the only difference with the case treated in Theorem~\ref{thm:CPQ2} is the term $(1+t\xii)$; this term produces a loss of decay rate $\sqrt{t}$ for large $t$. In turn, if the interlacing of $P_{m-2}(\lambda,\xi')$ and $P_{m-1}(\lambda,\xi')$ is strict, exception given for the double roots of $P_{m-2}(\lambda,\xi')$, for any $\xi'\in S^{n-1}$, following as in the proof of Theorem~\ref{thm:CPQ2}, this just leads to replace the decay rate in~\eqref{eq:estQ2} with the decay rate in~\eqref{eq:estQ2strong}, where the loss of decay $(1+t)^{\frac12}$ appears.

In the general case, if $P_{m-2}(\lambda,\xi')$ is not strictly hyperbolic, and the interlacing of $P_{m-2}(\lambda,\xi')$ and $P_{m-1}(\lambda,\xi')$ is weak, for some $\xi'\in S^{n-1}$, either of the previous scenario may happen for different roots $\lambda_j$ and/or at different points $\xi'\in S^{n-1}$, so that we shall consider the worst case scenario, that is, we shall replace the decay rate in~\eqref{eq:estQ2} with the one in~\eqref{eq:estQ2worst}.
\end{proof}
At high frequencies, we shall distinguish two cases. Indeed, if $P_m(\partial_t,\partial_x)$ is hyperbolic, but not strictly, then~\eqref{eq:CPlin} is not well-posed in $H^s$, in general, but only in $H^\infty$ (or $\mathcal C^\infty$), since a loss of $1$ derivative may occur. For this reason, we first discuss the case in which $P_m(\partial_t,\partial_x)$ is strictly hyperbolic, but the interlacing of $P_{m-1}(\lambda,\xi')$ and $P_m(\lambda,\xi')$ may be not strict for some $\xi'\in S^{n-1}$.
\begin{proposition}\label{prop:lossdecay}
Assume that Hypothesis~\ref{hyp:Q2} holds, and that $P_m(\partial_t,\partial_x)$ is strictly hyperbolic. Assume that
\[ u_j\in H^{s_0-j} \qquad \text{with $\hat u_j=0$ in a neighborhood of the origin, $j=0,\ldots,m-1$,} \]
for some $s_0\geq m-1$. Then the solution $u$ to Cauchy problem~\eqref{eq:CPlin} with $Q=Q_2$ satisfies the following regularity-loss type decay estimate:
\begin{equation}
\label{eq:estQ2loss}
\|\partial_t^ku(t,\cdot)\|_{\dot H^s} \leq C\,(1+t)^{-\frac\nu2}\,\|u_j\|_{H^{k+s+\nu-j}},
\end{equation}
for any~$\nu\geq0$ such that $k+s+\nu\leq s_0$. 
If, moreover, $P_{m-1}(\lambda,\xi')$ and $P_m(\lambda,\xi')$ strictly interlace for any $\xi'\in S^{n-1}$, then~\eqref{eq:estQ2loss} is improved to
\begin{equation}
\label{eq:estQ2noloss}
\|\partial_t^ku(t,\cdot)\|_{\dot H^s} \leq C\,e^{-ct}\,\|u_j\|_{H^{k+s-j}},
\end{equation}
for any $k+s\leq s_0$. 
\end{proposition}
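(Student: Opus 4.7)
The plan is to adapt the high-frequency half of the proof of Theorem~\ref{thm:CPQ2}, replacing Lemma~\ref{lem:high} by Lemma~\ref{lem:highweak}. Since the Fourier transforms of the initial data vanish in a neighborhood of the origin, I only need to estimate $\|\partial_t^k u(t,\cdot)\|_{\dot H^s}$ on a region $|\xi|\geq\delta>0$. This region will be split into intermediate frequencies $\delta\leq|\xi|\leq M$, with $M$ chosen large enough that Lemma~\ref{lem:highweak} applies, and high frequencies $|\xi|>M$.

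At intermediate frequencies, Hypothesis~\ref{hyp:Q2} together with Lemma~\ref{lem:stable2}, continuity of the roots and compactness, yields a uniform bound $\Re\lambda_j(\xi)\leq -c$ on that set. Any polynomial-in-$t$ factor coming from possibly coinciding roots is absorbed into a slightly smaller exponential, as already handled in Remark~\ref{rem:multiple}. Plugging this into representation~\eqref{eq:urepgen} gives an exponentially decaying contribution controlled by $\sum_j\|u_j\|_{H^{k+s-j}}$, which satisfies both~\eqref{eq:estQ2loss} (with any $\nu\geq0$) and~\eqref{eq:estQ2noloss} on this range.

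On the high-frequency range, I apply Lemma~\ref{lem:highweak}. Since $P_m(\partial_t,\partial_x)$ is strictly hyperbolic, scenario~(iii) of that lemma is excluded and only scenarios~(i) and~(ii) remain. In both surviving scenarios, $|\lambda_j|\lesssim|\xi|$ and $|\lambda_j-\lambda_k|\gtrsim|\xi|$ for $k\neq j$, since $a_j(\xi')\neq a_k(\xi')$ by strict hyperbolicity of $P_m$, so~\eqref{eq:urepgen} leads to a pointwise bound
\[ |\xi|^s\,|\partial_t^k\hat u(t,\xi)|\lesssim \sum_{j=0}^{m-1}|\xi|^{s+k-j}\,|\hat u_j(\xi)|\,e^{-c\,t\,\omega(\xi)}, \]
with $\omega(\xi)=1$ in scenario~(i) and $\omega(\xi)=|\xi|^{-2}$ in scenario~(ii). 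The key device is then the elementary inequality $y^{\nu/2}e^{-cy/2}\leq C_\nu$ for $y>0$: applied with $y=t|\xi|^{-2}$, it gives $|\xi|^{-\nu}e^{-ct|\xi|^{-2}/2}\leq C_\nu\,t^{-\nu/2}$. Writing $e^{-ct|\xi|^{-2}}$ as the product of two equal factors and using this inequality on one of them will trade $\nu$ derivatives of regularity for a decay factor $t^{-\nu/2}$, which after integration in $\xi$ costs $H^{k+s+\nu-j}$-regularity of the data, yielding~\eqref{eq:estQ2loss}. When $P_{m-1}$ and $P_m$ strictly interlace, $a_j(\xi')\neq b_k(\xi')$ for all $j,k$ and $\xi'\in S^{n-1}$, so scenario~(ii) is also ruled out, only scenario~(i) survives, and one may take $\omega(\xi)\equiv 1$; this gives~\eqref{eq:estQ2noloss} with no loss of regularity.

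The main technical point will be to verify that the bounds on $|\lambda_j|$, $|\lambda_j-\lambda_k|$ and the elementary symmetric functions of $\{\lambda_k\}_{k\neq j}$ used in the proof of Theorem~\ref{thm:CPQ2} under strict interlacing remain valid in every scenario of Lemma~\ref{lem:highweak} that survives the strict hyperbolicity of $P_m$; the regularity-for-decay trade itself is completely standard. Once that verification is carried out, the combination of the intermediate- and high-frequency estimates completes the proof.
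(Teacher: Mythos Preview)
Your proposal is correct and follows essentially the same route as the paper's proof: both adapt the high-frequency part of the proof of Theorem~\ref{thm:CPQ2}, replacing Lemma~\ref{lem:high} by Lemma~\ref{lem:highweak}, observe that strict hyperbolicity of $P_m$ rules out scenario~(iii), and in scenario~(ii) use the asymptotic $\Re\lambda_j(\xi)\leq -c\xii^{-2}$ together with the elementary bound $\sup_{\xii\geq M}\xii^{-\nu}e^{-ct\xii^{-2}}\lesssim t^{-\nu/2}$ to trade $\nu$ derivatives for decay. The only cosmetic difference is that you split $e^{-ct\xii^{-2}}$ into two factors before taking the sup, whereas the paper applies the sup directly; this extra splitting is harmless but unnecessary.
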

The type of decay rate in~\eqref{eq:estQ2loss} is a \emph{regularity-loss decay,} in the sense that additional initial data regularity produces extra decay rate for the solution~\cite{HK06}. This phenomenon is known for some plate models under rotational inertia effects~\cite{CDI13p, SK10} and it also holds for wave equations with very strong damping~\cite{GGH} $\partial_t^2-\Delta u+(-\Delta)^ku_t=0$, $k>1$. Proposition~\ref{prop:lossdecay} shows that, as a consequence of Lemma~\ref{lem:highweak}, this phenomenon may also hold for pure hyperbolic equations.
\begin{proof}
We follow the proof of Theorem~\ref{thm:CPQ2} at high frequencies, but now, if $a_j(\xi')$ is a simple root of both $P_{m-1}(\lambda,\xi')$ and $P_m(\lambda,\xi')$ for some $\xi'\in S^{n-1}$ and $j=1,\ldots,m$, then, due to~\eqref{eq:asymphighs}, recalling~\eqref{eq:urepgen}, we may only estimate
\[ |\partial_t^k\hat u(t,\xi)| \lesssim \sum_{j=0}^{m-1} \xii^{k-j} |\hat u_j(\xi)|\,e^{-\frac{ct}{\xii^2}}, \]
for~$k\geq0$. We immediately obtain that, for any~$\nu\geq0$ such that $k+s+\nu\leq s_0$,
\[
\Big(\int_{\xii\geq M} \xii^{2s} |\partial_t^k\hat u(t,\xi)|^2\,d\xi\Big)^{\frac12} \lesssim \big(\sup_{\xii\geq M} \xii^{-\nu}\,e^{-\frac{ct}{\xii^2}}\big)\,\sum_{j=0}^{m-1} \|u_j\|_{H^{s+k+\nu-j}},
\]
and~\eqref{eq:estQ2loss} follows.
\end{proof}
When $P_m(\partial_t,\partial_x)$ is not strictly hyperbolic, Cauchy problem~\eqref{eq:CPlin} is well-posed in $H^\infty$.
\begin{proposition}\label{prop:regloss}
Assume that Hypothesis~\ref{hyp:Q2} holds, and that
\[ u_j\in H^\infty \qquad \text{with $\hat u_j=0$ in a neighborhood of the origin, $j=0,\ldots,m-1$,} \]
Then the solution $u$ to Cauchy problem~\eqref{eq:CPlin} with $Q=Q_2$ satisfies decay estimate~\eqref{eq:estQ2loss} for any~$\nu\geq1$. If, moreover, the interlacing of $P_{m-1}(\lambda,\xi')$ and $P_{m}(\lambda,\xi')$ is strict, exception given for the double roots of $P_{m}(\lambda,\xi')$ for any $\xi'\in S^{n-1}$, that is,
\[ a_j(\xi')<a_{j+1}(\xi') \Rightarrow a_j(\xi')<b_{j}(\xi')<a_{j+1}(\xi'), \]
then the solution $u$ to Cauchy problem~\eqref{eq:CPlin} with $Q=Q_2$ satisfies the decay estimate
\begin{equation}
\label{eq:estQ2regloss}
\|\partial_t^ku(t,\cdot)\|_{\dot H^s} \leq C\,e^{-ct}\,\|u_j\|_{H^{k+s+1-j}}.
\end{equation}
%
\end{proposition}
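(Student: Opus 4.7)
The plan is to adapt the high-frequency analysis of Theorem~\ref{thm:CPQ2} and Proposition~\ref{prop:lossdecay}, this time using the full classification of root asymptotics provided by Lemma~\ref{lem:highweak}. Since the initial data satisfy $\hat u_j=0$ on a neighbourhood of the origin, we may work on the region $|\xi|\geq \delta_0$ for some $\delta_0>0$. As in the proof of Theorem~\ref{thm:CPQ2}, the intermediate-frequency region $\delta_0\leq|\xi|\leq M$ contributes only exponentially decaying terms $e^{-ct}\sum_j\|u_j\|_{L^2}$ (with at most a finite algebraic loss $(1+t)^{m-1}$ coming from locally coinciding roots, absorbed into $e^{-c't}$), so the whole matter reduces to the behaviour as $|\xi|\to\infty$.

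For the high-frequency region, I would cover $S^{n-1}$ with finitely many open sets on which one of the three scenarios of Lemma~\ref{lem:highweak} holds. On each such set, use the representation~\eqref{eq:urepgen}, or its limit form as in Remark~\ref{rem:multiple} whenever two roots merge. In scenario (i), $\lambda_j(\xi)=i|\xi|a_j(\xi')+\mathit{O}(1)$ with $\Re\lambda_j\leq -c$, all differences $|\lambda_j-\lambda_k|\gtrsim |\xi|$, and exactly the computation of Theorem~\ref{thm:CPQ2} yields $|\partial_t^k\hat u|\lesssim\sum_j|\xi|^{k-j}|\hat u_j|\,e^{-ct}$. In scenario (ii), by~\eqref{eq:asymphighs} one has $\Re\lambda_j\leq -c/|\xi|^2$, and proceeding as in the proof of Proposition~\ref{prop:lossdecay} one gets the regularity-loss type contribution $\sup_{|\xi|\geq M}|\xi|^{-\nu}e^{-ct/|\xi|^2}\lesssim(1+t)^{-\nu/2}$. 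In scenario (iii), by~\eqref{eq:asymphighd} the two merging roots $\lambda_j,\lambda_{j+1}$ satisfy $\Re\lambda_{j,j+1}\to\Re\kappa_\pm<0$, so one still has $\Re\lambda_{j,j+1}\leq -c$, but now $|\lambda_j-\lambda_{j+1}|=|\kappa_+-\kappa_-|+\mathit{o}(1)\gtrsim 1$ rather than the desired $|\xi|$: the denominator $\prod_{k\neq j}|\lambda_k-\lambda_j|$ is of order $|\xi|^{m-2}$ instead of $|\xi|^{m-1}$. Inserting this into~\eqref{eq:urepgen} (or its modified version for nearly multiple roots) produces an extra $|\xi|$ in the pointwise bound, i.e. $|\partial_t^k\hat u|\lesssim\sum_j|\xi|^{k-j+1}|\hat u_j|\,e^{-ct}$, equivalent to a loss of one derivative but with preserved exponential decay.

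Combining the three scenarios with a partition of unity in $\xi'$, under the general Hypothesis~\ref{hyp:Q2}, the only obstruction to pure exponential decay is scenario (ii); the exponent $\nu\geq1$ appears because scenario (iii), which cannot be ruled out under the bare hypothesis, already costs one derivative. This proves~\eqref{eq:estQ2loss} for any $\nu\geq1$. Under the additional interlacing assumption $a_j(\xi')<a_{j+1}(\xi')\Rightarrow a_j(\xi')<b_j(\xi')<a_{j+1}(\xi')$, scenario (ii) cannot occur at any $\xi'\in S^{n-1}$: a simple root $a_j$ of $P_m$ would be strictly separated from $b_{j-1},b_j$, hence could not coincide with a root of $P_{m-1}$. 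Only scenarios (i) and (iii) survive, both giving $e^{-ct}$ decay with at most the one-derivative loss, which is exactly~\eqref{eq:estQ2regloss}.

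The main technical obstacle is to make the localisation in $\xi'$ genuinely uniform across the transitions between scenarios. Near a point $\xi'_0$ where two simple roots $a_j,a_{j+1}$ of $P_m(\lambda,\xi')$ merge into a double root, the expansions~\eqref{eq:lambdahighsimple} and~\eqref{eq:asymphighd} must match smoothly: for $\xi'$ close to $\xi'_0$, the difference $|\lambda_j-\lambda_{j+1}|$ interpolates between $\sim|\xi||a_j(\xi')-a_{j+1}(\xi')|$ (for $|a_j-a_{j+1}|\gg|\xi|^{-1}$) and $\sim 1$ (for $|a_j-a_{j+1}|\ll|\xi|^{-1}$), and the worst-case bound $|\lambda_j-\lambda_{j+1}|\gtrsim\min\{1,|\xi||a_j-a_{j+1}|\}$ must be used uniformly. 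This is handled by choosing the covering of $S^{n-1}$ so that on each patch either $|a_j-a_{j+1}|$ is bounded below or scenario (iii) applies, and invoking Remark~\ref{rem:multiple} on the latter patch; the exponential factor $e^{-ct}$ uniformly dominates any logarithmic or polynomial penalty from the matching.
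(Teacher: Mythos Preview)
Your proposal is correct and follows essentially the same approach as the paper: both arguments localize to high frequencies, invoke the three scenarios of Lemma~\ref{lem:highweak}, use Remark~\ref{rem:multiple} to handle the merging roots in scenario~\eqref{en:highd}, and observe that under the extra interlacing assumption scenario~\eqref{en:highs} is excluded. The only cosmetic difference is that in scenario~\eqref{en:highd} the paper packages the one-derivative loss as a factor $(1+t\xii)e^{-ct}$ (coming directly from the $t\,e^{\lambda_j t}$ term in Remark~\ref{rem:multiple}), whereas you extract it from the bound $\prod_{k\neq j}|\lambda_k-\lambda_j|\gtrsim\xii^{m-2}$; your added discussion of uniformity across scenario transitions is more explicit than the paper's but not essential to the argument.
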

The loss of $1$ derivative in estimate~\eqref{eq:estQ2regloss}, and the restriction~$\nu\geq1$ in~\eqref{eq:estQ2loss} are consistent to the assumption of initial data in $H^\infty$, in order to get the well-posedness (in $H^\infty$) of Cauchy problem~\eqref{eq:CPlin}.
\begin{proof}
We follow the proof of Theorem~\ref{thm:CPQ2} at high frequencies, but now, $a_j(\xi')$ may be a double root of $P_m(\lambda,\xi')$ for some $\xi'\in S^{n-1}$. If $a_j(\xi')=a_{j+1}(\xi')$, the asymptotic behavior of $\lambda_j$ and $\lambda_{j+1}$ is described by~\eqref{eq:asymphighd}, and we may rely on Remark~\ref{rem:multiple} to estimate
\begin{align*}
& \left|\frac{\hat u_{m-1}-\hat u_{m-2}\sum_{k\neq j}\lambda_k + \ldots + (-1)^{m-1} \hat u_0 \prod_{k\neq j}\lambda_k}{\prod_{k\neq j}(\lambda_k-\lambda_j)}\,e^{\lambda_jt}\right.\\
& \qquad \left. + \frac{\hat u_{m-1}-\hat u_{m-2}\sum_{k\neq j+1}\lambda_k + \ldots + (-1)^{m-1} \hat u_0 \prod_{k\neq j+1}\lambda_k}{\prod_{k\neq j+1}(\lambda_k-\lambda_j)}\,e^{\lambda_{j+1}t}\right|\\
& \lesssim (1+t\xii)\,e^{-ct}\,\sum_{k=0}^{m-1}|\hat u_k|\,\xii^{-k}.
\end{align*}
If the interlacing of $P_{m-1}(\lambda,\xi')$ and $P_{m}(\lambda,\xi')$ is strict, exception given for the double roots of $P_{m}(\lambda,\xi')$, for any $\xi'\in S^{n-1}$, we immediately derive~\eqref{eq:estQ2regloss}. Otherwise, taking into account that $a_j(\xi')$ may be a simple root of both $P_{m-1}(\lambda,\xi')$ and $P_m(\lambda,\xi')$ for some $\xi'\in S^{n-1}$ and $j=1,\ldots,m$, we proceed as in the proof of Proposition~\ref{prop:regloss}. Considering both scenarios, we derive~\eqref{eq:estQ2loss} with~$\nu\geq1$.
\end{proof}
Theorem~\ref{thm:lowloss} and Propositions~\ref{prop:lossdecay} and~\ref{prop:regloss} may be combined in different ways, according to the interlacing assumptions of $P_{m-2}(\lambda,\xi')$ with $P_{m-1}(\lambda,\xi')$ and of $P_{m-1}(\lambda,\xi')$ with $P_{m}(\lambda,\xi')$, and to the strict or weak hyperbolicity of $P_{m-2}(\partial_t,\partial_x)$ and $P_{m}(\partial_t,\partial_x)$. In the very general case, with no extra assumption with respect to the necessary and sufficient condition for the strict stability, given in Hypothesis~\ref{hyp:Q2}, the proof of Theorem~\ref{thm:verygen} follows by combining estimate in~\eqref{eq:estQ2weakgen} in Theorem~\ref{thm:lowloss} and estimate~\eqref{eq:estQ2loss} with $\nu\geq1$ in Proposition~\ref{prop:regloss}.

An analogous result to Theorem~\ref{thm:asymp} may be obtained to describe the asymptotic behavior of the solution to~\eqref{eq:CPlin}, following the proof of Theorem~\ref{thm:asymp}, under the more general assumptions in this section. For the sake of brevity, we provide such result in the simpler anisotropic case, as in Examples \ref{ex:MGT}, \ref{ex:BC}, \ref{ex:electric}.
\begin{theorem}\label{thm:asympweak}
Assume that Hypothesis~\ref{hyp:Q2} holds, that
\[ u_j\in L^1\cap H^\infty \qquad \text{for $j=0,\ldots,m-1$,} \]
and that $P_{m-j}(\lambda,\xi')=P_{m-j}(\lambda)$ are anisotropic, i.e., independent of~$\xi'\in S^{n-1}$, $j=0,1,2,$. Let
\[ J_1=\{j: \ d_j=b_{j+1}=d_{j+1} \}, \qquad J_2=\{j: \ d_{j-1}<b_j=d_j \ \text{or} \ d_j=b_{j+1}<d_{j+1} \}, \]
and define
\begin{align}
\label{eq:vweak}
\hat v(t,\xi)
    & = M\,\sum_{j\in J_1}\frac{e^{i\xii\,d_jt}}{i^{m-4}\check{P}_{m-2,j,j+1}(d_j)}\,\frac{e^{\xii^2\kappa_-t}-e^{\xii^2\kappa_+t}}{\kappa_+-\kappa_-}\,, \\
\label{eq:wweak}
\hat w(t,\xi)
    & = M\,\sum_{j\in J_2}\frac{e^{i\xii\,d_jt + i\,\xii^3\,\frac{P_m(d_j)}{\check{P}_{m-2,j}(d_j)}\,t + \xii^4\,\frac{P_m(d_j)\,\tilde P_{m-1}(d_j)}{(\check{P}_{m-2,j}(d_j))^2}\,t}}{i^{m-3}\check{P}_{m-2,j}(d_j)}\,,
\end{align}
with $M$ as in~\eqref{eq:moment}, $\tilde P_{m-1}$ as in~\eqref{eq:Ptilde}, and $\kappa_\pm$ as in~\eqref{eq:kappalow}, provided that $\kappa_+\neq \kappa_-$. If $J_2$ is empty and $J_1$ is nonempty, then the solution $u$ to Cauchy problem~\eqref{eq:CPlin} with $Q=Q_2$ satisfies the following estimate
\begin{equation}
\label{eq:Q2asympweak2}
\big\|\partial_t^k \big( u(t,\cdot) - I_{m-2}v(t,\cdot)\big)\big\|_{\dot H^s} = \textit{o}\big((1+t)^{-\frac{n}4-\frac{k+s-(m-2)}2}\big),
\end{equation}
for $n/2+k+s>m-2$. If $J_2$ is nonempty, then the solution $u$ to Cauchy problem~\eqref{eq:CPlin} with $Q=Q_2$ satisfies estimate~\eqref{eq:Q2asympweak2} for $m-2<n/2+k+s<m-1$ and the following estimate
\begin{equation}
\label{eq:Q2asympweak1}
\big\|\partial_t^k \big( u(t,\cdot) - I_{m-3}w(t,\cdot)\big)\big\|_{\dot H^s} = \textit{o}\big((1+t)^{-\frac{n}8-\frac{k+s-(m-3)}4}\big),
\end{equation}
for $n/2+k+s>m-1$. Moreover, \eqref{eq:Q2asympweak1} holds for $n/2+k+s>m-3$ if $J_1$ is empty.
\end{theorem}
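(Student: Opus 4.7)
The plan is to adapt the proof of Theorem~\ref{thm:asymp} by replacing Lemma~\ref{lem:low} with Lemma~\ref{lem:low2weak} to capture the two distinct low-frequency scalings produced by weak interlacing. After Fourier transform, I split into low frequencies $\xii \leq \delta$, where the asymptotic profile of the roots is the only source of polynomial decay, intermediate frequencies $\delta \leq \xii \leq M$, handled exactly as in Theorem~\ref{thm:CPQ2}, and high frequencies $\xii \geq M$, handled via Proposition~\ref{prop:regloss}; both of the latter contribute only exponentially decaying terms (with an acceptable derivative loss absorbed by $H^\infty$). The anisotropic hypothesis that $P_{m-j}(\lambda,\xi')=P_{m-j}(\lambda)$ is what makes $J_1,J_2$ well defined as index sets (independent of $\xi'$) and allows the explicit profiles $\hat v, \hat w$ to be written without angular factors.

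At low frequencies, I classify each $\lambda_j(\xi)$, $j=1,\dots,m-2$, according to the three cases of Lemma~\ref{lem:low2weak}. For $j\notin J_1\cup J_2$, the behavior~\eqref{eq:lambdalow2} gives the same $\xii^2$ scaling as in Theorem~\ref{thm:asymp}, producing an oscillating Gaussian term whose $L^2$ mass is $O(t^{-n/4-(k+s-(m-3))/2-1/2})$ after the $L^{1,1}$-moment cancellation and is therefore subsumed in the $\textit{o}$-remainder. For $j\in J_2$, the expansion~\eqref{eq:asymplows} contributes to $\hat w$: I replace the exponential $e^{\lambda_j t}$ by the dominant profile in the definition~\eqref{eq:wweak} and the coefficient $1/\prod_{k\neq j}(\lambda_j-\lambda_k)$ by its leading term $1/((i\xii)^{m-3}\check P_{m-2,j}(d_j))$. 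For $j\in J_1$, the pair $(\lambda_j,\lambda_{j+1})$ has the expansion~\eqref{eq:asymplowd}; using the divided-difference form from Remark~\ref{rem:multiple} (the last line of~\eqref{eq:urepdoublenear}), the sum of the two contributions is well approximated by the ratio $(e^{\xii^2\kappa_- t}-e^{\xii^2\kappa_+ t})/(\xii^2(\kappa_--\kappa_+))$ multiplied by the value of the numerator at coincidence, which yields the contribution to $\hat v$ in~\eqref{eq:vweak}. In all three cases, the moment condition $M=\hat u_{m-1}(0)+c_{m-1,0}\hat u_{m-2}(0)+c_{m-2,0}\hat u_{m-3}(0)$ is used, via the bound $|\hat u_{m-k}(\xi)-\hat u_{m-k}(0)|\lesssim \xii\|u_{m-k}\|_{L^{1,1}}$, to replace the initial data transforms by $M$ at the leading order; the $L^{1,1}\to L^1$ density argument of \cite[Lemma~3.2]{K00}, already used in the proof of Theorem~\ref{thm:asymp}, then yields the sharp $\textit{o}$-improvement.

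For the final decay rates I compute, analogously to the $L^2$ analysis in the proof of Theorem~\ref{thm:CPQ2}: terms coming from $J_1$ give $\|I_{m-2}(v-v_{\text{approx}})\|_{\dot H^s}$ of order $t^{-n/4-(k+s-(m-2))/2}$ through the change of variable $\eta=\sqrt{t}\xi$ in the Gaussian factor $e^{\xii^2\Re\kappa_\pm t}$; terms coming from $J_2$ give $\|I_{m-3}(w-w_{\text{approx}})\|_{\dot H^s}$ of order $t^{-n/8-(k+s-(m-3))/4}$ through the rescaling $\eta=t^{1/4}\xi$, since the effective exponential damping at low frequency is $e^{\xii^4\cdot c\,t}$. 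Comparing the two rates, when $J_2\neq\emptyset$ the $\xii^4$ scaling gives the worse (slower) decay whenever $n/2+k+s\geq m-1$, so $\hat w$ carries the leading profile and the $\hat v$-contribution is absorbed in the remainder, giving~\eqref{eq:Q2asympweak1}; in the intermediate window $m-2<n/2+k+s<m-1$ the integrability at $\xi=0$ of the $\xii^4$-profile fails while the $\xii^2$-profile integrates, so $\hat v$ dominates and~\eqref{eq:Q2asympweak2} holds; when $J_2=\emptyset$ only the $\xii^2$ scaling is present and~\eqref{eq:Q2asympweak2} is the statement.

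The main obstacle will be the $J_1$ case: the two roots $\lambda_j,\lambda_{j+1}$ coalesce at $\xi=0$ with gap of order $\xii^2$, so each individual coefficient $1/(\lambda_j-\lambda_{j+1})$ in~\eqref{eq:urepgen} blows up as $\xii^{-2}$ and only the combined contribution has the tame form used in~\eqref{eq:vweak}. I must therefore track the next-order terms in $\lambda_j-\lambda_{j+1}=\xii^2(\kappa_+-\kappa_-)+\textit{o}(\xii^2)$, verify that $\kappa_+\neq\kappa_-$ is exactly the non-degeneracy of~\eqref{eq:kappalow} (whose discriminant is strictly negative by the strict interlacing of $P_{m-1}$ and $P_{m-2}$ away from the double roots), and show that the error in replacing the numerator at $\lambda_j$ by the numerator at $id_j$ gains an additional factor of $\xii$, which is what ultimately supplies the $\textit{o}$-improvement in~\eqref{eq:Q2asympweak2}. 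A secondary technical point is the book-keeping of $i^{m-k}$ factors, which arise because $P_{m-j}$ is homogeneous of different parities in $\xi$ for different $j$; I follow the sign-tracking convention already used in the proofs of Lemmas~\ref{lem:low} and~\ref{lem:low2weak}.
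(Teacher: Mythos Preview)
Your proposal is correct and follows exactly the route the paper indicates: the paper omits the proof entirely, stating only that it ``is analogous to the proof of Theorem~\ref{thm:asymp}'', and your outline is precisely that analogy carried out with Lemma~\ref{lem:low2weak} in place of Lemma~\ref{lem:low}, together with the high-frequency input from Proposition~\ref{prop:regloss}. One small correction: you need not argue that the discriminant of~\eqref{eq:kappalow} is strictly negative to secure $\kappa_+\neq\kappa_-$; this is an explicit hypothesis of the theorem (``provided that $\kappa_+\neq\kappa_-$''), and the remark after Lemma~\ref{lem:low2weak} only guarantees $\Re\kappa_\pm<0$, not that the roots are distinct.
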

The proof is analogous to the proof of Theorem~\ref{thm:asymp}, so we omit it.

\begin{example}\label{ex:electricdiss}
We go back to Example~\ref{ex:electric}, but we add a dissipation term $a\partial_tu$ to the system of elastic waves~\eqref{eq:MHD3dplus}:
\begin{equation}\label{eq:MHD3diss}
\begin{cases}
\partial_t^2 u + a \partial_t u - \mu \Delta u - (\mu+\nu) \nabla \div u + \gamma \curl E =0,\\
\partial_t E +\sigma E - \curl H -\gamma\curl \partial_t u=0,\\
\partial_t H + c^2\curl E =0.
\end{cases}
\end{equation}
Proceeding as in Example~\ref{ex:electric}, and defining $\varphi=\partial_tE+\sigma E$ for each component of $E$, $\varphi$ satisfies the fourth order equation
\[ \big( (\partial_t^2+a\partial_t-\mu\Delta)(\partial_t^2+\sigma\partial_t-c^2\Delta)-\gamma^2\partial_t^2\Delta\big) \varphi=0. \]
The equation above is $Q_2(\partial_t,\partial_x)\varphi=0$, where $Q_2=P_4+P_3+P_2$ and
\begin{align*}
P_4(\lambda,\xi)
    & = \lambda^4-(\mu+c^2+\gamma^2)\lambda^2\xii^2+c^2\mu\xii^4, \\
P_3(\lambda,\xi)
    & = (a+\sigma)\lambda^3 - (ac^2+\mu\sigma)\lambda\xii^2, \\
P_2(\lambda,\xi)
    & = a\sigma\lambda^2.
\end{align*}
Now~$P_2(\lambda,\xi')$ has a double root~$0$, the roots of~$P_3(\lambda,\xi')$ are~$b_2=0$ and
\[ b_3=\sqrt{\frac{ac^2+\mu\sigma}{a+\sigma}}, \qquad b_1=-b_3, \]
and the roots of $P_4(\lambda,\xi')$ are
\[ a_{3,4} = \frac1{\sqrt{2}}\,\sqrt{\mu+c^2+\gamma^2\pm \sqrt{(\mu+c^2+\gamma^2)^2-4c^2\mu}}, \quad a_{1,2}=-a_{3,4}. \]
The strict interlacing condition of $P_3(\lambda,\xi')$ and $P_4(\lambda,\xi')$ may be checked by straightforward calculations. Therefore, Theorem~\ref{thm:lowloss}, \eqref{en:lowd} and Proposition~\ref{prop:lossdecay} apply. 
%
%
By Lemma~\ref{lem:low2weak}, due to
\[ \check{P}_{2,1,2}(0,\xi')=a\sigma, \qquad \check{P}_{3,2}(0,\xi')=-(ac^2+\mu\sigma), \qquad P_4(0,\xi')=c^2\mu, \]
we may compute
\[ \lambda_4(0)=-a, \qquad \lambda_3(0)=-\sigma, \qquad \lambda_2(\xi)=-\frac{ac^2}{a+\sigma}\,\xii^2+\textit{o}(\xii^2), \qquad \lambda_1(\xi)=-\frac{\mu\sigma}{a+\sigma}\,\xii^2+\textit{o}(\xii^2),\]
as $\xi\to0$. By Theorem~\ref{thm:asympweak}, if $ac^2\neq \mu\sigma$, the asymptotic profile of $\varphi(t,\cdot)$ is described by $I_2v$, where
\begin{align*}
\mathfrak{F}(I_2v)
    & = \xii^{-2}\hat v(t,\xi) = M\,\frac{a+\sigma}{a\sigma(\mu\sigma-ac^2)}\,\frac{e^{-\frac{ac^2}{a+\sigma}\,\xii^2 t}-e^{-\frac{\mu\sigma}{a+\sigma}\,\xii^2 t}}{\xii^2}\,,
\intertext{if}
M
    & = \int_{\R^3} \big(\partial_t^3\varphi(0,x)+(a+\sigma)\partial_t^2 \varphi(0,x)+a\sigma \partial_t \varphi(0,x)\big)\,dx \neq0.
\end{align*}
Explicitly,
\[ \|\partial_t^k\big( \varphi(t,\cdot)-I_2v(t,\cdot)\big)\|_{\dot H^s} =\textit{o}\big((1+t)^{-\frac34-(k-1)-\frac{s}2}\big),\] 
for~$k\geq1$ or~$s>1/2$.
\end{example}
We also provide an example of the application of Lemma~\ref{lem:low2weak} to the theory of systems of elastic waves with anisotropic dissipation.
\begin{example}\label{ex:anisotropic}
We consider the eigenvalues of a system of elastic waves in $\R^2$,
\[ \partial_t^2 u - \mu \Delta u - (\mu+\nu)\nabla \div u + A\partial_tu =0, \qquad A=\begin{pmatrix}
a_1 & 0 \\
0 & a_2
\end{pmatrix} \]
where $A$ with $a_1>a_2>0$, represents an anisotropic dissipation. The constant~$\mu,\nu$ are the Lam\`e constants and verify $\mu>0$, $\mu+\nu>0$. Let $U=(\xii \hat u, \partial_t \hat u)$. Then
\[ \partial_t U = MU, \quad M=\begin{pmatrix}
0 & \xii \mathrm{I_2}\\
-\xii^{-1}B & -A
\end{pmatrix}, \qquad B=
\begin{pmatrix}
(2\mu+\nu)\xi_1^2+\mu\xi_2^2 & (\mu+\nu)\xi_1\xi_2\\
(\mu+\nu)\xi_1\xi_2 & \mu\xi_1^2+(2\mu+\nu)\xi_2^2 \end{pmatrix}. \]
By straightforward computation,
\[ \det (\lambda-M) = P_4(\lambda,i\xi)+P_3(\lambda,i\xi)+P_2(\lambda,i\xi), \]
where
\begin{align*}
P_4(\lambda,\xi)
    & = (\lambda^2-\mu\xii^2)(\lambda^2-(2\mu+\nu)\xii^2),\\
P_3(\lambda,\xi)
    & = \lambda^3 (a_1+a_2)- \lambda \big( a_1(\mu\xii^2+(\mu+\nu)\xi_2^2) + a_2(\mu\xii^2+(\mu+\nu)\xi_1^2)\big),\\
P_2(\lambda,\xi)
    & = \lambda^2 a_1a_2.
\end{align*}
In particular, $P_4(\partial_t,\partial_x)$ and $P_3(\partial_t,\partial_x)$ are strictly hyperbolic, whereas $P_2(\partial_t,\partial_x)$ is weakly hyperbolic. The interlacing condition holds, since the roots of $P_4(\lambda,\xi)$ are $\pm \xii\sqrt{\mu}$ and $\pm \xii\sqrt{2\mu+\nu}$, whereas the roots of $P_3(\lambda,\xi)$ are $0$ and
\[ \pm \xii\,\sqrt{\mu + (\mu+\nu)\frac{a_1\xi_2^2+a_2\xi_1^2}{(a_1+a_2)\,\xii^2}}\,. \]
Therefore, the eigenvalues $\lambda_j(\xi)$ of~$M$ verify $\Re\lambda_j(\xi)<0$ for any $\xi\neq0$. Due to Lemma~\ref{lem:low2weak}, case~\eqref{en:lowd},
\[ \lambda_{4}(0)=-a_2, \quad \lambda_3(0)=-a_1, \quad \lambda_{1,2}(\xi)= \xii^2\,\kappa_\pm(\xi')+\textit{o}(\xii^2),\]
as $\xi\to0$, where $\kappa_\pm(\xi')$, $\xi'\in S^1$, are the two solutions to
\[ \kappa^2 a_1a_2 + \kappa \big( a_1(\mu+(\mu+\nu)(\xi_2')^2) + a_2(\mu+(\mu+\nu)(\xi_1')^2)\big) + \mu(2\mu+\nu) =0. \]
%
%
We stress that, as expected, in the limit case $a_2=0$, the polynomial $Q(\lambda,i\xi)=P_4(\lambda,i\xi)+P_3(\lambda,i\xi)$ is not strictly stable, since the interlacing of $P_3(\lambda,\xi)$ and $P_4(\lambda,\xi)$ is not strict at $(\pm1,0)$ and at $(0,\pm1)$.
\end{example}
Finally, we present two examples without a physical motivation, just to clarify the possible scenarios of weak interlacing from a mathematical point of view.
\begin{example}
If we replace the viscoelastic damping $-b\Delta\partial_t$ in the MGT equation in Example~\ref{ex:MGT} by a classical damping $b\partial_t$, then we obtain the third order operator $Q=Q_2(\partial_t,\partial_x)$, where
\[ P_3(\lambda,\xi')=\lambda(\lambda^2-c^2), \quad P_2(\lambda,\xi')=\tau^{-1}(\lambda^2-c^2), \quad P_1(\lambda,\xi')=\tau^{-1}b\lambda, \]
to which Lemma~\ref{lem:highweak} applies, and by~\eqref{eq:asymphighs} we derive the asymptotic behaviors
\[ \lambda_{1,3}(\xi) = \pm i c\xii \pm \frac{ib}{2c\tau\xii} - \frac{b}{2c^2\tau^2\xii^2} + \textit{o}(\xii^{-2}),\]
whereas by~\eqref{eq:lambdahighsimple} we get $\lambda_2(\xi)= -\tau^{-1} + \textit{o}(1)$, as $\xii\to\infty$.
\end{example}
\begin{example}
We consider the fourth order operator
\[ Q_2(\partial_t,\partial_x)= \partial_t^2(\partial_t^2-c^2\Delta) + \partial_t(\partial_t^2-\Delta)+\partial_t^2-\Delta,\]
with $c>1$, that is, we fix
\[ P_4(\lambda,\xi')=\lambda^2(\lambda^2-c^2), \quad P_3(\lambda,\xi')=\lambda(\lambda^2-1), \quad P_2(\lambda,\xi')=\lambda^2-1. \]
Then $\lambda_{3,4}(0)=(-1\pm 3i)/2$ and, by Lemma~\ref{lem:low2weak}, we obtain the asymptotic behaviors
\[ \lambda_{1,2}(\xi)= \pm i\xii \mp i\,\frac{c^2-1}{2} \, \xii^3 - \frac{c^2-1}{2} \,\xii^4 + \textit{o}(\xii^4), \]
as $\xi\to0$. Let $\kappa_\pm=(-1\pm i \sqrt{4c^2-1})/(2c^2)$ be the two solutions to~\eqref{eq:kappahigh}, that is, $c^2\kappa^2 + \kappa +1 =0$. By Lemma~\ref{lem:highweak}, we derive the asymptotic behaviors
\[ \lambda_{1,2}(\xi)= \pm ic\xii - \frac{c^2-1}{2c^2} + \textit{o}(1), \qquad \lambda_{3,4}(\xi)=\frac{-1\pm i \sqrt{4c^2-1}}{2c^2} + \textit{o}(1), \]
as $\xii\to\infty$. Applying Theorem~\ref{thm:asympweak}, we get the asymptotic behavior $I_1w$ for $u$, where
\begin{align*}
\mathfrak{F}(I_1w)
    & =\xii^{-1}\hat w(t,\xi)= M\,\frac{e^{i\xii t-i\,\frac{c^2-1}{2} \, \xii^3 t- \frac{c^2-1}{2} \,\xii^4t}-e^{-i\xii t+i\,\frac{c^2-1}{2} \, \xii^3t - \frac{c^2-1}{2} \,\xii^4t}}{2i\xii} \\
    & = M\,\xii^{-1}\sin \Big(\xii t- \frac{c^2-1}{2}\xii^3t\Big)\,e^{-\frac{c^2-1}{2} \,\xii^4t}\,.
\end{align*}
Explicitly,
\[ \|\partial_t^k\big( u(t,\cdot)-I_1w(t,\cdot)\big)\|_{\dot H^s} =\textit{o}\big((1+t)^{-\frac{n}8-\frac{k+s-1}4}\big),\]
for~$n/2+k+s>1$.
\end{example}

\section{Application of Theorem~\ref{thm:CPQ2} to semilinear problems}\label{sec:nonlinear}

The estimates in Theorem~\ref{thm:CPQ2} may be applied to study different nonlinear problems. In this section, we present an application to the problem perturbed by a power nonlinearity. We consider
\begin{equation}\label{eq:CPnl}
\begin{cases}
Q(\partial_t,\partial_x) u =f(D^\alpha u), \quad t\geq 0, x\in\R^n,\\
\partial_t^ju(0,x)=u_j(x),\quad j=0,\ldots,m-1,
\end{cases}
\end{equation}
where $D=(\nabla,\partial_t)$, $\nu=|\alpha|=0,\ldots,m-2$, and $f$ verifies the following local Lipschitz-type condition
\begin{equation}\label{eq:f}
|f(s_2)-f(s_1)|\leq C\,\big(|s_2|+|s_1|\big)^{p-1}\,|s_2-s_1|,
\end{equation}
for some $p>2$. For instance, $f=\pm |\partial_t^{\nu}u|^p$. We also define $\iota=0$ if we consider $Q=Q_1$, and $\iota=1$ if we consider $Q=Q_2$, in~\eqref{eq:CPnl}. Let the space dimension~$n$ be such that
\begin{equation}\label{eq:dimension}
(m-2-\iota-\nu)<n\leq 2(m-1-\iota-\nu).
\end{equation}
Then, assuming small initial data $u_j \in L^1\cap H^{m-1-j}$, $j=0,\ldots,m-1$, global small data solutions to~\eqref{eq:CPnl} exist for
\begin{equation}\label{eq:pcrit}
p> \bar p(n) = 1 + \frac{m-\iota-\nu}{n-(m-2-\iota-\nu)}\,.
\end{equation}
We notice that~\eqref{eq:dimension} and~\eqref{eq:pcrit} imply that $p>2$.
\begin{corollary}\label{cor:nl}
Let $Q=Q_1$ or $Q=Q_2$, as in Theorem~\ref{thm:CPQ2}, and fix $\iota=0$ if $Q=Q_1$ or $\iota=1$ if $Q=Q_2$. Assume that the space dimension~$n$ verifies~\eqref{eq:dimension} and that $p$ in~\eqref{eq:f} verifies~\eqref{eq:pcrit}. Then there exists $\varepsilon>0$ such that if
\[ u_j \in L^1\cap H^{m-1-j}, \quad j=0,\ldots,m-1, \qquad \mathcal A=\sum_{j=0}^{m-1} \big(\|u_j\|_{L^1}+\|u_j\|_{H^{m-1-j}}\big) \leq \varepsilon, \]
there exists a unique solution $u\in X=\cap_{j=0}^{m-1} \mathcal C^j([0,\infty),H^{m-1-j})$ to Cauchy problem~\eqref{eq:CPnl}. Moreover, the solution verifies the decay estimate
\begin{equation}
\label{eq:estnl}\begin{split}
\|\partial_t^ku(t,\cdot)\|_{\dot H^s}
    & \leq C\,(1+t)^{-\frac{n}4-\frac{k+s-(m-2-\iota)}2}\,\mathcal A,
\end{split}\end{equation}
for any $k+s\leq m-1$ such that $n/2+k+s>m-2-\iota$.
\end{corollary}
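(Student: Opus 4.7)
\textbf{Proof plan for Corollary~\ref{cor:nl}.} The argument is a Banach fixed point scheme on a weighted Bochner space, using the linear decay estimates of Theorem~\ref{thm:CPQ2} as the only analytical input. I would work in
\[
Y = \bigl\{u\in X : \|u\|_Y<\infty\bigr\}, \qquad
\|u\|_Y = \sup_{t\geq 0}\sum_{(k,s)\in\Lambda} (1+t)^{\frac{n}{4}+\frac{k+s-(m-2-\iota)}{2}}\,\|\partial_t^k u(t,\cdot)\|_{\dot H^s},
\]
where $\Lambda$ is a carefully chosen finite set of admissible pairs with $0\leq k+s\leq m-1$ and $n/2+k+s>m-2-\iota$. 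Theorem~\ref{thm:CPQ2} with $q=1$ applied to the linear part immediately gives $\|u^{\mathrm{lin}}\|_Y\leq C_0\mathcal A$.

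Next, I would set $\Phi(u)=u^{\mathrm{lin}}+N[u]$ with the Duhamel term
\[
N[u](t,\cdot)=\int_0^t E_{m-1}(t-\tau,\cdot)\ast_x f(D^\alpha u(\tau,\cdot))\,d\tau,
\]
where $E_{m-1}$ is the fundamental solution of $Q$ corresponding to data $u_0=\cdots=u_{m-2}=0$, $u_{m-1}=\delta_0$. Applying Theorem~\ref{thm:CPQ2} to each time slice with source datum $f(D^\alpha u(\tau))$ at the top order yields, for $(k,s)\in\Lambda$,
\[
\|\partial_t^k N[u](t)\|_{\dot H^s}\lesssim \int_0^t \bigl[(1+t-\tau)^{-\eta_1}\|f(D^\alpha u(\tau))\|_{L^1}+e^{-c(t-\tau)}\|f(D^\alpha u(\tau))\|_{H^{k+s-(m-1)}}\bigr]\,d\tau,
\]
with $\eta_1=\frac{n}{4}+\frac{k+s-(m-2-\iota)}{2}$. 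The nonlinearity is controlled via \eqref{eq:f}: $\|f(D^\alpha u)\|_{L^q}\lesssim \|D^\alpha u\|_{L^{qp}}^p$ for $q\in\{1,2\}$, combined with Gagliardo--Nirenberg interpolation
\[
\|D^\alpha u(\tau)\|_{L^{qp}}\lesssim \|u(\tau)\|_{\dot H^{\sigma_1}}^{1-\theta}\,\|u(\tau)\|_{\dot H^{\sigma_2}}^\theta,
\]
with $\sigma_1,\sigma_2\leq m-1$ admissible in $\Lambda$. The upper bound $n\leq 2(m-1-\iota-\nu)$ in \eqref{eq:dimension} guarantees the existence of $\theta\in[0,1]$ realizing $\nu+n(1/2-1/(qp))=\sigma_1+\theta(\sigma_2-\sigma_1)\leq m-1$, so the interpolation is legal up to the maximal regularity available.

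Carrying the decay weights from $Y$ through the interpolation gives $\|f(D^\alpha u(\tau))\|_{L^q}\lesssim (1+\tau)^{-p\gamma_q}\|u\|_Y^p$ for explicit exponents $\gamma_q>0$; splitting the convolution integral over $[0,t/2]\cup[t/2,t]$ shows that $\int_0^t (1+t-\tau)^{-\eta_1}(1+\tau)^{-p\gamma_1}\,d\tau\lesssim (1+t)^{-\eta_1}$ exactly when $p\gamma_1>1$. The algebraic simplification of this threshold yields $p=\bar p(n)$, so assumption \eqref{eq:pcrit} provides strict inequality $p\gamma_1>1$ and hence $\|N[u]\|_Y\lesssim \|u\|_Y^p$; the lower bound $n>m-2-\iota-\nu$ in \eqref{eq:dimension} is precisely what makes $\bar p(n)$ finite. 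An entirely parallel computation using \eqref{eq:f} applied to differences gives the Lipschitz bound
\[
\|N[u]-N[v]\|_Y\lesssim (\|u\|_Y+\|v\|_Y)^{p-1}\,\|u-v\|_Y.
\]

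A standard contraction argument on the ball $\{u\in Y:\|u\|_Y\leq 2C_0\mathcal A\}$, for $\mathcal A\leq\varepsilon$ sufficiently small, yields a unique fixed point, which is the desired global solution; estimate \eqref{eq:estnl} is read off the $Y$ norm. The main obstacle, in my view, is the bookkeeping that matches the exponents from Theorem~\ref{thm:CPQ2} with the Gagliardo--Nirenberg interpolation indices so that both low-frequency decay ($q=1$) and the high-frequency/regularity term (encoded by the $H^{k+s-(m-1)}$ piece of Theorem~\ref{thm:CPQ2}) are simultaneously closed inside $\Lambda$. The second delicate point is handling the borderline case $n/2+k+s=m-2-\iota$, which is excluded from the decay estimate of Theorem~\ref{thm:CPQ2}; choosing $\Lambda$ strictly above this threshold (possible under \eqref{eq:dimension} because $n$ is strictly larger than $m-2-\iota-\nu$) avoids this degeneracy.
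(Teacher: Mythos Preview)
Your plan is the same contraction-mapping scheme as the paper: Duhamel representation, Theorem~\ref{thm:CPQ2} for the propagator, Sobolev/Gagliardo--Nirenberg control of $\|f(D^\alpha u)\|_{L^q}$, and a split of the time integral at $t/2$. One step, however, is stated incorrectly and would not close as written. The bound
\[
\int_0^t (1+t-\tau)^{-\eta_1}(1+\tau)^{-p\gamma_1}\,d\tau\lesssim (1+t)^{-\eta_1}
\]
does \emph{not} follow from $p\gamma_1>1$ alone. On $[t/2,t]$ the integral is $\lesssim (1+t)^{-p\gamma_1}$ when $\eta_1>1$, and there is no reason for $p\gamma_1\geq\eta_1$: at the top regularity $k+s=m-1$ one has $\eta_1=\tfrac{n}{4}+\tfrac{1+\iota}{2}$, while for $p$ just above $\bar p(n)$ the quantity $p\gamma_1$ is only slightly larger than~$1$. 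So with $q=1$ on both halves you would lose decay at the highest derivatives and the $Y$-norm would not close.

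The paper repairs this by applying Theorem~\ref{thm:CPQ2} on $[t/2,t]$ with a \emph{variable} $q\in[1,2]$, chosen so that the linear exponent satisfies
\[
0<\frac{n}{2}\Big(\frac1q-\frac12\Big)+\frac{k+s-(m-2-\iota)}{2}\leq 1;
\]
the $[t/2,t]$ integral is then at most logarithmic, and the remaining algebra with $\|f\|_{L^q}$ reduces exactly to $p>\bar p(n)$. Your two-point choice $q\in\{1,2\}$ would also suffice, but then you must explicitly use $q=2$ on $[t/2,t]$ (the linear exponent there is $(k+s-(m-2-\iota))/2\leq(1+\iota)/2\leq1$) and verify $p\gamma_2>1+n/4$; since $p\gamma_2=p\gamma_1+n/4$, this is again $p\gamma_1>1$. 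Either way, the $[t/2,t]$ piece needs a different $q$ than the one you wrote down.
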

\begin{proof}
The proof is very classical. The local existence of the solution follows by standard arguments and Sobolev embeddings, so we only prove the existence and uniqueness of the global solution. We prove Corollary~\ref{cor:nl} for $D^\alpha=\partial_t^\nu$, being the proof in the other cases analogous. For a given small $\delta>0$, we fix $\kappa=\delta+m-2-\iota-n/2$ and we define the norm
\begin{align*}
\|v\|_X
    & = \sup_{t\geq0} (1+t)^{\frac{n}4+\frac{\kappa-(m-2-\iota)}2}\,\sum_{k+s=\kappa} \|\partial_t^ku(t,\cdot)\|_{\dot H^s}\\
    & \qquad + \sup_{t\geq0} \sum_{k+s=[\kappa]+1}^{m-1} (1+t)^{\frac{n}4+\frac{k+s-(m-2-\iota)}2}\,\|\partial_t^ku(t,\cdot)\|_{\dot H^s},
\end{align*}
where $[\kappa]=\max\{j\in\N: \ j\leq \kappa\}$. In particular, we choose sufficiently small $\delta>0$ such that $\kappa\leq \nu+n(1/2-1/p)$. We stress that $m-2-\iota-n/2<\nu+n(1/2-1/p)$, thanks to~\eqref{eq:pcrit}.

By Theorem~\ref{thm:CPQ2}, the solution $\bar u$ to~\eqref{eq:CPlin} verifies the estimate $\|\bar u\|_X \leq C_1\mathcal A$, for some $C_1>0$. Moreover, if $\|v\|_X,\|w\|_X<\infty$, then, by~\eqref{eq:f} and H\"older's inequality,
\begin{equation}\label{eq:estFDv}\begin{split}
\|(f(\partial_t^\nu v)-f(\partial_t^\nu w))(s,\cdot)\|_{L^q}
    & \leq C\,\|\partial_t^\nu(v-w)(s,\cdot)\|_{L^{pq}}\big(\|\partial_t^\nu v(s,\cdot)\|_{L^{pq}}^{p-1}+\|\partial_t^\nu w(s,\cdot)\|_{L^{pq}}^{p-1}\big)\\
    & \lesssim (1+s)^{-\frac{n}{2q}\,(p-1)-\frac{\nu-(m-2-\iota)}2\,p}\,\|v-w\|_X\,\big(\|v\|_{X}^{p-1}+\|w\|_{X}^{p-1}\big),
\end{split}\end{equation}
for any $q\geq1$, where we used $\|\cdot\|_{L^{pq}}\lesssim \|\cdot\|_{\dot H^{n\left(\frac12-\frac1{qp}\right)}}$ and the definition of $\|\cdot\|_X$.

We define the integral operator
\[ Fv(t,\cdot) = \int_0^t K(t-s)\ast f(\partial_t^\nu v)(s,\cdot)\,ds, \]
where $K$ denotes the fundamental solution to~\eqref{eq:CPlin}, i.e., $K(t)\ast u_{m-1}$ solves~\eqref{eq:CPlin} with $u_0=\ldots=u_{m-2}=0$. If we prove that
\begin{equation}\label{eq:Fest}
\|Fv-Fw\|_X \leq C_2\,\|v-w\|_X\,\big(\|u\|_{X}^{p-1}+\|w\|_{X}^{p-1}\big),\quad \text{for any $v,w\in X$,}
\end{equation}
then we get the existence and uniqueness of the solution to~\eqref{eq:CPnl}, by Banach's contraction mapping theorem. Indeed, let~$R=2C_1\mathcal A$. Assuming $\mathcal A\leq\varepsilon$, it follows~$CR^{p-1}<1/2$ for sufficiently small $\varepsilon$. Then~$\|\bar u\|_X\leq R/2$ and~$F$ is a contraction on~$X_R= \{u\in X: \ \|u\|_{X}\leq R\}$. The solution~$u$ to~\eqref{eq:CPnl} is the unique fixed point for~$\bar u(t,x)+Fu(t,x)$. Moreover, $\|u\|_X\leq R= 2C_1\,\mathcal A$, so that~\eqref{eq:estnl} follows for any $k+s\leq m-1$ such that $n/2+k+s\geq\delta+m-2-\iota$. Since $\delta>0$ is arbitrarily small, we may obtain~\eqref{eq:estnl} for $n/2+k+s>m-2-\iota$.

To prove~\eqref{eq:Fest}, we apply Theorem~\ref{thm:CPQ2} to $K(t-s)\ast f(\partial_t^\nu v)(s,\cdot)$, with $q=1$ for $s\in[0,t/2]$ and with $q\in[1,2]$ such that
\[ 0<\frac{n}2\left(\frac1q-\frac12\right)+\frac{k+s-(m-2-\iota)}2 \leq 1, \]
for $s\in[t/2,t]$. Using~\eqref{eq:estFDv}, we obtain, for $k+s\geq\kappa$:
\begin{align*}
& \|\partial_t^k(Fv-Fw)(t,\cdot)\|_{\dot H^s} \lesssim (I_1(t)+I_2(t))\,\|v-w\|_X\,\big(\|u\|_{X}^{p-1}+\|w\|_{X}^{p-1}\big), \quad \text{where} \\
& I_1  = \int_0^{t/2} (1+t-s)^{-\frac{n}4-\frac{k+s-(m-2-\iota)}2}\,(1+s)^{-\frac{n}{2}\,(p-1)-\frac{\nu-(m-2-\iota)}2\,p}\,ds\\
& I_2  = \int_{t/2}^t (1+t-s)^{-\frac{n}2\left(\frac1q-\frac12\right)-\frac{k+s-(m-2-\iota)}2}\,(1+s)^{-\frac{n}{2q}\,(p-1)-\frac{\nu-(m-2-\iota)}2\,p}\,ds.
\end{align*}
Noticing that
\[ -\frac{n}{2}\,(p-1)-\frac{\nu-(m-2-\iota)}2\,p<-1,\]
if, and only if, \eqref{eq:pcrit} holds, we may estimate
\begin{align*}
I_1
    & \lesssim (1+t)^{-\frac{n}4-\frac{k+s-(m-2-\iota)}2}\,\int_0^{t/2} (1+s)^{-\frac{n}{2}\,(p-1)-\frac{\nu-(m-2-\iota)}2\,p}\,ds \\
    & \lesssim (1+t)^{-\frac{n}4-\frac{k+s-(m-2-\iota)}2}\,,\\
I_2
    & \lesssim (1+t)^{-\frac{n}{2q}\,(p-1)-\frac{\nu-(m-2-\iota)}2\,p}\,\int_{t/2}^t (1+t-s)^{-\frac{n}2\left(\frac1q-\frac12\right)-\frac{k+s-(m-2-\iota)}2}\,ds\\
    & \lesssim (1+t)^{-\frac{n}{2q}\,(p-1)-\frac{\nu-(m-2-\iota)}2\,p+1-\frac{n}2\left(\frac1q-\frac12\right)-\frac{k+s-(m-2-\iota)}2}\,\log(1+t)\\
    & \leq (1+t)^{-\frac{n}4-\frac{k+s-(m-2-\iota)}2}\,.
\end{align*}
Since we proved~\eqref{eq:Fest}, this concludes the proof.
\end{proof}
%
We may prove the nonexistence of nontrivial global-in-time solutions to~\eqref{eq:CPnl} when~$f=|u|^p$ if $M\geq0$ and $f=-|u|^p$ if $M\leq0$, for any~$p\in(1,\bar p(n)]$, if we add the assumption that $P_{m-1-\iota}(0,\xi)=0$. Indeed, we may prove that even $L^p_\lloc$ solutions do not exist globally-in-time. We say that $u\in L^p_\lloc([0,\infty)\times\R^n)$ is a global-in-time weak solution to~\eqref{eq:CPnl} with~$f=\pm |u|^p$ and $u_j\in L^1_\lloc(\R^n)$, $j=0,\ldots,m-1$, if %
for any $\psi\in\mathcal C_c^\infty([0,\infty))$, with $\psi(t)=1$ in a neighborhood of the origin, and for any $\varphi\in\mathcal C_c^\infty(\R^n)$, the following integral equality holds:
\begin{equation}\label{eq:weakblow}\begin{split}
\pm \int_0^\infty \int_{\R^n} |u(t,x)|^p\,\psi(t)\,\varphi(x)\,dxdt
    & = \int_0^\infty\int_{\R^n} u(t,x)\,Q(-\partial_t,-\partial_x)\big(\psi(t)\,\varphi(x)\big)\,dxdt \\
    & \qquad - \sum_{k=1}^{m} \sum_{k+|\alpha|=m-1-\iota}^m c_{k,\alpha} \int_{\R^n} u_{k-1}(x)\,(-\partial_x)^\alpha\varphi(x)\,dx\,.
\end{split}\end{equation}
Integrating by parts, it is easy to prove that smooth solutions are weak solutions.
\begin{proposition}\label{prop:critical}
Let $Q=Q_1$ or $Q=Q_2$, as in Theorem~\ref{thm:CPQ2}, and fix $\iota=0$ if $Q=Q_1$ or $\iota=1$ if $Q=Q_2$. Assume that $P_{m-1-\iota}(0,\xi)=0$. Assume that $u_j\in L^1(\R^n)$ for any $j=0,\ldots,m-1$. 
Let~$n\leq m-2-\iota$ and $p>1$, or~$1<p\leq \bar p(n)$, where $\bar p(n)=1 + (m-\iota)/(n-(m-2-\iota))$ as in~\eqref{eq:pcrit}. Then there is no global-in-time nontrivial weak solution to~\eqref{eq:CPnl}, in the sense of~\eqref{eq:weakblow}, with $f=|u|^p$ if $M\geq0$ and with $f=-|u|^p$, if $M\leq0$, where $M$ is as in~\eqref{eq:moment}.
\end{proposition}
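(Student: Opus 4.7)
The plan is to apply the Mitidieri--Pohozaev test function method in the parabolic scaling dictated by the Fujita-type critical exponent, which one checks equals $\bar p(n)=(n+2)/(n-(m-2-\iota))$. Fix nonnegative cutoffs $\psi_0,\varphi_0\in\mathcal C_c^\infty$, equal to $1$ on $[0,1]$ and supported in $[0,2]$, and set $\eta_R(t,x)=\psi_0(t/R^2)\,\varphi_0(|x|/R)$. Inserting $\eta_R$ in the weak formulation \eqref{eq:weakblow} and letting $R\to\infty$, every summand of the boundary term with $|\alpha|\geq 1$ vanishes by dominated convergence (since $u_j\in L^1$ and $\partial_x^\alpha\varphi_R\to 0$ uniformly), and the surviving $|\alpha|=0$ contributions, running over $k=m-1-\iota,\ldots,m$, reassemble exactly into the quantity $M$ of \eqref{eq:moment}. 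This yields the identity
\[
\pm\int_0^\infty\!\!\int_{\R^n}|u|^p\,\eta_R\,dx\,dt \;+\; M \;+\; o(1) \;=\; \int_0^\infty\!\!\int_{\R^n} u\,Q(-\partial_t,-\partial_x)\eta_R\,dx\,dt.
\]

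Next I would carry out the scaling analysis of $Q(-\partial_t,-\partial_x)\eta_R$. Each summand $c_{k,\alpha}\partial_t^k\partial_x^\alpha\eta_R$ is bounded by $R^{-(2k+|\alpha|)}$ and supported in the annulus $A_R=\supp(\nabla_{t,x}\eta_R)$ of space--time volume $\lesssim R^{n+2}$. Minimizing $2k+|\alpha|$ subject to $k+|\alpha|\in\{m-1-\iota,\ldots,m\}$ would nominally give $m-1-\iota$ (attained at $k=0$, $|\alpha|=m-1-\iota$); but the hypothesis $P_{m-1-\iota}(0,\xi)=0$ kills precisely those coefficients, so the true minimum is $m-\iota$. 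Hölder's inequality then produces
\[
\Bigl|\int u\,Q(-\partial_t,-\partial_x)\eta_R\Bigr| \;\lesssim\; R^{\delta}\,\Bigl(\int_{A_R}|u|^p\,\eta_R\Bigr)^{1/p},\qquad \delta=\frac{n+2}{p'}-(m-\iota),
\]
and a direct computation gives $\delta\leq 0$ iff $p\leq\bar p(n)$, with strict inequality for $p<\bar p(n)$; moreover $\delta<0$ for every $p>1$ as soon as $n\leq m-2-\iota$.

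For the strictly subcritical range $\delta<0$, Young's inequality $ab\leq\tfrac12 a^p+C_p b^{p'}$ combined with the two displays above yields
\[
\tfrac12\int|u|^p\,\eta_R\;+\;|M|\;\leq\; C\,R^{\delta p'}\;+\;o(1),
\]
where the sign conventions in the Proposition allow us to rewrite the $M$--term as $|M|\geq 0$ on the left. Letting $R\to\infty$ forces $\int_0^\infty\!\!\int_{\R^n}|u|^p\,dx\,dt+|M|\leq 0$, contradicting either $M\neq 0$ or the nontriviality of $u$ (when $M=0$, we are forced to $u\equiv 0$).

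The main obstacle is the critical case $p=\bar p(n)$, where $\delta=0$ and letting $R\to\infty$ in the estimate above is useless. I would handle it with the standard Zhang--Kato two-step argument. First, the display with $\delta=0$ produces the uniform bound $\int|u|^p\,\eta_R\leq C$, whence monotone convergence gives $\int_0^\infty\!\!\int_{\R^n}|u|^p\,dx\,dt<\infty$. Second, I apply Hölder to the identity \emph{without} Young: since $R^\delta=1$ but the factor $\bigl(\int_{A_R}|u|^p\,\eta_R\bigr)^{1/p}$ tends to zero as $R\to\infty$ by absolute continuity of the integral on the escaping annulus $A_R$, the right-hand side vanishes in the limit, and we again conclude $\int|u|^p+|M|\leq 0$, delivering the contradiction.
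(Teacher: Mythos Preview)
Your approach is essentially the paper's: test functions in the parabolic scaling $t\sim R^2$, $x\sim R$, identification of the minimal weight $m-\iota$ via the hypothesis $P_{m-1-\iota}(0,\xi)=0$, and the two-step (uniform bound, then absolute continuity on the escaping annulus) handling of the critical exponent. One point that, as written, would actually fail: your weighted H\"older bound
\[
\Bigl|\int u\,Q(-\partial_t,-\partial_x)\eta_R\Bigr|\lesssim R^\delta\Bigl(\int_{A_R}|u|^p\,\eta_R\Bigr)^{1/p}
\]
requires the dual factor $\int_{A_R}|Q\eta_R|^{p'}\eta_R^{-p'/p}$ to be finite, but with a plain cutoff $\eta_R=\psi_0(t/R^2)\varphi_0(|x|/R)$ the weight $\eta_R^{-p'/p}$ blows up at $\partial\,\supp\eta_R$ and this integral diverges. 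The paper cures this by taking the test function to be a high power $(\Psi)^\ell$ with $\ell\geq mp'$, so that $|Q\eta_R|^{p'}\lesssim \eta_R^{p'/p}$; you need the same device (or to drop the weight $\eta_R$ on the right and argue a bit differently in the absorption step).
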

With the notation in~\cite{DL03}, the assumption $P_{m-1-\iota}(0,\xi)=0$ guarantees that the operator $Q(\partial_t,\partial_x)$ is a sum of quasi-homogeneous operators of type $(h_i,2,1)$, with $\min h_i=m-\iota$, and the proof essentially follows by~\cite[Theorem 4.2]{DL03}, thanks to the assumption on the moment condition. However, for the ease of reading, we provide a self-contained proof of Proposition~\ref{prop:critical}.
\begin{proof}
Assume, by contradiction, that $u$ is a global-in-time nontrivial weak solution. Let $\Psi\in\mathcal C_c^\infty([0,\infty))$, nonincreasing, with $\psi(t)=1$ in~$[0,1]$. For a given integer $\ell\geq mp'$, where $p'=p/(p-1)$, we set $\psi(t)=(\Psi(t))^\ell$ and $\varphi(x)=\psi(|x|)$, then we define $\psi_R(t)=\psi(R^{-2}t)$ and $\varphi_R(x)=\varphi(R^{-1}x)$ for $R\gg1$. If we put
\begin{align*}
I_R
    & =\int_0^\infty \int_{\R^n} |u(t,x)|^p\,\psi_R(t)\,\varphi_R(x)\,dxdt,\\
I_R^*
    & =\int_{\Omega_R^*} |u(t,x)|^p\,\psi_R(t)\,\varphi_R(x)\,dxdt,
\end{align*}
where $\Omega_R^*=[0,\infty)\times\R^n\setminus \big([0,R]\times B_R\big)$, by H\"older and Young inequality, we derive
\begin{align*}
I_R
    & \leq \frac12 I_R^* + C\,\int_0^\infty\int_{\R^n} |Q(-\partial_t,-\partial_x)\big(\psi_R(t)\,\varphi_R(x)\big)|^{p'}\,\big(\psi_R(t)\,\varphi_R(x)\big)^{-\frac{p'}{p}}\,dxdt \\
    & \qquad \mp \sum_{k=1}^{m} \sum_{k+|\alpha|=m-1-\iota}^m c_{k,\alpha} \int_{\R^n} u_{k-1}(x)\,(-\partial_x)^\alpha\varphi_R(x)\,dx\,.
\end{align*}
We stress that the assumption $\ell\geq mp'$ is here used to obtain that
\[ |Q(-\partial_t,-\partial_x)\big(\psi(t)\,\varphi(x)\big)|^p \leq C\,\psi(t)\,\varphi(x), \]
whereas the fact that $I_R^*$ appears, instead of $I_R$, after H\"older and Young inequality, is related to the fact that $Q(-\partial_t,-\partial_x)\big(\psi(t)\,\varphi(x)\big)=0$ in $[0,R]\times B_R$.

Thanks to the homogeneity of the monomials in $Q(\partial_t,\partial_x)$, we easily get:
\begin{align*}
& \int_0^\infty\int_{\R^n} |Q(-\partial_t,-\partial_x)\big(\psi_R(t)\,\varphi_R(x)\big)|^{p'}\,\big(\psi_R(t)\,\varphi_R(x)\big)^{-\frac{p'}{p}}\,dxdt\\
& \qquad \leq C\,R^{-(m-\iota)p'+n+2} \leq C,
\end{align*}
with $C$ independent on~$R$, thanks to the assumption $p\leq \bar p(n)$, that is, $(m-\iota)p'\leq n+2$. On the other hand,
\[ \lim_{R\to\infty} \sum_{k=1}^{m} \sum_{k+|\alpha|=m-1-\iota}^m c_{k,\alpha} \int_{\R^n} u_{k-1}(x)\,(-\partial_x)^\alpha\varphi_R(x)\,dx = M, \]
by Lebesgue dominated convergence theorem, since $\varphi_R(x)\to1$ and $(-\partial_x)^\alpha\varphi_R(x)\to0$ when $\alpha\neq0$, for any $x\in\R^n$, as $R\to\infty$. As a consequence, $I_R$ is bounded, so that, by Beppo-Levi monotone convergence theorem,
\[ \int_0^\infty \int_{\R^n} |u(t,x)|^p\,dxdt = \lim_{R\to\infty} I_R\leq C, \]
that is, $u\in L^p([0,\infty)\times \R^n)$. As a consequence, by Lebesgue dominated convergence theorem, $I_R^*\to0$ as $R\to\infty$, so that we obtain the inequality
\[ \int_0^\infty \int_{\R^n} |u(t,x)|^p\,dxdt \leq \mp M. \]
The contradiction follows, since $\mp M\leq0$, so that $u\equiv0$.
\end{proof}
\begin{remark}
The results in this section may be generalized in several ways. When $p<\bar p(n)$, lifespan estimates for the solution to~\eqref{eq:CPnl} may be easily obtained. By assuming small initial data $u_j \in L^q\cap H^{m-1-j}$, for some $q\in(1,2]$, the critical exponent $\bar p(n)$ is replaced by $\bar p(n/q)$; moreover, in this case, global small data solutions exist for critical and supercritical powers $p\geq\bar p(n/q)$, and the nonexistence result in Proposition~\ref{prop:critical} may be extended to all subcritical powers~$1<p<\bar p(n/q)$, under a suitable sign assumption on the initial data. 
We omit the details for brevity, addressing the reader to~\cite{DAE17NA} and the reference therein. Possibly, critical nonlinearities may be investigated as done in~\cite{EGR} for the damped wave equation.
\end{remark}

%

%

\end{document}